\theoremstyle{definition}
\newtheorem{theorem}{Theorem}[section]
\newtheorem{proposition}[theorem]{Proposition}
\newtheorem{lemma}[theorem]{Lemma}
\newtheorem{remark}[theorem]{Remark}
\newtheorem{definition}[theorem]{Definition}
\newtheorem{example}[theorem]{Example}
\begin{document}


\title{\normalsize\bf POLYNOMIAL OF AN ORIENTED SURFACE-LINK DIAGRAM VIA QUANTUM $A_2$ INVARIANT}

\author{\small 
YEWON JOUNG
\smallskip\\
{\small\it 
Department of Mathematics, Pusan National University, 
}\\ 
{\small\it 
Busan 46241, Korea
}\\
{\small\it 
yewon112@pusan.ac.kr
}
\smallskip\\
{\small SEIICHI KAMADA}
\smallskip\\
{\small\it 
Department of Mathematics, Osaka City University,
}\\ 
{\small\it Osaka 558-8585, Japan}\\
{\small\it skamada@sci.osaka-cu.ac.jp}
\smallskip\\
{\small AKIO KAWAUCHI}
\smallskip\\
{\small\it 
Osaka City University Advanced Mathematical Institute, Osaka City University
}\\ 
{\small\it Osaka 558-8585, Japan}\\
{\small\it kawauchi@sci.osaka-cu.ac.jp}
\smallskip\\
and
\smallskip\\
{\small SANG YOUL LEE}
\smallskip\\
{\small\it 
Department of Mathematics, Pusan National University,
}\\ 
{\small\it Busan 46241, Korea}\\
{\small\it sangyoul@pusan.ac.kr}}

\renewcommand\leftmark{\centerline{\footnotesize 
Y. Joung, S. Kamada, A. Kawauchi \& S. Y. Lee}}
\renewcommand\rightmark{\centerline{\footnotesize 
Polynomial of an oriented surface-link diagram via quantum $A_2$ invariant}}

\maketitle

\begin{abstract}
It is known that every surface-link can be presented by a marked graph
diagram, and such a diagram presentation is unique up to moves called
Yoshikawa moves. G. Kuperberg introduced a regular isotopy invariant,
called the quantum $A_2$ invariant, for tangled trivalent graph
diagrams. In this paper, a polynomial for a marked graph diagram is
defined by use of the quantum $A_2$ invariant and it is studied
how the polynomial changes under Yoshikawa moves. The notion of a
ribbon marked graph is introduced to show that this polynomial
is useful for an invariant of a ribbon 2-knot. 
\end{abstract}

\noindent{\it Mathematics Subject Classification 2000}: 57Q45; 57M25.

\noindent{\it Key words and phrases}:  marked graph diagram; ribbon marked graph; surface-link; quantum $A_2$ invariant; tangled trivalent graph. 



\section{Introduction}
\label{intro}

A {\it marked graph diagram} (or {\it ch-diagram}) is a link diagram possibly with some $4$-valent vertices equipped with markers; \xy (-4,4);(4,-4) **@{-}, 
(4,4);(-4,-4) **@{-},  
(3,-0.2);(-3,-0.2) **@{-},
(3,0);(-3,0) **@{-}, 
(3,0.2);(-3,0.2) **@{-}, 
\endxy.
An {\it oriented marked graph diagram} is a marked graph diagram in which every edge has an orientation such that each marked vertex looks like
\xy (-4,4);(4,-4) **@{-}, 
(4,4);(-4,-4) **@{-}, 
(3,3.2)*{\llcorner}, 
(-3,-3.4)*{\urcorner}, 
(-2.5,2)*{\ulcorner},
(2.5,-2.4)*{\lrcorner}, 
(3,-0.2);(-3,-0.2) **@{-},
(3,0);(-3,0) **@{-}, 
(3,0.2);(-3,0.2) **@{-}, 
\endxy. 
It is known that a surface-link is presented by a marked graph diagram 
(cf. \cite{Lo, Yo}),
and such a presentation diagram is unique up to Yoshikawa moves 
(cf. \cite{KK, Sw}). 
See Section \ref{sect-omgr-osl} for details. By using marked graph diagrams, some properties and invariants of surface-links were studied in \cite{As,JKaL,JKL,KKL,KJL1,KJL2,Le1,Le2,Le3,Le4,So,Yo}.

A {\it tangled trivalent graph diagram} is an oriented link diagram possibly with some trivalent vertices whose incident edges are oriented all inward or all outward. In \cite{Kup}, G. Kuperberg introduced a regular isotopy  invariant $\langle\cdot\rangle_{A_2}$, called the $A_2$ bracket (polynomial), for  tangled trivalent graph diagrams, which is derived from the Reshetikhin-Turaev quantum invariant (cf. \cite{RT}) corresponding to the simple Lie algebra $A_2$. 

In \cite{Le3}, the fourth author introduced a method of constructing invariant for a surface-link by means of a marked graph diagram and a state-sum model associated to a 
classical link invariant as its state evaluation. In this paper, 
we define a polynomial 
in $\mathbb Z[a^{-1}, a, x,y]$ for an oriented marked graph diagram by using the $A_2$ bracket $\langle\cdot\rangle_{A_2}$  in the line of \cite{Le3} and study how the 
polynomial changes under Yoshikawa moves.  In the process of this argument, 
the notion of a ribbon marked graph is introduced to show 
that this polynomial is useful for an invariant of a ribbon 2-knot. 

This paper is organized as follows. In Section \ref{sect-omgr-osl}, we review marked graphs and their presenting surface-links. In Section \ref{sect-qa2-inv}, we recall the quantum $A_2$ invariant $\langle\cdot\rangle_{A_2}$ for link diagrams and tangled trivalent graph diagrams. In Section \ref{sect-poly-osl}, we define a Laurent polynomial 
$\ll D \gg (a,x,y) \in \mathbb Z[a^{-1},a,x,y]$ for an oriented marked graph diagram 
$D$. In Section~\ref{sect-on-inv-osl}, we study how the polynomial  $\ll \cdot \gg$ 
changes under Yoshikawa moves $\Gamma_6$,  $\Gamma'_6$, $\Gamma_7$ and $\Gamma_8$.  
In Section~\ref{sect-special}, we discuss specializations of the invariant by 
considering some quotients of the ring $\mathbb Z[a^{-1},a,x,y]$.  
In Section~\ref{sect:ribbon}, the notion of a ribbon marked graph is introduced to 
derive an invariant of ribbon $2$-knots from the polynomial.  
 In Sections~\ref{sect-pf-prop-e3t} and \ref{sect-pf-prop-e4t}, we prove key lemmas 
used in Section~\ref{sect-on-inv-osl}. 


\section{Marked graphs and surface-links}\label{sect-omgr-osl}

In this section, we review marked graphs and their presenting surface-links. A {\it marked graph} is a spatial graph $G$ in $\mathbb R^3$ which satisfies the following:
\begin{itemize}
  \item $G$ is a finite regular graph with $4$-valent vertices, say $v_1, v_2, . . . , v_n$.
  \item Each $v_i$ is a rigid vertex; that is, we fix a rectangular neighborhood $N_i$ homeomorphic to $\{(x, y)|-1 \leq x, y \leq 1\},$ where $v_i$ corresponds to the origin and the edges incident to $v_i$ are represented by $x^2 = y^2$.
  \item Each $v_i$ has a {\it marker}, which is the interval on $N_i$ given by  $\{(x, 0)|-1 \leq x \leq 1\}$.
\end{itemize}

An {\it orientation} of a marked graph $G$ is a choice of an orientation for each edge of $G$ in such a way that every vertex in $G$ looks like 
\xy (-5,5);(5,-5) **@{-}, 
(5,5);(-5,-5) **@{-}, 
(3,3.2)*{\llcorner}, 
(-3,-3.4)*{\urcorner}, 
(-2.5,2)*{\ulcorner},
(2.5,-2.4)*{\lrcorner}, 
(3,-0.2);(-3,-0.2) **@{-},
(3,0);(-3,0) **@{-}, 
(3,0.2);(-3,0.2) **@{-}, 
\endxy.
A marked graph $G$ is said to be 
{\it orientable} if it admits an orientation. Otherwise, it is said to be {\it non-orientable}. By an {\it oriented marked graph} we mean an orientable marked graph with a fixed orientation. Two oriented marked graphs are said to be {\it equivalent} if they are ambient isotopic in $\mathbb R^3$ with keeping the rectangular neighborhoods, markers and the orientation. As usual, a marked graph can be described by a diagram in $\mathbb R^2$, which is a link diagram with some $4$-valent vertices equipped with markers (see Figure~\ref{fig-nori-mg}). 

Two marked graph diagrams present equivalent marked graphs if and only if they are related by a finite sequence of Yoshikawa moves $\Gamma_1, \Gamma'_1, \Gamma_2, \Gamma_3, \Gamma_4, \Gamma'_4$ and $\Gamma_5$ depicted in Figure~\ref{fig-moves-type-II-o}.

\begin{figure}[ht]
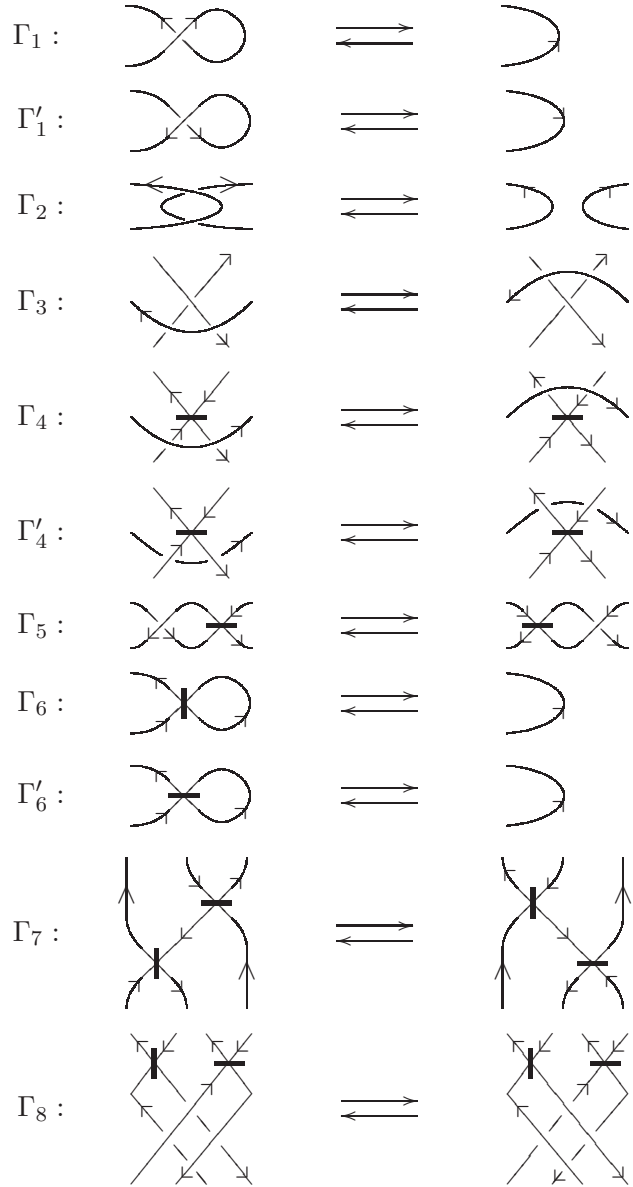

\centerline{\xy 
(12,2);(16,6) **@{-}, 
(12,6);(13.5,4.5) **@{-},
(14.5,3.5);(16,2) **@{-}, 
(16,6);(22,6) **\crv{(18,8)&(20,8)},
(16,2);(22,2) **\crv{(18,0)&(20,0)}, (22,6);(22,2) **\crv{(23.5,4)},
(7,8);(12,6) **\crv{(10,8)}, (7,0);(12,2) **\crv{(10,0)}, (12.4,5) *{\ulcorner}, (15.6,5) *{\urcorner},
(35,5);(45,5) **@{-} ?>*\dir{>}, (35,3);(45,3) **@{-} ?<*\dir{<},
(63.8,2) *{\urcorner},
(57,8);(57,0) **\crv{(67,7)&(67,1)}, (-5,4)*{\Gamma_1 :}, (73,4)*{},
\endxy}
\vskip.3cm
\centerline{ 
\xy (12,2);(16,6) **@{-}, 
(12,6);(13.5,4.5) **@{-},
(14.5,3.5);(16,2) **@{-}, 
(16,6);(22,6) **\crv{(18,8)&(20,8)},
(16,2);(22,2) **\crv{(18,0)&(20,0)}, (22,6);(22,2) **\crv{(23.5,4)},
(7,8);(12,6) **\crv{(10,8)}, (7,0);(12,2) **\crv{(10,0)}, (12.4,2.5) *{\llcorner}, (15.6,2.5) *{\lrcorner},
(35,5);(45,5) **@{-} ?>*\dir{>}, (35,3);(45,3) **@{-} ?<*\dir{<},(63.85,5.2) *{\lrcorner},
(57,8);(57,0) **\crv{(67,7)&(67,1)}, (-5,4)*{\Gamma'_1 :}, (73,4)*{},
\endxy}
\vskip.3cm
\centerline{ \xy (7,7);(7,1)  **\crv{(23,6)&(23,2)}, (16,6.3);(23,7)
**\crv{(19,6.9)}, (16,1.7);(23,1) **\crv{(19,1.1)},
(14,5.7);(14,2.3) **\crv{(8,4)}, (10,6.9) *{<}, (20,6.9) *{>},
(35,5);(45,5) **@{-} ?>*\dir{>}, (35,3);(45,3) **@{-} ?<*\dir{<},
(57,7);(57,1) **\crv{(65,6)&(65,2)}, (73,7);(73,1)
**\crv{(65,6)&(65,2)}, (60,5.3) *{\ulcorner}, (70,5.3) *{\urcorner}, (-5,4)*{\Gamma_2 :},
\endxy}
\vskip.3cm
\centerline{ 
\xy (7,6);(23,6) **\crv{(15,-2)}, 
(10,0);(11.5,1.8) **@{-}, 
(17.5,3);(14.5,6.6) **@{-},
(14.5,6.6);(10,12) **@{-}, 
(20,12);(15.5,6.6) **@{-},
(14.5,5.5);(12.5,3) **@{-},
(18.5,1.8);(20,0) **@{-},
(19.5,11) *{\urcorner}, 
(19,1.1) *{\lrcorner},  
(9,3.5) *{\ulcorner},
(35,7);(45,7) **@{-} ?>*\dir{>}, 
(35,5);(45,5) **@{-} ?<*\dir{<},
(57,6);(73,6) **\crv{(65,14)}, 
(70,12);(68.5,10.2) **@{-}, 
(67.5,9);(65.5,6.5) **@{-}, 
(64.6,5.5);(60,0) **@{-}, 
(62.5,9);(64.4,6.6) **@{-}, 
(64.4,6.6);(70,0) **@{-}, 
(61.5,10.2);(60,12) **@{-},
(69.5,11) *{\urcorner}, 
(69,1.1) *{\lrcorner},  
(58,7) *{\llcorner},
(-5,6)*{\Gamma_3:},
\endxy}
\vskip.3cm
 \centerline{ \xy 
 (7,6);(23,6)  **\crv{(15,-2)}, 
 (10,0);(11.5,1.8) **@{-},
(12.5,3);(20,12) **@{-}, 
(10,12);(17.5,3) **@{-}, 
(18.5,1.8);(20,0) **@{-}, 
(13,6);(17,6) **@{-}, (13,6.1);(17,6.1) **@{-}, (13,5.9);(17,5.9)
**@{-}, (13,6.2);(17,6.2) **@{-}, (13,5.8);(17,5.8) **@{-},
  (13,3.1) *{\urcorner}, (17.5,8.9) *{\llcorner}, (19,1.1) *{\lrcorner},  (21,3.5) *{\urcorner},(13,8) *{\ulcorner},
(35,7);(45,7) **@{-} ?>*\dir{>}, 
(35,5);(45,5) **@{-} ?<*\dir{<},
(57,6);(73,6)  **\crv{(65,14)}, 
(70,12);(68.5,10.2) **@{-},
(67.5,9);(60,0) **@{-}, 
(70,0);(62.5,9) **@{-}, 
(61.5,10.2);(60,12) **@{-}, 
(63,6);(67,6) **@{-}, (63,6.1);(67,6.1) **@{-}, (63,5.9);(67,5.9)
**@{-}, (63,6.2);(67,6.2) **@{-}, (63,5.8);(67,5.8) **@{-},
(62,2) *{\urcorner}, (67,8.3) *{\llcorner}, (67.5,3) *{\lrcorner},  (70.9,8) *{\lrcorner},(61.3,10) *{\ulcorner}, 
(-5,6)*{\Gamma_4:},
\endxy}
\vskip.3cm
 \centerline{ \xy 
  (13,2.2);(17,2.2)  **\crv{(15,1.7)}, 
  (7,6);(11,3)  **\crv{(10,3.5)}, 
  (23,6);(19,3)  **\crv{(20,3.5)}, 
 (10,0);(20,12) **@{-}, 
(10,12);(20,0) **@{-}, 
(13,6);(17,6) **@{-}, (13,6.1);(17,6.1) **@{-}, (13,5.9);(17,5.9)
**@{-}, (13,6.2);(17,6.2) **@{-}, (13,5.8);(17,5.8) **@{-}, 
(13,3.1) *{\urcorner}, (17.5,8.9) *{\llcorner}, (19,1.1) *{\lrcorner},  (21,3.5) *{\urcorner},(13,8) *{\ulcorner},
(35,7);(45,7) **@{-} ?>*\dir{>}, 
(35,5);(45,5) **@{-} ?<*\dir{<},
   (63,9.8);(67,9.8)  **\crv{(65,10.3)}, 
  (57,6);(61,9)  **\crv{(60,8.5)}, 
  (73,6);(69,9)  **\crv{(70,8.5)}, 
(70,12);(60,0) **@{-}, 
(70,0);(60,12) **@{-}, 
(63,6);(67,6) **@{-}, (63,6.1);(67,6.1) **@{-}, (63,5.9);(67,5.9)
**@{-},(63,6.2);(67,6.2) **@{-}, (63,5.8);(67,5.8) **@{-}, 
(62,2) *{\urcorner}, (67,8.3) *{\llcorner}, (67.5,3) *{\lrcorner},  (70.9,8) *{\lrcorner},(61.3,10) *{\ulcorner}, 
(-5,6)*{\Gamma'_4:},
\endxy}
\vskip.3cm
\centerline{ \xy (9,2);(13,6) **@{-}, (9,6);(10.5,4.5) **@{-},
(11.5,3.5);(13,2) **@{-}, (17,2);(21,6) **@{-}, (17,6);(21,2)
**@{-}, (13,6);(17,6) **\crv{(15,8)}, (13,2);(17,2) **\crv{(15,0)},
(7,7);(9,6) **\crv{(8,7)}, (7,1);(9,2) **\crv{(8,1)}, (23,7);(21,6)
**\crv{(22,7)}, (23,1);(21,2) **\crv{(22,1)}, 
(17,4);(21,4) **@{-}, (17,4.1);(21,4.1) **@{-}, (17,3.9);(21,3.9)
**@{-}, (17,4.2);(21,4.2) **@{-}, (17,3.8);(21,3.8) **@{-},
(10,3) *{\llcorner},  (12,3) *{\lrcorner}, (21,6) *{\llcorner},(21,2.2) *{\lrcorner},
(35,5);(45,5) **@{-} ?>*\dir{>}, (35,3);(45,3) **@{-} ?<*\dir{<},
(59,2);(63,6) **@{-}, (59,6);(63,2) **@{-}, (67,2);(71,6) **@{-},
(67,6);(68.5,4.5) **@{-}, (69.5,3.5);(71,2) **@{-}, (63,6);(67,6)
**\crv{(65,8)}, (63,2);(67,2) **\crv{(65,0)}, (57,7);(59,6)
**\crv{(58,7)}, (57,1);(59,2) **\crv{(58,1)}, (73,7);(71,6)
**\crv{(72,7)}, (73,1);(71,2) **\crv{(72,1)}, 
(63,4);(59,4) **@{-}, (63,4.1);(59,4.1) **@{-}, (63,3.9);(59,3.9)
**@{-}, (63,4.2);(59,4.2) **@{-}, (63,3.8);(59,3.8) **@{-},
(59.5,2.5) *{\llcorner},  (59,6) *{\lrcorner}, (71,6) *{\llcorner},(71,2.2) *{\lrcorner},
 (-5,4)*{\Gamma_5:},
\endxy}
\vskip.3cm
\centerline{ 
\xy (12,6);(16,2) **@{-}, (12,2);(16,6) **@{-},
(16,6);(22,6) **\crv{(18,8)&(20,8)}, (16,2);(22,2)
**\crv{(18,0)&(20,0)}, (22,6);(22,2) **\crv{(23.5,4)}, (7,8);(12,6)
**\crv{(10,8)}, (7,0);(12,2) **\crv{(10,0)}, (11,0.4) *{\urcorner}, (11,6) *{\ulcorner},(21.5,1)*{\urcorner},
(35,5);(45,5) **@{-} ?>*\dir{>}, (35,3);(45,3) **@{-} ?<*\dir{<},
(57,8);(57,0) **\crv{(67,7)&(67,1)}, (-5,4)*{\Gamma_6 :}, (73,4)*{},
(14,6);(14,2) **@{-}, (14.1,6);(14.1,2) **@{-}, (13.9,6);(13.9,2)
**@{-}, (14.2,6);(14.2,2) **@{-}, (13.8,6);(13.8,2) **@{-}, 
(63.8,2) *{\urcorner},
\endxy}
\vskip.3cm
\centerline{ \xy (12,6);(16,2) **@{-}, (12,2);(16,6) **@{-},
(16,6);(22,6) **\crv{(18,8)&(20,8)}, (16,2);(22,2)
**\crv{(18,0)&(20,0)}, (22,6);(22,2) **\crv{(23.5,4)}, (7,8);(12,6)
**\crv{(10,8)}, (7,0);(12,2) **\crv{(10,0)}, (11,0.4) *{\urcorner}, (11,6) *{\ulcorner},(21.5,1)*{\urcorner},
(35,5);(45,5) **@{-} ?>*\dir{>}, (35,3);(45,3) **@{-} ?<*\dir{<},
(57,8);(57,0) **\crv{(67,7)&(67,1)}, (-5,4)*{\Gamma'_6 :},
(73,4)*{}, (12,4);(16,4) **@{-}, (12,4.1);(16,4.1) **@{-},
(12,4.2);(16,4.2) **@{-}, (12,3.9);(16,3.9) **@{-},
(12,3.8);(16,3.8) **@{-}, (63.8,2) *{\urcorner},
\endxy}
\vskip.3cm
\centerline{ \xy (9,4);(17,12) **@{-}, (9,8);(13,4) **@{-},
(17,12);(21,16) **@{-}, (17,16);(21,12) **@{-}, (7,0);(9,4)
**\crv{(7,2)}, (7,12);(9,8) **\crv{(7,10)}, (15,0);(13,4)
**\crv{(15,2)}, (17,16);(15,20) **\crv{(15,18)}, (21,16);(23,20)
**\crv{(23,18)}, (21,12);(23,8) **\crv{(23,10)}, (7,12);(7,20)
**@{-}, (23,8);(23,0) **@{-},
(11,4);(11,8) **@{-}, 
(10.9,4);(10.9,8) **@{-}, 
(11.1,4);(11.1,8) **@{-}, 
(10.8,4);(10.8,8) **@{-}, 
(11.2,4);(11.2,8) **@{-},
(17,14);(21,14) **@{-}, 
(17,14.1);(21,14.1) **@{-},
(17,13.9);(21,13.9) **@{-}, 
(17,14.2);(21,14.2) **@{-},
(17,13.8);(21,13.8) **@{-},
(7,15) *{\wedge},(23,5) *{\wedge},(15,10) *{\llcorner},(8,2.3) *{\urcorner}, (21.5,16) *{\urcorner},(16,17) *{\lrcorner},(13.7,3) *{\lrcorner},
(35,11);(45,11) **@{-} ?>*\dir{>}, (35,9);(45,9) **@{-} ?<*\dir{<},
(71,4);(63,12) **@{-}, (71,8);(67,4) **@{-}, (63,12);(59,16) **@{-},
(63,16);(59,12) **@{-}, (73,0);(71,4) **\crv{(73,2)}, (73,12);(71,8)
**\crv{(73,10)}, (65,0);(67,4) **\crv{(65,2)}, (63,16);(65,20)
**\crv{(65,18)}, (59,16);(57,20) **\crv{(57,18)}, (59,12);(57,8)
**\crv{(57,10)}, (73,12);(73,20) **@{-}, (57,8);(57,0) **@{-},
(61,12);(61,16) **@{-}, 
(61.1,12);(61.1,16) **@{-},
(60.9,12);(60.9,16) **@{-}, 
(61.2,12);(61.2,16) **@{-},
(60.8,12);(60.8,16) **@{-},
(57,5) *{\wedge},(73,15) *{\wedge},(65,10) *{\lrcorner},(58,17) *{\ulcorner}, (71.5,3) *{\ulcorner},(66.3,3) *{\llcorner},(63.7,16.8) *{\llcorner},
(67,6);(71,6) **@{-}, 
(67,6.1);(71,6.1) **@{-}, 
(67,5.9);(71,5.9) **@{-}, 
(67,6.2);(71,6.2) **@{-},
(67,5.8);(71,5.8) **@{-},  
(-5,10)*{\Gamma_7:}, 
 \endxy}
\vskip.3cm
\centerline{ 
\xy (7,20);(14.2,11) **@{-}, (15.8,9);(17.4,7) **@{-},
(19,5);(23,0) **@{-}, (13,20);(7,12) **@{-}, (7,12);(11.2,7) **@{-},
(12.7,5.2);(14.4,3.2) **@{-}, (15.7,1.6);(17,0) **@{-},
(17,20);(23,12) **@{-}, (13,0);(23,12) **@{-}, (7,0);(23,20) **@{-},
(10,18);(10,14) **@{-}, (10.1,18);(10.1,14) **@{-},
(9.9,18);(9.9,14) **@{-}, (10.2,18);(10.2,14) **@{-},
(9.8,18);(9.8,14) **@{-}, (18,16);(22,16) **@{-},
(18,16.1);(22,16.1) **@{-}, (18,15.9);(22,15.9) **@{-},
(18,16.2);(22,16.2) **@{-}, (18,15.8);(22,15.8) **@{-},
 (8.5,17.7) *{\ulcorner}, (18.5,17.7) *{\ulcorner}, (9.1,9) *{\ulcorner}, (12,18.4) *{\llcorner}, (14.5,1.6) *{\llcorner}, (21.8,18.4) *{\llcorner}, (17,12) *{\urcorner}, (21.7,1.6) *{\lrcorner},
(35,11);(45,11) **@{-} ?>*\dir{>}, (35,9);(45,9) **@{-} ?<*\dir{<},
(73,20);(65.8,11) **@{-}, (64.2,9);(62.6,7) **@{-}, (61,5);(57,0)
**@{-}, (67,20);(73,12) **@{-}, (73,12);(68.8,7) **@{-},
(67.3,5.2);(65.6,3.2) **@{-}, (64.3,1.6);(63,0) **@{-},
(63,20);(57,12) **@{-}, (67,0);(57,12) **@{-}, (73,0);(57,20)
**@{-},
 (58.5,17.7) *{\ulcorner}, (68.5,17.7) *{\ulcorner}, (59.1,9) *{\ulcorner}, (62,18.4) *{\llcorner}, (64,1.2) *{\llcorner}, (71.8,18.4) *{\llcorner}, (67,12) *{\urcorner}, (71.7,1.6) *{\lrcorner},
(60,18);(60,14) **@{-}, (60.1,18);(60.1,14) **@{-},
(59.9,18);(59.9,14) **@{-}, (60.2,18);(60.2,14) **@{-},
(59.8,18);(59.8,14) **@{-}, (68,16);(72,16) **@{-},
(68,16.1);(72,16.1) **@{-}, (68,15.9);(72,15.9) **@{-},
(68,16.2);(72,16.2) **@{-}, (68,15.8);(72,15.8) **@{-},
(-5,10)*{\Gamma_{8}:}, 
\endxy}
\caption{Yoshikawa moves}
\label{fig-moves-type-II-o}
\end{figure}

By a {\it surface-link} we mean a closed 2-manifold smoothly (or piecewise linearly and locally flatly) embedded in the $4$-space $\mathbb R^4$. Two surface-links are said to be {\it equivalent} if they are ambient isotopic. 

For a given marked graph diagram $D$, let $L_-(D)$ and $L_+(D)$ be classical link diagrams obtained from $D$ by replacing each marked vertex \xy (-4,4);(4,-4) **@{-}, 
(4,4);(-4,-4) **@{-},  
(3,-0.2);(-3,-0.2) **@{-},
(3,0);(-3,0) **@{-}, 
(3,0.2);(-3,0.2) **@{-}, 
\endxy with \xy (-4,4);(-4,-4) **\crv{(1,0)},  
(4,4);(4,-4) **\crv{(-1,0)}, 
\endxy and \xy (-4,4);(4,4) **\crv{(0,-1)}, 
(4,-4);(-4,-4) **\crv{(0,1)},   
\endxy, respectively (see Figure~\ref{fig-nori-mg}). We call $L_-(D)$ and $L_+(D)$ the {\it negative resolution} and the {\it positive resolution} of D, respectively.
A marked graph diagram $D$ is said to be {\it admissible} if both resolutions $L_-(D)$ and $L_+(D)$ are diagrams of trivial links.  
A marked graph is called  {\it admissible}  if its diagram is admissible. 

\begin{figure}[ht]
\begin{center}
\resizebox{0.65\textwidth}{!}{%
  \includegraphics{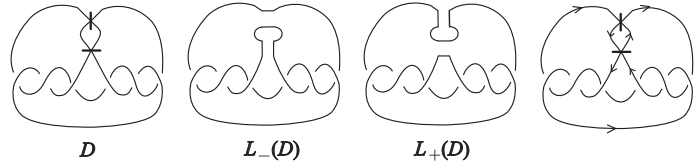}}
\caption{A marked graph diagram and its resolutions}\label{fig-nori-mg}
\end{center}
\end{figure}

For $t \in \mathbb R,$ we denote by $\mathbb R^3_t$ the hyperplane of $\mathbb R^4$ whose fourth coordinate
is equal to $t \in \mathbb R$, i.e., $\mathbb R^3_t := \{(x_1, x_2, x_3, x_4) \in
\mathbb R^4~|~ x_4 = t \}$.  Let $p:\mathbb R^4 \to \mathbb R$ be the projection given by $p(x_1, x_2, x_3, x_4)=x_4$. 
Any surface-link $\mathcal L$ can be deformed into a surface-link $\mathcal L'$, called a {\it hyperbolic splitting} of $\mathcal L$,
by an ambient isotopy of $\mathbb R^4$ in such a way that
the projection $p: \mathcal L' \to \mathbb R$ satisfies that
all critical points are non-degenerate,
all the index 0 critical points (minimal points) are in $\mathbb R^3_{-1}$,
all the index 1 critical points (saddle points) are in $\mathbb R^3_0$, and
all the index 2 critical points (maximal points) are in $\mathbb R^3_1$ (cf. 
 \cite{Ka2,Kaw,KSS,Lo}). 

Let $\mathcal L$ be a surface-link and let ${\mathcal L'}$ be a hyperbolic splitting of $\mathcal L.$ The cross-section $\mathcal L'\cap \mathbb R^3_0$ at $t=0$ is a spatial $4$-valent regular graph in $\mathbb R^3_0$. We give a marker at each $4$-valent vertex (saddle point) that indicates how the saddle point opens up above as illustrated in Figure~\ref{sleesan2:fig1}. 

\begin{figure}[ht]
\begin{center}
\resizebox{0.30\textwidth}{!}{%
  \includegraphics{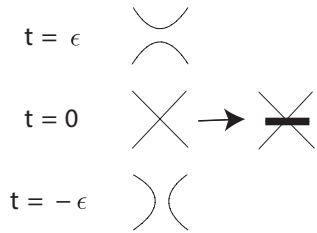} }
\caption{Marking of a vertex} \label{sleesan2:fig1}
\end{center}
\end{figure} 

The resulting marked graph $G$ is called a {\it marked graph} presenting $\mathcal L$.   Let $D$ be a diagram of $G$.  The diagram $D$ is clearly admissible, which is called a {\it marked graph diagram} (or {\it ch-diagram} (cf. \cite{So})) {\it presenting} $\mathcal L$.  Conversely, any admissible marked graph presents a surface-link.  

When $\mathcal L$ is an oriented surface-link, we choose an orientation for each edge of $\mathcal L' \cap \mathbb R^3_0$ that coincides with the induced orientation on the boundary of $\mathcal L' \cap \mathbb R^3 \times (-\infty, 0]$ by the orientation of $\mathcal L'$ inherited from the orientation of $\mathcal L$. The resulting oriented marked graph $G$ (or its diagram $D$) is called an {\it oriented marked graph} (or an {\it oriented marked graph diagram}) presenting  $\mathcal L$. 

It is known that two oriented marked graph diagrams present equivalent oriented surface-links if and only if they are related by a finite sequence of $11$ Yoshikawa moves shown in Figure~\ref{fig-moves-type-II-o} (cf. \cite{KJL2,KK,Sw}).  


\section{The $A_2$ bracket polynomial of links and tangled trivalent graphs}\label{sect-qa2-inv}

In this section, we review the $A_2$ bracket $\langle \cdot \rangle_{A_2}$ for regular isotopy of oriented link diagrams and tangled trivalent graph diagrams derived in \cite{Kup}. 
Although the $A_2$ bracket in \cite{Kup} is defined such that the value for the empty diagram is $1$, we here adapt another initial condition that the value of the trivial knot diagram is $1$.  

A {\it tangled trivalent graph diagram} (or an {\it $A_2$ freeway}, cf. \cite{Kup}) is an oriented link diagram in $S^2$ possibly with some trivalent vertices whose incident edges are oriented all inward or all outward as shown in Figure~\ref{triv-vrtx}. An example of a tangled trivalent graph diagram is in Figure~\ref{exmp-tvgd}. Throughout this paper we regard classical link diagrams as tangled trivalent graph diagrams without trivalent vertices otherwise specified. Two  tangled trivalent graph diagrams are said to be {\it regular isotopic} if they are related by a regular isotopy, which is defined to be a sequence of operations consisting of ambient isotopy of the 2-sphere $S^2$ and the combinatorial moves shown in Figure~\ref{reg-isotpy} with all possible orientations.

\begin{figure}[ht]
\begin{center}
\resizebox{0.40\textwidth}{!}{%
  \includegraphics{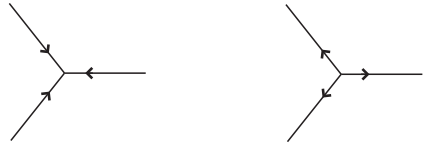} }
\caption{Trivalent vertices with orientation}\label{triv-vrtx}
\end{center}
\end{figure}

\begin{figure}[ht]
\begin{center}
\resizebox{0.30\textwidth}{!}{%
  \includegraphics{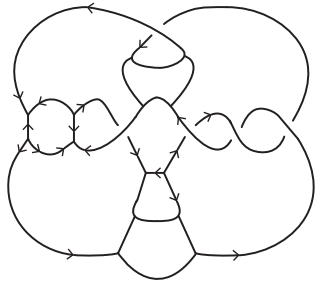} }
\caption{A tangled trivalent graph diagram}\label{exmp-tvgd}
\end{center}
\end{figure}

\begin{figure}[ht]
\begin{center}
\resizebox{0.60\textwidth}{!}{%
  \includegraphics{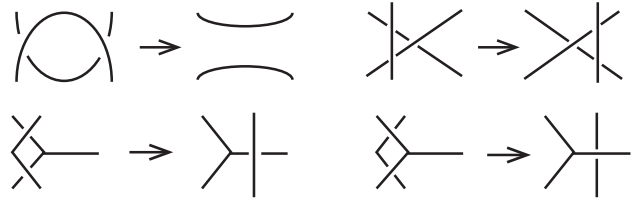} }
\caption{Moves on tangled trivalent graph diagrams}\label{reg-isotpy}
\end{center}
\end{figure}

In \cite{Kup}, G. Kuperberg derived an inductive, combinatorial definition of a polynomial valued invariant $\langle\cdot\rangle_{A_2}$ with values in the ring $\mathbb Z[q^{-\frac{1}{6}},q^{\frac{1}{6}}]$ of integral Laurent polynomials for regular isotopy classes of tangled trivalent graph diagrams. For our purpose, we present here the definition of $\langle\cdot\rangle_{A_2}$ with $q^{\frac{1}{6}}=a$. Moreover, we change the initial condition so that the trivial knot diagram has value $1$.  

We denote by $\xy (0,0) *\xycircle(3,3){-}, (3,0) *{\wedge}, \endxy$ or by $O$ the trivial knot diagram, by $O^\mu$ the trivial link diagram with $\mu$ components, 
by $D \sqcup D'$ a disjoint union of diagrams $D$ and $D'$. 

\begin{theorem}{\cite[Theorem 1.2]{Kup}} \label{thm-A2inv}
There is an invariant $\langle \cdot \rangle_{A_2}$ with values in the ring $\mathbb Z[a^{-1},a]$ of integral Laurent polynomials for regular isotopy of tangled trivalent graph diagrams, called the {\it $A_2$ bracket}, which is given by the following recursive rules:
\begin{itemize}
\item[{\rm ({\bf K0})}] $\langle O \rangle_{A_2} = 1$. 
\item[{\rm ({\bf K1})}] 
$\langle D \sqcup  O  \rangle_{A_2} 
= (a^{-6}+1+a^{6})\langle D \rangle_{A_2}$ for any diagram $D$. 
\item[{\rm ({\bf K2})}] 
$\langle ~\xy (-8,0);(-3,0) **@{-}, 
(0,0) *\xycircle(3,3){-}, 
(8,0);(3,0) **@{-}, 
(0,3) *{<}, (0,-3) *{<},(5.5,0) *{>}, 
(-5.5,0) *{>}, \endxy~ \rangle_{A_2} 
= (a^{-3}+a^{3})\langle ~\xy 
(-5,0);(5,0) **@{-}, (0,0) *{>}, 
\endxy~ \rangle_{A_2}.$
\item[{\rm ({\bf K3})}] $\langle ~\xy 
(-5,5);(-3,3) **@{-}, (5,-5);(3,-3) **@{-},
(-5,-5);(-3,-3) **@{-}, (5,5);(3,3) **@{-},
(-3,3);(3,3) **@{-}, (-3,3);(-3,-3) **@{-},
(-3,-3);(3,-3) **@{-}, (3,3);(3,-3) **@{-},
(-3.5,-3.5) *{\llcorner}, (3.5,3) *{\urcorner},
(-4.3,4.3) *{\lrcorner}, (4.5,-4.95) *{\ulcorner},
(0,3) *{<}, (0,-3) *{>},
(-3,0) *{\wedge}, (3,0) *{\vee},
\endxy~
\rangle_{A_2} = \langle ~\xy 
(-4,4);(4,4) **\crv{(0,-1)}, 
(4,-4);(-4,-4) **\crv{(0,1)}, 
(2.7,1.9)*{\urcorner}, 
(-2.7,-2.4)*{\llcorner}, 
\endxy~ \rangle_{A_2} 
+\langle ~\xy 
(-4,4);(-4,-4) **\crv{(1,0)},  
(4,4);(4,-4) **\crv{(-1,0)}, 
(-2,-1.9)*{\llcorner}, 
(2.5,2)*{\urcorner}, 
\endxy~  \rangle_{A_2}.$
\item[{\rm ({\bf K4})}] 
$\langle ~\xy 
(5,5);(-5,-5) **@{-} ?<*\dir{<},
(-5,5);(-2,2) **@{-} ?<*\dir{<}, 
(2,-2);(5,-5) **@{-}, 
\endxy~ 
\rangle_{A_2} = -a\langle ~\xy 
(-2,5);(2,2) **@{-}, (6,-5);(2,-2) **@{-},
(-2,-5);(2,-2) **@{-}, (6,5);(2,2) **@{-}, 
(2,2);(2,-2) **@{-},
(-0.6,-4.6) *{\urcorner}, 
(3.7,2.5) *{\urcorner},
(0.4,2.5) *{\ulcorner}, 
(4.4,-4.6) *{\ulcorner}, 
(2.05,0) *{\vee},
\endxy~ \rangle_{A_2} + a^{-2}\langle ~\xy 
(-4,4);(-4,-4) **\crv{(1,0)},  
(4,4);(4,-4) **\crv{(-1,0)}, 
(-2.5,1.9)*{\ulcorner}, 
(2.5,1.9)*{\urcorner}, 
\endxy~ \rangle_{A_2}.$
\item[{\rm ({\bf K5})}] $\langle 
~\xy 
(-5,5);(5,-5) **@{-} ?<*\dir{<},
(5,5);(2,2) **@{-} ?<*\dir{<}, 
(-2,-2);(-5,-5) **@{-}, 
\endxy~ \rangle_{A_2} 
= -a^{-1}\langle ~\xy 
(-2,5);(2,2) **@{-}, (6,-5);(2,-2) **@{-},
(-2,-5);(2,-2) **@{-}, (6,5);(2,2) **@{-}, 
(2,2);(2,-2) **@{-},
(-0.6,-4.6) *{\urcorner}, 
(3.7,2.5) *{\urcorner},
(0.4,2.5) *{\ulcorner}, 
(4.4,-4.6) *{\ulcorner}, 
(2.05,0) *{\vee},
\endxy~\rangle_{A_2} + a^{2}\langle ~\xy 
(-4,4);(-4,-4) **\crv{(1,0)},  
(4,4);(4,-4) **\crv{(-1,0)}, 
(-2.5,1.9)*{\ulcorner}, 
(2.5,1.9)*{\urcorner}, 
\endxy~ \rangle_{A_2}.$
\end{itemize}
\end{theorem}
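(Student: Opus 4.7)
The plan is to deduce the theorem from Kuperberg's original result \cite[Theorem 1.2]{Kup} by a simple renormalization. In \cite{Kup}, Kuperberg constructs an invariant $\langle\cdot\rangle'_{A_2}$ of regular isotopy of tangled trivalent graph diagrams with values in $\mathbb Z[a^{-1},a]$, normalized so that the empty diagram has value $1$ and satisfying skein rules formally identical to (K1)--(K5). Applying (K1) to $D=\emptyset$ gives $\langle O \rangle'_{A_2}=a^{-6}+1+a^6$, which I abbreviate $[3]$. The plan is to set
\[
\langle D \rangle_{A_2} := \frac{\langle D \rangle'_{A_2}}{[3]}
\]
for every non-empty tangled trivalent graph diagram $D$. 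With this normalization, (K0) becomes $\langle O\rangle_{A_2}=[3]/[3]=1$, and (K1)--(K5) each relate brackets of non-empty diagrams via $\mathbb Z[a^{-1},a]$-linear identities that remain valid after a common division by $[3]$. Regular-isotopy invariance is inherited immediately from $\langle\cdot\rangle'_{A_2}$.

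The delicate step is to check that $\langle D \rangle_{A_2}\in\mathbb Z[a^{-1},a]$, i.e., that $[3]$ divides $\langle D\rangle'_{A_2}$ for every non-empty $D$. I would prove this by induction on the total number of crossings and trivalent vertices in $D$: by (K3)--(K5), the evaluation reduces to a $\mathbb Z[a^{-1},a]$-linear combination of closed crossingless $A_2$-webs, so it suffices to verify divisibility for each such web. Using the Kuperberg spider relations (circle $=[3]$, bigon $=[2]\cdot(\text{strand})$, and the square relation) every non-empty closed web can be reduced to a $\mathbb Z[a^{-1},a]$-multiple of a disjoint union of circles, so the evaluation carries at least one factor of $[3]$. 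Equivalently, the claim follows representation-theoretically because the Reshetikhin--Turaev evaluation of any non-empty closed $\mathfrak{sl}_3$-web factors through the quantum dimension of the defining representation.

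The main obstacle is precisely this divisibility argument; although morally clear from the spider calculus, a careful verification requires induction on web complexity together with a separate treatment of the non-trivial square relation. A cleaner alternative, which I would consider if the divisibility proves cumbersome, is to avoid Kuperberg's empty-normalization altogether and instead verify directly that (K0)--(K5) provide a confluent recursive evaluation on all tangled trivalent graph diagrams---that is, the output is independent of the order in which the skein rules are applied---which is exactly the content of the consistency of the $A_2$ spider formalism developed in \cite{Kup} and yields the theorem without reference to the empty diagram.
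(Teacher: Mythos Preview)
The paper does not give its own proof of this theorem: it is stated as a citation of \cite[Theorem 1.2]{Kup}, with the normalization change (empty diagram $\mapsto 1$ replaced by $O\mapsto 1$) mentioned only in the paragraph preceding the statement. Your renormalization $\langle D\rangle_{A_2}=\langle D\rangle'_{A_2}/[3]$ is exactly what the paper has in mind, and you are right that (K1)--(K5) survive division by a common scalar while regular-isotopy invariance is inherited.

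Where you go beyond the paper is in worrying about integrality, i.e.\ whether $[3]=a^{-6}+1+a^6$ divides $\langle D\rangle'_{A_2}$ for every non-empty $D$. The paper simply asserts that the values lie in $\mathbb Z[a^{-1},a]$ without comment. Your sketch via reduction to closed webs and the spider relations is the right idea and can be made rigorous: every non-empty closed $\mathfrak{sl}_3$ web reduces, through circle/bigon/square removals, to a $\mathbb Z[a^{-1},a]$-linear combination of non-empty disjoint unions of circles, each contributing a factor of $[3]$. The only point to watch is that applying the square relation never produces the empty web as a summand, which holds because a square face in a closed web has its four outgoing strands attached to the rest of the (non-empty) diagram. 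Your alternative route through confluence of (K0)--(K5) is equally valid and is closer in spirit to how Kuperberg himself establishes well-definedness.
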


In \cite{RT}, Reshetikhin and Turaev showed that for any simple Lie algebra $\mathfrak g,$ there exists an invariant $RT_{\mathfrak g}$ of appropriately colored tangled ribbon graphs.
Each edge is colored by an irreducible representation of $\mathfrak g$ and each vertex is colored by a tensor of a certain kind. The $A_2$ bracket $\langle \cdot \rangle_{A_2}$, with $\langle \emptyset \rangle_{A_2}=1$, is identically equal to $RT_{\mathfrak g}$ with $\mathfrak g=A_2$ if all edges of a tangled trivalent graph diagram are colored with the $3$-dimensional representation $V_{1,0}$ whose dual is $V_{0,1}$. The colors for the vertices can be recognized as the determinant, or the usual $3$-dimensional cross product. For details, see \cite{Kup}. In particular, the $A_2$ bracket $\langle \cdot \rangle_{A_2}$ for oriented link diagrams is essentially a specialization of the HOMFLY polynomial (cf. \cite{F}) with a normalization that makes it a regular isotopy invariant rather than an isotopy invariant. Actually, it follows from {\rm ({\bf K4})} and {\rm ({\bf K5})} that for any skein triple $(D_+, D_-, D_0)$, 
\begin{equation}\label{A2-R1-move-1}
a^{-1} \langle D_+ \rangle_{A_2} - a \langle D_-  \rangle_{A_2} 
= (a^{-3} - a^{3}) \langle  D_0   \rangle_{A_2}.
\end{equation}
Moreover, it is easy to check that
\begin{align}\label{A2-R1-move-2}
&\langle  ~\xy (1,-3);(6,2) **@{-},
(1,3);(3.5,0.5) **@{-},
(4.5,-0.5);(6,-2) **@{-},
(6,2);(10,2) **\crv{(8,4)},
(6,-2);(10,-2) **\crv{(8,-4)}, 
(10,2);(10,-2) **\crv{(11.5,0)},
(2.4,1.1) *{\ulcorner}, 
(5.6,1.1) *{\urcorner}, 
\endxy~\rangle_{A_2} 
= a^{-8}\langle~\xy 
(1.5,-3);(1.5,3) **\crv{(4,0)}, 
(2.4,1.5) *{\ulcorner}, 
\endxy~\rangle_{A_2} 
=\langle~\xy 
(1,-3);(6,2) **@{-},
(1,3);(3.5,0.5) **@{-},
(4.5,-0.5);(6,-2) **@{-},
(6,2);(10,2) **\crv{(8,4)},
(6,-2);(10,-2) **\crv{(8,-4)}, 
(10,2);(10,-2) **\crv{(11.5,0)},
(2.4,-1.5) *{\llcorner}, 
(5.6,-1.5) *{\lrcorner}, 
\endxy~\rangle_{A_2},\\
&\langle  ~\xy 
(4.5,0.6);(6,2) **@{-},
(1,-3);(3.3,-0.7) **@{-},
(1,3);(3.5,0.5) **@{-},
(3.5,0.5);(6,-2) **@{-},
(6,2);(10,2) **\crv{(8,4)},
(6,-2);(10,-2) **\crv{(8,-4)}, 
(10,2);(10,-2) **\crv{(11.5,0)},
(2.4,1.1) *{\ulcorner}, 
(5.6,1.1) *{\urcorner}, 
\endxy~\rangle_{A_2} 
= a^{8}\langle~\xy 
(1.5,-3);(1.5,3) **\crv{(4,0)}, 
(2.4,1.5) *{\ulcorner}, 
\endxy~\rangle_{A_2} 
=\langle~\xy 
(4.5,0.6);(6,2) **@{-},
(1,-3);(3.3,-0.7) **@{-},
(1,3);(3.5,0.5) **@{-},
(3.5,0.5);(6,-2) **@{-},
(6,2);(10,2) **\crv{(8,4)},
(6,-2);(10,-2) **\crv{(8,-4)}, 
(10,2);(10,-2) **\crv{(11.5,0)},
(2.4,-1.5) *{\llcorner}, 
(5.6,-1.5) *{\lrcorner}, 
\endxy~\rangle_{A_2}.\notag
\end{align}


\section{A polynomial for oriented marked graphs via $A_2$ bracket}\label{sect-poly-osl}

In this section, we define a polynomial invariant of  
oriented marked graphs using the $A_2$ bracket $\langle \cdot \rangle_{A_2}$ in the line of \cite{Le3}. 

\begin{definition}\label{defn-L-poly-osl}
Let $D$ be an oriented marked graph diagram or a tangled trivalent graph diagram. Let
$[[D]]=[[D]](a,x,y)$ be a polynomial in $\mathbb Z[a^{-1},a,x,y]$
defined by the following two axioms:

\begin{itemize}
\item[{\rm ({\bf L1})}] 
$[[D]] =\langle D \rangle_{A_2}$ 
if $D$ is a tangled trivalent graph diagram.
\item[{\rm ({\bf L2})}] 
$[[~\xy (-4,4);(4,-4) **@{-}, 
(4,4);(-4,-4) **@{-}, 
(3,3.2)*{\llcorner}, 
(-3,-3.4)*{\urcorner}, 
(-2.5,2)*{\ulcorner},
(2.5,-2.4)*{\lrcorner}, 
(3,-0.2);(-3,-0.2) **@{-},
(3,0);(-3,0) **@{-}, 
(3,0.2);(-3,0.2) **@{-}, 
\endxy~]] =
x[[~\xy (-4,4);(4,4) **\crv{(0,-1)}, 
(4,-4);(-4,-4) **\crv{(0,1)}, 
(-2.5,1.9)*{\ulcorner}, (2.5,-2.4)*{\lrcorner}, 
\endxy~]] + y[[~\xy (-4,4);(-4,-4) **\crv{(1,0)},  
(4,4);(4,-4) **\crv{(-1,0)}, 
(-2.5,1.9)*{\ulcorner}, (2.5,-2.4)*{\lrcorner}, 
\endxy~]],$
\end{itemize}
where ~\xy (-4,4);(4,-4) **@{-}, 
(4,4);(-4,-4) **@{-}, 
(3,3.2)*{\llcorner}, 
(-3,-3.4)*{\urcorner}, 
(-2.5,2)*{\ulcorner},
(2.5,-2.4)*{\lrcorner}, 
(3,-0.2);(-3,-0.2) **@{-},
(3,0);(-3,0) **@{-}, 
(3,0.2);(-3,0.2) **@{-}, 
\endxy,~\xy (-4,4);(4,4) **\crv{(0,-1)}, 
(4,-4);(-4,-4) **\crv{(0,1)}, 
(-2.5,1.9)*{\ulcorner}, (2.5,-2.4)*{\lrcorner},    
\endxy~ and ~\xy (-4,4);(-4,-4) **\crv{(1,0)},  
(4,4);(4,-4) **\crv{(-1,0)}, 
(-2.5,1.9)*{\ulcorner}, (2.5,-2.4)*{\lrcorner}, 
\endxy~ denote the small
parts of larger diagrams that are identical except the local sites indicated by the small parts.
\end{definition}

The {\it writhe} $w(D)$ of an oriented marked graph diagram $D$ is defined to be the sum of the signs of all crossings in $D$ defined by
$\mathrm{sign} \left( \xy (5,2.5);(0,-2.5) **@{-} ?<*\dir{<},
(5,-2.5);(3,-0.5) **@{-}, (0,2.5);(2,0.5) **@{-} ?<*\dir{<},
\endxy \right) = 1$ and $\mathrm{sign} \left( \xy (0,2.5);(5,-2.5)
**@{-} ?<*\dir{<}, (0,-2.5);(2,-0.5) **@{-}, (5,2.5);(3,0.5)
**@{-} ?<*\dir{<}, \endxy \right) = -1$ analogue to the writhe of a link diagram. 

\begin{definition}\label{defn-poly-iv-24}
Let $D$ be an oriented marked graph diagram. We define $\ll D\gg=\ll D\gg(a,x,y)$ to be a polynomial in variables $a, x$ and $y$ with integral coefficients given by 
\begin{equation*}
\ll D\gg=a^{8w(D)}[[D]](a, x, y).
\end{equation*}
\end{definition}

Let $D$ be an oriented marked graph diagram.
A {\it state} of $D$ is an assignment of $T_\infty$ or $T_0$ to each marked vertex in $D$. Let $\mathcal S(D)$ be the set of all states of $D$. For each state $\sigma \in \mathcal S(D),$ let $D_{\sigma}$ denote the oriented link 
diagram obtained from $D$ by
replacing marked vertices of $D$ with two trivial $2$-tangles
according to the assignment $T_\infty$ or $T_0$ by the
state $\sigma$ as follows:
$$\underset{~T_\infty}
{\xy (-4,4);(4,-4) **@{-}, 
(4,4);(-4,-4) **@{-}, 
(3,3.2)*{\llcorner}, 
(-3,-3.4)*{\urcorner}, 
(-2.5,2)*{\ulcorner},
(2.5,-2.4)*{\lrcorner}, 
(3,-0.2);(-3,-0.2) **@{-},
(3,0);(-3,0) **@{-}, 
(3,0.2);(-3,0.2) **@{-}, 
\endxy} \longrightarrow 
\xy (-4,4);(4,4) **\crv{(0,-1)}, 
(4,-4);(-4,-4) **\crv{(0,1)},  
(-2.5,1.9)*{\ulcorner}, 
(2.5,-2.4)*{\lrcorner},   
\endxy,~~~~~~
\underset{~T_0}
{\xy (-4,4);(4,-4) **@{-}, 
(4,4);(-4,-4) **@{-}, 
(3,3.2)*{\llcorner}, 
(-3,-3.4)*{\urcorner}, 
(-2.5,2)*{\ulcorner},
(2.5,-2.4)*{\lrcorner}, 
(3,-0.2);(-3,-0.2) **@{-},
(3,0);(-3,0) **@{-}, 
(3,0.2);(-3,0.2) **@{-}, 
\endxy} \longrightarrow 
\xy (-4,4);(-4,-4) **\crv{(1,0)},  
(4,4);(4,-4) **\crv{(-1,0)}, 
(-2.5,1.9)*{\ulcorner}, 
(2.5,-2.4)*{\lrcorner}, 
\endxy.$$
Then the skein relation $({\bf L2})$
leads the following {\it state-sum formula} for the polynomial
$\ll D\gg$:
\begin{equation*}
\ll D\gg = a^{8w(D)}\sum_{\sigma \in \mathcal S(D)}
x^{\sigma(\infty)}y^{\sigma(0)} 
\langle D_\sigma \rangle_{A_2}, 
\end{equation*}
where $\sigma(\infty)$ and $\sigma(0)$ denote
the numbers of the assignment $T_\infty$ and $T_0$ of the
state $\sigma,$ respectively. 
Since $w(D)= w(D_\sigma)$ for any $\sigma \in \mathcal S(D)$, 
we also have the following formula
\begin{equation}\label{state formulaB}
\ll D \gg = \sum_{\sigma \in \mathcal S(D)}
x^{\sigma(\infty)}y^{\sigma(0)}
\ll  D_\sigma \gg. 
\end{equation}

\begin{theorem}\label{thm-skein-rel}
The polynomial $\ll \cdot \gg$ is an invariant for oriented marked graphs, i.e., for an oriented marked graph diagram $D$, the polynomial $\ll D\gg$ is invariant under Yoshikawa moves $\Gamma_1, \Gamma'_1, \Gamma_2, \Gamma_3, \Gamma_4, \Gamma'_4$ and $\Gamma_5$.  Moreover, it satisfies the following.  
\begin{itemize}
\item[(1)] 
$\ll O \gg = 1.$
\item[(2)] 
$\ll D \sqcup O \gg = (a^{-6}+1+a^6)\ll D  \gg$ for any oriented link diagram $D$.
\item[(3)] 
$a^{-9} \ll D_+ \gg - a^9 \ll D_- \gg = (a^{-3}-a^3) \ll D_0 \gg$ for any  
skein triple $(D_+, D_-, D_0)$  of  link diagrams. 
\item[(4)] 
$\ll ~\xy (-4,4);(4,-4) **@{-}, 
(4,4);(-4,-4) **@{-}, 
(3,3.2)*{\llcorner}, 
(-3,-3.4)*{\urcorner}, 
(-2.5,2)*{\ulcorner},
(2.5,-2.4)*{\lrcorner}, 
(3,-0.2);(-3,-0.2) **@{-},
(3,0);(-3,0) **@{-}, 
(3,0.2);(-3,0.2) **@{-}, 
\endxy~\gg =
x\ll~\xy (-4,4);(4,4) **\crv{(0,-1)}, 
(4,-4);(-4,-4) **\crv{(0,1)},  
(-2.5,1.9)*{\ulcorner}, (2.5,-2.4)*{\lrcorner},   
\endxy~\gg + y \ll~\xy (-4,4);(-4,-4) **\crv{(1,0)},  
(4,4);(4,-4) **\crv{(-1,0)}, (-2.5,1.9)*{\ulcorner}, (2.5,-2.4)*{\lrcorner}, 
\endxy~\gg.$
\end{itemize}
\end{theorem}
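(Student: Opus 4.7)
The plan is to dispatch the four listed identities first, then handle invariance under the Yoshikawa moves by splitting them into the moves involving no marker (where only the $A_2$ bracket and writhe intervene) and those involving markers (where the state-sum formula~(\ref{state formulaB}) reduces the problem to classical Reidemeister equivalences). Properties (1) and (2) are immediate from (L1), (K0), (K1) together with $w(O)=0$ and $w(D\sqcup O)=w(D)$. For (4), each of the two resolutions of a marked vertex inherits the writhe of the original diagram, so the factor $a^{8w(D)}$ is common to all three terms and (4) reduces to axiom (L2). For the skein relation (3), the writhes satisfy $w(D_+)=w(D_0)+1$ and $w(D_-)=w(D_0)-1$; multiplying equation~(\ref{A2-R1-move-1}) by $a^{8w(D_0)}$ and absorbing the extra $a^{\pm 8}$ factors into $\langle D_\pm\rangle_{A_2}$ yields the claimed identity.

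For the marker-free moves $\Gamma_2$ and $\Gamma_3$, the state set $\mathcal S(D)$ is unchanged and each state's resolution is modified by the same planar move. Since $\langle \cdot\rangle_{A_2}$ is regular isotopy invariant on tangled trivalent graph diagrams (Theorem~\ref{thm-A2inv}) and the writhe is preserved, $\ll D\gg$ is unchanged. For the kink moves $\Gamma_1$ and $\Gamma'_1$, equation~(\ref{A2-R1-move-2}) shows that the $A_2$ bracket is multiplied by $a^{\mp 8}$, while the writhe changes by $\pm 1$, so the prefactor $a^{8w(D)}$ compensates exactly, state by state.

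For the marker-involving moves $\Gamma_4$, $\Gamma'_4$ and $\Gamma_5$, I would apply~(\ref{state formulaB}) simultaneously at every marker in the local picture, rewriting the desired equality $\ll D^L\gg=\ll D^R\gg$ between the two sides of the move as a finite family of equalities $\ll D^L_\sigma\gg=\ll D^R_\sigma\gg$ indexed by assignments $\sigma$ of $T_\infty$ or $T_0$ to the markers in the tangle (two assignments each for $\Gamma_4$ and $\Gamma'_4$; four for $\Gamma_5$). The weight $x^{\sigma(\infty)}y^{\sigma(0)}$ depends only on the assignment and not on its placement, so it suffices to check, for each fixed assignment, that the resolved classical link diagrams on either side of the move are related by a sequence of $\Gamma_1,\Gamma'_1,\Gamma_2,\Gamma_3$ moves. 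Invariance of $\ll\cdot\gg$ on classical diagrams, established in the previous paragraph, then completes the argument.

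The conceptual steps are routine; the main obstacle will be the diagrammatic bookkeeping for the marker moves, especially $\Gamma_5$. One has to track that every resolution respects the source/sink orientation convention at the surviving marker endpoints and then exhibit, for each paired state, an explicit regular isotopy between the two resulting classical diagrams — a pencil-and-paper case analysis rather than a symbolic computation.
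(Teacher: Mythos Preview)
Your proposal is correct and follows essentially the same route as the paper: properties (1)--(4) are dispatched exactly as the paper does (via (K0), (K1), (\ref{A2-R1-move-1}), (\ref{A2-R1-move-2}) and the equality of writhes across a marked-vertex resolution), invariance under $\Gamma_1,\Gamma'_1,\Gamma_2,\Gamma_3$ comes from the writhe/$A_2$-bracket compensation, and invariance under the marker moves $\Gamma_4,\Gamma'_4,\Gamma_5$ is obtained by resolving the local marker and comparing the resulting classical diagrams via Reidemeister moves. One small slip: the local picture for $\Gamma_5$ contains a single marker on each side, so there are two local assignments, not four.
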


\begin{proof}
For $\Gamma_1$ and $\Gamma_1'$, it follows from (\ref{A2-R1-move-2}) that  
$[[~\xy (1,-3);(6,2) **@{-},
(1,3);(3.5,0.5) **@{-},
(4.5,-0.5);(6,-2) **@{-},
(6,2);(10,2) **\crv{(8,4)},
(6,-2);(10,-2) **\crv{(8,-4)}, 
(10,2);(10,-2) **\crv{(11.5,0)},
(2.4,1.1) *{\ulcorner}, 
(5.6,1.1) *{\urcorner}, 
\endxy~]] 
= a^{-8}[[~\xy 
(1.5,-3);(1.5,3) **\crv{(4,0)}, 
(2.4,1.5) *{\ulcorner}, \endxy~]] 
=[[~\xy (1,-3);(6,2) **@{-}, 
(1,3);(3.5,0.5) **@{-}, 
(4.5,-0.5);(6,-2) **@{-},
(6,2);(10,2) **\crv{(8,4)}, 
(6,-2);(10,-2) **\crv{(8,-4)}, 
(10,2);(10,-2) **\crv{(11.5,0)}, 
(2.4,-1.5) *{\llcorner}, 
(5.6,-1.5) *{\lrcorner}, 
\endxy~]].$
Hence, we have
\begin{equation*}
\begin{split}
\ll~\xy (1,-3);(6,2) **@{-},
(1,3);(3.5,0.5) **@{-},
(4.5,-0.5);(6,-2) **@{-},
(6,2);(10,2) **\crv{(8,4)},
(6,-2);(10,-2) **\crv{(8,-4)}, 
(10,2);(10,-2) **\crv{(11.5,0)},
(2.4,1.1) *{\ulcorner}, 
(5.6,1.1) *{\urcorner}, 
\endxy~\gg
&=a^{8w(~\xy (1,-3);(6,2) **@{-},
(1,3);(3.5,0.5) **@{-},
(4.5,-0.5);(6,-2) **@{-},
(6,2);(10,2) **\crv{(8,4)},
(6,-2);(10,-2) **\crv{(8,-4)}, 
(10,2);(10,-2) **\crv{(11.5,0)},
(2.4,1.1) *{\ulcorner}, 
(5.6,1.1) *{\urcorner}, 
\endxy~)}
[[~~\xy (1,-3);(6,2) **@{-},
(1,3);(3.5,0.5) **@{-},
(4.5,-0.5);(6,-2) **@{-},
(6,2);(10,2) **\crv{(8,4)},
(6,-2);(10,-2) **\crv{(8,-4)}, 
(10,2);(10,-2) **\crv{(11.5,0)},
(2.4,1.1) *{\ulcorner}, 
(5.6,1.1) *{\urcorner}, 
\endxy~~ ]]
=a^{8w(~~\xy 
(1.5,-3);(1.5,3) **\crv{(4,0)}, 
(2.4,1.5) *{\ulcorner}, 
\endxy ~~)+8}(a^{-8})
[[~\xy (1.5,-3);(1.5,3) **\crv{(4,0)}, 
(2.4,1.5) *{\ulcorner}, 
\endxy ~]]\\
&=a^{8w(~~\xy 
(1.5,-3);(1.5,3) **\crv{(4,0)}, 
(2.4,1.5) *{\ulcorner}, 
\endxy ~~)}
[[~\xy (1.5,-3);(1.5,3) **\crv{(4,0)}, 
(2.4,1.5) *{\ulcorner}, 
\endxy ~]]
=\ll~\xy (1.5,-3);(1.5,3) **\crv{(4,0)}, 
(2.4,1.5) *{\ulcorner}, 
\endxy~\gg.
\end{split}
\end{equation*}
Similarly, we obtain 
$\ll~\xy (1,-3);(6,2) **@{-},
(1,3);(3.5,0.5) **@{-},
(4.5,-0.5);(6,-2) **@{-},
(6,2);(10,2) **\crv{(8,4)},
(6,-2);(10,-2) **\crv{(8,-4)}, 
(10,2);(10,-2) **\crv{(11.5,0)},
(2.4,-1.5) *{\llcorner}, 
(5.6,-1.5) *{\lrcorner}, 
\endxy~\gg 
=\ll~\xy (1.5,-3);(1.5,3) **\crv{(4,0)}, 
(2.5,-1.5) *{\llcorner}, 
\endxy~\gg.$

Since the $A_2$ bracket $\langle \cdot \rangle_{A_2}$ for oriented link diagrams (and tangled trivalent graph diagrams) and the writhe $w(D)$ are both regular isotopy invariants,  $\ll\cdot \gg$ is invariant under $\Gamma_2$ and $\Gamma_3$.  

The invariance of $[[\cdot]]$ under the moves $\Gamma_4, \Gamma'_4$ and $\Gamma_5$ are seen as below, and since the writhe $w(D)$ is also invariant under these moves, we obtain the invariance of $\ll\cdot\gg$ under $\Gamma_4, \Gamma'_4$ and $\Gamma_5$.

\begin{align*}
\xy (4,6)*{[[},
 (7,6);(23,6)  **\crv{(15,-2)}, 
 (10,0);(11.5,1.8) **@{-},
(12.5,3);(20,12) **@{-}, 
(10,12);(17.5,3) **@{-}, 
(18.5,1.8);(20,0) **@{-}, 
(13,6);(17,6) **@{-}, 
(13,6.1);(17,6.1) **@{-}, 
(13,5.9);(17,5.9)**@{-}, 
(13,6.2);(17,6.2) **@{-}, 
(13,5.8);(17,5.8) **@{-},
(13,3.1) *{\urcorner}, 
(17.5,8.9) *{\llcorner}, 
(19,1.1) *{\lrcorner},  
(21,3.5) *{\urcorner},
(13,8) *{\ulcorner}, 
(27,6) *{]]},
\endxy
&\xy (1,6)*{= x [[},
 (7,6);(23,6)  **\crv{(15,-2)}, 
(10,0);(11,1.8) **@{-},
(10,12);(20,12) **\crv{(15,6)},
(12.5,4);(17.5,4) **\crv{(15,9)},
(18.6,2);(20,0) **@{-}, 
(17.5,9.9) *{\llcorner}, 
(19,1.1) *{\lrcorner},  
(21,3.5) *{\urcorner},
(27,6) *{]]},
\endxy
\xy (1,6)*{+ y [[},
(7,6);(23,6)  **\crv{(15,-2)}, 
(10,0);(11,1.8) **@{-},
(10,12);(12,4) **\crv{(14,8)},
(20,12);(17.5,4) **\crv{(15,8)},
(18.6,2);(20,0) **@{-},
(19,1.1) *{\lrcorner},  
(21,3.5) *{\urcorner},
(10.7,10.7) *{\ulcorner}, 
(27,6) *{]]},
\endxy\\
&\xy (51,6)*{= x [[},
(57,6);(73,6)  **\crv{(65,14)}, 
(70,12);(68.5,10.2) **@{-},
(67.5,9);(62.5,9)**\crv{(65,4)}, 
(60,0);(70,0) **\crv{(65,7.5)},
(61.5,10.2);(60,12) **@{-}, 
(67.5,3) *{\lrcorner},  
(70.9,8) *{\lrcorner},
(61.3,10) *{\ulcorner}, 
(77,6)*{]]},
\endxy \xy (51,6)*{+ y [[},
(57,6);(73,6)  **\crv{(65,14)}, 
(70,12);(68.5,10.2) **@{-},
(67.5,9);(70,0) **\crv{(65,3.5)}, 
(60,0);(62.5,9) **\crv{(64,3.5)}, 
(61.5,10.2);(60,12) **@{-}, 
(67.5,3) *{\lrcorner},  
(70.9,8) *{\lrcorner},
(61.3,10) *{\ulcorner}, 
(77,6)*{]]},
\endxy
\xy (52,6)*{= [[},
(57,6);(73,6)  **\crv{(65,14)}, 
(70,12);(68.5,10.2) **@{-},
(67.5,9);(60,0) **@{-}, 
(70,0);(62.5,9) **@{-}, 
(61.5,10.2);(60,12) **@{-}, 
(63,6);(67,6) **@{-}, 
(63,6.1);(67,6.1) **@{-}, 
(63,5.9);(67,5.9)**@{-}, 
(63,6.2);(67,6.2) **@{-}, 
(63,5.8);(67,5.8) **@{-},
(62,2) *{\urcorner}, 
(67,8.3) *{\llcorner}, 
(67.5,3) *{\lrcorner},  
(70.9,8) *{\lrcorner},
(61.3,10) *{\ulcorner}, 
(77,6)*{]]},
\endxy
\end{align*}

\begin{align*}
\xy (5,6)*{[[},
(13,2.2);(17,2.2)  **\crv{(15,1.7)}, 
(7,6);(11,3)  **\crv{(10,3.5)}, 
(23,6);(19,3)  **\crv{(20,3.5)}, 
(10,0);(20,12) **@{-}, 
(10,12);(20,0) **@{-}, 
(13,6);(17,6) **@{-},
(13,6.1);(17,6.1) **@{-}, 
(13,5.9);(17,5.9)**@{-}, 
(13,6.2);(17,6.2) **@{-}, 
(13,5.8);(17,5.8) **@{-}, 
(13,3.1) *{\urcorner}, 
(17.5,8.9) *{\llcorner}, 
(19,1.1) *{\lrcorner},  
(21,3.5) *{\urcorner},
(13,8) *{\ulcorner},
(27,6)*{]]},
\endxy
&\xy (1,6)*{= x [[},
(13,2.2);(17,2.2)  **\crv{(15,1.7)}, 
(7,6);(11,3)  **\crv{(10,3.5)}, 
(23,6);(19,3)  **\crv{(20,3.5)}, 
(10,0);(20,0) **\crv{(15,10)},
(20,12);(10,12) **\crv{(15,5)},
(13,3.5) *{\urcorner}, 
(21,3.5) *{\urcorner},
(13,8) *{\ulcorner},
(27,6)*{]]},
\endxy
\xy (1,6)*{+ y [[},
(13,2.2);(17,2.2) **\crv{(15,1.7)}, 
(7,6);(11,3) **\crv{(10,3.5)}, 
(23,6);(19,3) **\crv{(20,3.5)}, 
(10,0);(10,12) **\crv{(16,5)},
(20,12);(20,0) **\crv{(14,5)},
(19,1.1) *{\lrcorner},  (21,3.5) *{\urcorner},
(12,9) *{\ulcorner},
(27,6)*{]]},
\endxy\\
&\xy (52,6)*{= x [[},
(63,9.8);(67,9.8)  **\crv{(65,10.3)}, 
  (57,6);(61,9)  **\crv{(60,8.5)}, 
  (73,6);(69,9)  **\crv{(70,8.5)}, 
(60,0);(70,0) **\crv{(65,9)},
(70,12);(60,12) **\crv{(65,3)},
(67,8.3) *{\llcorner}, 
(67.5,3) *{\lrcorner},  
(70.9,8) *{\lrcorner},
(77,6)*{]]},
\endxy
\xy (51,6)*{+ y [[},
(63,9.8);(67,9.8)  **\crv{(65,10.3)}, 
  (57,6);(61,9)  **\crv{(60,8.5)}, 
  (73,6);(69,9)  **\crv{(70,8.5)}, 
(60,0);(60,12) **\crv{(66,5)},
(70,12);(70,0) **\crv{(64,5)},
(67.5,3) *{\lrcorner},  
(70.9,8) *{\lrcorner},
(61.3,10) *{\ulcorner}, 
(77,6)*{]]},
\endxy
\xy (52,6)*{= [[},
(63,9.8);(67,9.8)  **\crv{(65,10.3)}, 
  (57,6);(61,9)  **\crv{(60,8.5)}, 
  (73,6);(69,9)  **\crv{(70,8.5)}, 
(70,12);(60,0) **@{-}, 
(70,0);(60,12) **@{-}, 
(63,6);(67,6) **@{-}, 
(63,6.1);(67,6.1) **@{-}, 
(63,5.9);(67,5.9)**@{-},
(63,6.2);(67,6.2) **@{-}, 
(63,5.8);(67,5.8) **@{-}, 
(62,2) *{\urcorner}, 
(67,8.3) *{\llcorner}, 
(67.5,3) *{\lrcorner},  
(70.9,8) *{\lrcorner},
(61.3,10) *{\ulcorner}, 
(77,6)*{]]},
\endxy
\end{align*}

\begin{align*}
\xy (4,4)*{[[},
(9,2);(13,6) **@{-}, 
(9,6);(10.5,4.5) **@{-},
(11.5,3.5);(13,2) **@{-}, 
(17,2);(21,6) **@{-}, 
(17,6);(21,2)**@{-}, 
(13,6);(17,6) **\crv{(15,8)}, 
(13,2);(17,2) **\crv{(15,0)},
(7,7);(9,6) **\crv{(8,7)}, 
(7,1);(9,2) **\crv{(8,1)}, 
(23,7);(21,6)**\crv{(22,7)}, 
(23,1);(21,2) **\crv{(22,1)}, 
(17,4);(21,4) **@{-}, 
(17,4.1);(21,4.1) **@{-}, 
(17,3.9);(21,3.9)**@{-}, 
(17,4.2);(21,4.2) **@{-}, 
(17,3.8);(21,3.8) **@{-},
(10,3) *{\llcorner},  
(12,3) *{\lrcorner}, 
(21,6) *{\llcorner},
(21,2.2) *{\lrcorner},
(27,4)*{]]},
\endxy
&\xy (1,4)*{=x [[},
(9,2);(13,6) **@{-}, 
(9,6);(10.5,4.5) **@{-},
(11.5,3.5);(13,2) **@{-}, 
(17,2);(21,2) **\crv{(19,4)}, 
(17,6);(21,6) **\crv{(19,4)}, 
(13,6);(17,6) **\crv{(15,8)}, 
(13,2);(17,2) **\crv{(15,0)},
(7,7);(9,6) **\crv{(8,7)}, 
(7,1);(9,2) **\crv{(8,1)}, 
(23,7);(21,6) **\crv{(22,7)}, 
(23,1);(21,2) **\crv{(22,1)}, 
(10,3) *{\llcorner},  
(12,3) *{\lrcorner}, 
(21,6) *{\llcorner},
(21,2.2) *{\lrcorner},
(27,4)*{]]},
\endxy
\xy (1,4)*{+ y [[},
(9,2);(13,6) **@{-}, 
(9,6);(10.5,4.5) **@{-},
(11.5,3.5);(13,2) **@{-}, 
(17,2);(17,6) **\crv{(18.5,4)}, 
(21,6);(21,2) **\crv{(19,4)}, 
(13,6);(17,6) **\crv{(15,8)}, 
(13,2);(17,2) **\crv{(15,0)},
(7,7);(9,6) **\crv{(8,7)}, 
(7,1);(9,2) **\crv{(8,1)}, 
(23,7);(21,6)**\crv{(22,7)}, 
(23,1);(21,2) **\crv{(22,1)}, 
(10,3) *{\llcorner},  
(12,3) *{\lrcorner}, 
(21,6) *{\llcorner},
(21,2.2) *{\lrcorner},
(27,4)*{]]},
\endxy\\
&\xy (51,4)*{=x [[},
(59,2);(63,2) **\crv{(61,4)}, 
(59,6);(63,6) **\crv{(61,4)}, 
(67,2);(71,6) **@{-},
(67,6);(68.5,4.5) **@{-}, 
(69.5,3.5);(71,2) **@{-}, 
(63,6);(67,6)**\crv{(65,8)}, 
(63,2);(67,2) **\crv{(65,0)}, 
(57,7);(59,6)**\crv{(58,7)}, 
(57,1);(59,2) **\crv{(58,1)}, 
(73,7);(71,6)**\crv{(72,7)}, 
(73,1);(71,2) **\crv{(72,1)}, 
(59.5,2.5) *{\llcorner},  
(59,6) *{\lrcorner}, 
(71,6) *{\llcorner},
(71,2.2) *{\lrcorner},
(77,4)*{]]},
\endxy
\xy (51,4)*{+ y [[},
(59,2);(59,6) **\crv{(61,4)}, 
(63,6);(63,2) **\crv{(61.5,4)},
(67,2);(71,6) **@{-},
(67,6);(68.5,4.5) **@{-}, 
(69.5,3.5);(71,2) **@{-}, 
(63,6);(67,6)**\crv{(65,8)}, 
(63,2);(67,2) **\crv{(65,0)}, 
(57,7);(59,6)**\crv{(58,7)}, 
(57,1);(59,2) **\crv{(58,1)}, 
(73,7);(71,6)**\crv{(72,7)}, 
(73,1);(71,2) **\crv{(72,1)}, 
(59.5,2.5) *{\llcorner},  
(59,6) *{\lrcorner}, 
(71,6) *{\llcorner},
(71,2.2) *{\lrcorner},
(77,4)*{]]},
\endxy
\xy (51,4)*{= [[},
(59,2);(63,6) **@{-}, 
(59,6);(63,2) **@{-}, 
(67,2);(71,6) **@{-},
(67,6);(68.5,4.5) **@{-}, 
(69.5,3.5);(71,2) **@{-}, 
(63,6);(67,6)**\crv{(65,8)}, 
(63,2);(67,2) **\crv{(65,0)}, 
(57,7);(59,6)**\crv{(58,7)}, 
(57,1);(59,2) **\crv{(58,1)}, 
(73,7);(71,6)**\crv{(72,7)}, 
(73,1);(71,2) **\crv{(72,1)}, 
(63,4);(59,4) **@{-}, 
(63,4.1);(59,4.1) **@{-}, 
(63,3.9);(59,3.9)**@{-}, 
(63,4.2);(59,4.2) **@{-}, 
(63,3.8);(59,3.8) **@{-},
(59.5,2.5) *{\llcorner},  
(59,6) *{\lrcorner}, 
(71,6) *{\llcorner},
(71,2.2) *{\lrcorner},
(77,4)*{]]},
\endxy
\end{align*}

The assertions (1) and  (2) follow from  (${\bf K0}$)  and (${\bf K1}$).  

(3) From (\ref{A2-R1-move-1}) and (${\bf L1}$), we see
$$a^{-1} [[D_+]] - a[[D_-]] = (a^{-3} - a^{3})[[D_0]].$$ 
Let $\lambda=w(D_0).$ Then we have
\begin{align*}
a^{-1}a^{8\lambda}[[D_+]] 
- aa^{8\lambda}[[D_-]] 
&= (a^{-3}-a^3)
a^{8\lambda}[[D_0]],\\
a^{-9}a^{8(\lambda+1)}[[D_+]] 
- a^9 a^{8(\lambda-1)}[[D_-]] 
&= (a^{-3}-a^3)
a^{8\lambda}[[D_0]],\\
a^{-9}\ll D_+ \gg - a^9 \ll D_- \gg 
&= (a^{-3}-a^3)\ll D_0 \gg.
\end{align*}

(4) Clearly, 
$w\bigg(~\xy 
(-4,4);(4,-4) **@{-}, 
(4,4);(-4,-4) **@{-}, 
(3,3.2)*{\llcorner}, 
(-3,-3.4)*{\urcorner}, 
(-2.5,2)*{\ulcorner},
(2.5,-2.4)*{\lrcorner}, 
(3,-0.2);(-3,-0.2) **@{-},
(3,0);(-3,0) **@{-}, 
(3,0.2);(-3,0.2) **@{-}, 
\endxy~\bigg)
=w\bigg(~\xy 
(-4,4);(4,4) **\crv{(0,-1)}, 
(4,-4);(-4,-4) **\crv{(0,1)},  
(-2.5,1.9)*{\ulcorner}, 
(2.5,-2.4)*{\lrcorner},   
\endxy~\bigg)
=w\bigg(~\xy 
(-4,4);(-4,-4) **\crv{(1,0)},  
(4,4);(4,-4) **\crv{(-1,0)}, 
(-2.5,1.9)*{\ulcorner}, 
(2.5,-2.4)*{\lrcorner}, 
\endxy~\bigg).$
It follows from (${\bf L2}$) that
\begin{align*}
\ll ~\xy (-4,4);(4,-4) **@{-}, 
(4,4);(-4,-4) **@{-}, 
(3,3.2)*{\llcorner}, 
(-3,-3.4)*{\urcorner}, 
(-2.5,2)*{\ulcorner},
(2.5,-2.4)*{\lrcorner}, 
(3,-0.2);(-3,-0.2) **@{-},
(3,0);(-3,0) **@{-}, 
(3,0.2);(-3,0.2) **@{-}, 
\endxy~\gg 
&=a^{8w(~\xy 
(-4,4);(4,-4) **@{-}, 
(4,4);(-4,-4) **@{-}, 
(3,3.2)*{\llcorner}, 
(-3,-3.4)*{\urcorner}, 
(-2.5,2)*{\ulcorner},
(2.5,-2.4)*{\lrcorner}, 
(3,-0.2);(-3,-0.2) **@{-},
(3,0);(-3,0) **@{-}, 
(3,0.2);(-3,0.2) **@{-}, 
\endxy~)}[[~\xy 
(-4,4);(4,-4) **@{-}, 
(4,4);(-4,-4) **@{-}, 
(3,3.2)*{\llcorner}, 
(-3,-3.4)*{\urcorner}, 
(-2.5,2)*{\ulcorner},
(2.5,-2.4)*{\lrcorner}, 
(3,-0.2);(-3,-0.2) **@{-},
(3,0);(-3,0) **@{-}, 
(3,0.2);(-3,0.2) **@{-}, 
\endxy~]]
=a^{8w(~\xy 
(-4,4);(4,-4) **@{-}, 
(4,4);(-4,-4) **@{-}, 
(3,3.2)*{\llcorner}, 
(-3,-3.4)*{\urcorner}, 
(-2.5,2)*{\ulcorner},
(2.5,-2.4)*{\lrcorner}, 
(3,-0.2);(-3,-0.2) **@{-},
(3,0);(-3,0) **@{-}, 
(3,0.2);(-3,0.2) **@{-}, 
\endxy~)}\bigg(
x[[~\xy 
(-4,4);(4,4) **\crv{(0,-1)}, 
(4,-4);(-4,-4) **\crv{(0,1)},  
(-2.5,1.9)*{\ulcorner}, 
(2.5,-2.4)*{\lrcorner},   
\endxy~]] 
+ y[[~\xy 
(-4,4);(-4,-4) **\crv{(1,0)},  
(4,4);(4,-4) **\crv{(-1,0)}, 
(-2.5,1.9)*{\ulcorner}, 
(2.5,-2.4)*{\lrcorner}, 
\endxy~]]\bigg)\\
&=xa^{8w(~\xy 
(-4,4);(4,-4) **@{-}, 
(4,4);(-4,-4) **@{-}, 
(3,3.2)*{\llcorner}, 
(-3,-3.4)*{\urcorner}, 
(-2.5,2)*{\ulcorner},
(2.5,-2.4)*{\lrcorner}, 
(3,-0.2);(-3,-0.2) **@{-},
(3,0);(-3,0) **@{-}, 
(3,0.2);(-3,0.2) **@{-}, 
\endxy~)}[[~\xy 
(-4,4);(4,4) **\crv{(0,-1)}, 
(4,-4);(-4,-4) **\crv{(0,1)},  
(-2.5,1.9)*{\ulcorner}, 
(2.5,-2.4)*{\lrcorner},   
\endxy~]] 
+ ya^{8w(~\xy 
(-4,4);(4,-4) **@{-}, 
(4,4);(-4,-4) **@{-}, 
(3,3.2)*{\llcorner}, 
(-3,-3.4)*{\urcorner}, 
(-2.5,2)*{\ulcorner},
(2.5,-2.4)*{\lrcorner}, 
(3,-0.2);(-3,-0.2) **@{-},
(3,0);(-3,0) **@{-}, 
(3,0.2);(-3,0.2) **@{-}, 
\endxy~)}[[~\xy 
(-4,4);(-4,-4) **\crv{(1,0)},  
(4,4);(4,-4) **\crv{(-1,0)}, 
(-2.5,1.9)*{\ulcorner}, 
(2.5,-2.4)*{\lrcorner}, 
\endxy~]]\\
&=xa^{8w(~\xy 
(-4,4);(4,4) **\crv{(0,-1)}, 
(4,-4);(-4,-4) **\crv{(0,1)},  
(-2.5,1.9)*{\ulcorner}, 
(2.5,-2.4)*{\lrcorner},   
\endxy~)}[[~\xy 
(-4,4);(4,4) **\crv{(0,-1)}, 
(4,-4);(-4,-4) **\crv{(0,1)},  
(-2.5,1.9)*{\ulcorner}, 
(2.5,-2.4)*{\lrcorner},   
\endxy~]] 
+ ya^{8w(~\xy 
(-4,4);(-4,-4) **\crv{(1,0)},  
(4,4);(4,-4) **\crv{(-1,0)}, 
(-2.5,1.9)*{\ulcorner}, 
(2.5,-2.4)*{\lrcorner}, 
\endxy~)}[[~\xy 
(-4,4);(-4,-4) **\crv{(1,0)},  
(4,4);(4,-4) **\crv{(-1,0)}, 
(-2.5,1.9)*{\ulcorner}, 
(2.5,-2.4)*{\lrcorner}, 
\endxy~]]\\
&=x\ll~\xy 
(-4,4);(4,4) **\crv{(0,-1)}, 
(4,-4);(-4,-4) **\crv{(0,1)},  
(-2.5,1.9)*{\ulcorner}, 
(2.5,-2.4)*{\lrcorner},   
\endxy~\gg 
+ y\ll~\xy 
(-4,4);(-4,-4) **\crv{(1,0)},  
(4,4);(4,-4) **\crv{(-1,0)}, 
(-2.5,1.9)*{\ulcorner}, 
(2.5,-2.4)*{\lrcorner}, 
\endxy~\gg.
\end{align*}
This completes the proof.
\end{proof}


\begin{example} Here are examples of the polynomials for oriented links. 

\begin{itemize}
\item[(1)] 
$
\ll  \xy
(12,2);(10.4,0.4) **@{-}, 
(9.6,-0.4);(8,-2) **@{-},
(12,-2);(8,2) **@{-},
(5.6,-0.4);(4,-2) **@{-}, 
(8,2);(6.6,0.4) **@{-},
(8,-2);(4,2) **@{-},
(4,2);(12,2) **\crv{(8,6)},
(4,-2);(12,-2) **\crv{(8,-6)},
(11,0.5)*{\urcorner}, 
(8.9,0.5)*{\ulcorner}, 
\endxy\gg 
~ =a^{-18}\ll O^2 \gg + 
(a^{-6}-a^{-12})\ll O \gg $ \\ 
$
 = a^{-18}(a^{-6}+1+a^{6})+(a^{-6}-a^{-12}) 
 = a^{-24}+a^{-18}+a^{-6}. 
$

\item[(2)] 
$
\ll \xy
(12,2);(10.4,0.4) **@{-}, 
(9.6,-0.4);(8,-2) **@{-},
(12,-2);(8,2) **@{-},
(5.6,-0.4);(4,-2) **@{-}, 
(8,2);(6.6,0.4) **@{-},
(8,-2);(4,2) **@{-},
(4,2);(12,2) **\crv{(8,6)},
(4,-2);(12,-2) **\crv{(8,-6)}, 
(11,0.5)*{\urcorner}, 
(11,-0.8)*{\lrcorner},  
\endxy\gg
~ = a^{18}\ll O^2 \gg + 
(a^{6}-a^{12}) \ll O \gg $ \\
$  =  a^{18}(a^{-6}+1+a^{6})+(a^{6}-a^{12}) 
 =  a^{24}+a^{18}+a^{6}.
$

\item[(3)]
$
\ll \xy
(12,2);(10.4,0.4) **@{-}, 
(9.6,-0.4);(8,-2) **@{-},
(12,-2);(8,2) **@{-},
(5.6,-0.4);(4,-2) **@{-}, 
(8,2);(6.6,0.4) **@{-},
(8,-2);(4,2) **@{-}, 
(11,0.5)*{\urcorner}, 
(11,-0.8)*{\lrcorner},  
(4,2);(2.4,0.4) **@{-}, 
(1.6,-0.4);(0,-2) **@{-},
(4,-2);(0,2) **@{-},
(0,2);(12,2) **\crv{(5,6)},
(0,-2);(12,-2) **\crv{(5,-6)},
\endxy\gg
 = a^{18}\ll O \gg+
(a^{6}-a^{12})\ll
\xy
(12,2);(10.4,0.4) **@{-}, 
(9.6,-0.4);(8,-2) **@{-},
(12,-2);(8,2) **@{-},
(5.6,-0.4);(4,-2) **@{-}, 
(8,2);(6.6,0.4) **@{-},
(8,-2);(4,2) **@{-},
(4,2);(12,2) **\crv{(8,6)},
(4,-2);(12,-2) **\crv{(8,-6)}, 
(11,0.5)*{\urcorner}, 
(11,-0.8)*{\lrcorner},  
\endxy\gg $ \\
$ =  a^{18}+(a^{6}-a^{12})(a^{24}+a^{18}+a^{6})
 =  a^{12}+a^{24}-a^{36}. 
 $

\item[(4)] 
$ 
\ll  \xy
(12,2);(10.4,0.4) **@{-}, 
(9.6,-0.4);(8,-2) **@{-},
(12,-2);(8,2) **@{-},
(5.6,-0.4);(4,-2) **@{-}, 
(8,2);(6.6,0.4) **@{-},
(8,-2);(4,2) **@{-},
(11,0.5)*{\urcorner}, 
(8.9,0.5)*{\ulcorner}, 
(4,2);(2.4,0.4) **@{-}, 
(1.6,-0.4);(0,-2) **@{-},
(4,-2);(0,2) **@{-},
(-3.6,-0.4);(-4,-2) **@{-}, 
(0,2);(-1.6,0.4) **@{-},
(0,-2);(-4,2) **@{-},
(-2.2,-0.4);(-4,-2) **@{-},
(-4,2);(12,2) **\crv{(5,6)},
(-4,-2);(12,-2) **\crv{(5,-6)},
\endxy\gg
 = a^{-18} \ll \xy
(12,2);(10.4,0.4) **@{-}, 
(9.6,-0.4);(8,-2) **@{-},
(12,-2);(8,2) **@{-},
(5.6,-0.4);(4,-2) **@{-}, 
(8,2);(6.6,0.4) **@{-},
(8,-2);(4,2) **@{-},
(4,2);(12,2) **\crv{(8,6)},
(4,-2);(12,-2) **\crv{(8,-6)},
(11,0.5)*{\urcorner}, (8.9,0.5)*{\ulcorner}, 
\endxy \gg 
+ (a^{-6}-a^{-12}) \ll O \gg $ \\ 
$=  a^{-18}(a^{-24}+a^{-18}+a^{-6})+(a^{-6}-a^{-12})  
 =  a^{-42}+a^{-36}+a^{-24}-a^{-12}+a^{-6}.
 $

\item[(5)] 
$ \ll  \xy
(12,2);(10.4,0.4) **@{-}, 
(9.6,-0.4);(8,-2) **@{-},
(12,-2);(8,2) **@{-},
(5.6,-0.4);(4,-2) **@{-}, 
(8,2);(6.6,0.4) **@{-},
(8,-2);(4,2) **@{-},
(11,0.5)*{\urcorner}, (8.9,0.5)*{\ulcorner}, 
(4,-2);(-2,-2) **\crv{(1,-5)},
(4,4);(2,-2) **\crv{(0,1)},
(-2,2);(1.5,4) **\crv{(-1,4.5)},
(-2,-2);(-2,2) **\crv{(-3,0)},
(4,-4);(12,-2) **\crv{(10,-6)},
(4,4);(12,2) **\crv{(10,6)},
\endxy\gg
 = a^{18} \ll  O \gg
+ (a^6-a^{12}) \ll \xy
(12,2);(10.4,0.4) **@{-}, 
(9.6,-0.4);(8,-2) **@{-},
(12,-2);(8,2) **@{-},
(5.6,-0.4);(4,-2) **@{-}, 
(8,2);(6.6,0.4) **@{-},
(8,-2);(4,2) **@{-},
(4,2);(12,2) **\crv{(8,6)},
(4,-2);(12,-2) **\crv{(8,-6)},
(11,0.5)*{\urcorner}, (8.9,0.5)*{\ulcorner}, 
\endxy\gg $ \\ 
$ =  a^{18} +(a^6-a^{12})(a^{-24}+a^{-18}+a^{-6}) 
 =  a^{-18}-a^{-6}+1-a^{6}+a^{18}. $

\item[(6)] 
$ 
\ll \xy 
(2,0);(4,-2) **@{-}, 
(-2,0);(-4,-2) **@{-},  
(2.4,0.4);(4,2) **@{-},
(0.1,-1.9);(1.7,-0.2) **@{-},  
(0.1,-1.9);(-1.7,-0.3) **@{-},  
(-2.4,0.4);(-4,2) **@{-},
(-8,2);(-6.4,0.4) **@{-}, 
(-5.6,-0.4);(-4,-2) **@{-}, 
(-8,-2);(-4,2) **@{-},
(12,2);(10.4,0.4) **@{-}, 
(9.6,-0.4);(8,-2) **@{-},
(12,-2);(8,2) **@{-},
(5.6,-0.4);(4,-2) **@{-}, 
(8,2);(6.6,0.4) **@{-},
(8,-2);(4,2) **@{-},
(-5,0.4)*{\urcorner}, 
(-5,-0.8)*{\lrcorner}, 
(11,0.5)*{\urcorner}, 
(11,-0.8)*{\lrcorner},  
(-8,2);(-2,4) **\crv{(-10,5)}, 
(2,4);(12,2) **\crv{(14,5)},
(-2,0);(-2,4) **\crv{(1,2)}, 
(2,4);(2,0) **\crv{(-1,2)},
(-8,-2);(0,-5) **\crv{(-10,-5)},
(0,-5);(12,-2) **\crv{(14,-5)},
\endxy\gg
 =a^{-18} \ll
O \sqcup 
\xy
(12,2);(10.4,0.4) **@{-}, 
(9.6,-0.4);(8,-2) **@{-},
(12,-2);(8,2) **@{-},
(5.6,-0.4);(4,-2) **@{-}, 
(8,2);(6.6,0.4) **@{-},
(8,-2);(4,2) **@{-},
(11,0.5)*{\urcorner}, 
(11,-0.8)*{\lrcorner},  
(4,2);(2.4,0.4) **@{-}, 
(1.6,-0.4);(0,-2) **@{-},
(4,-2);(0,2) **@{-},
(0,2);(12,2) **\crv{(5,6)},
(0,-2);(12,-2) **\crv{(5,-6)},
\endxy\gg + (a^{-6}-a^{-12}) 
\ll \xy
(12,2);(10.4,0.4) **@{-}, 
(9.6,-0.4);(8,-2) **@{-},
(12,-2);(8,2) **@{-},
(5.6,-0.4);(4,-2) **@{-}, 
(8,2);(6.6,0.4) **@{-},
(8,-2);(4,2) **@{-},
(11,0.5)*{\urcorner}, 
(11,-0.8)*{\lrcorner},  
(4,2);(2.4,0.4) **@{-}, 
(1.6,-0.4);(0,-2) **@{-},
(4,-2);(0,2) **@{-},
(0,2);(12,2) **\crv{(5,6)},
(0,-2);(12,-2) **\crv{(5,-6)},
\endxy \gg $ \\
$ =  (a^{-18}(a^{-6}+1+a^{6})+a^{-6}-a^{-12})(a^{12}+a^{24}-a^{36})$ \\ 
$ =  (a^{-24}+a^{-18}+a^{-6})(a^{12}+a^{24}-a^{36}). $

\item[(7)] 
$ \ll \xy 
(2,0);(4,-2) **@{-}, 
(-2,0);(-4,-2) **@{-},  
(2.4,0.4);(4,2) **@{-},
(0.1,-1.9);(1.7,-0.2) **@{-},  
(0.1,-1.9);(-1.7,-0.3) **@{-},  
(-2.4,0.4);(-4,2) **@{-},
(-12,2);(-10.4,0.4) **@{-}, 
(-9.6,-0.4);(-8,-2) **@{-}, 
(-12,-2);(-8,2) **@{-},
(-8,2);(-6.4,0.4) **@{-}, 
(-5.6,-0.4);(-4,-2) **@{-}, 
(-8,-2);(-4,2) **@{-},
(12,2);(10.4,0.4) **@{-}, 
(9.6,-0.4);(8,-2) **@{-},
(12,-2);(8,2) **@{-},
(5.6,-0.4);(4,-2) **@{-}, 
(8,2);(6.6,0.4) **@{-},
(8,-2);(4,2) **@{-},
(-9,0.4)*{\urcorner}, 
(-9,-0.8)*{\lrcorner}, 
(11,0.5)*{\urcorner}, 
(11,-0.8)*{\lrcorner},  
(-12,2);(-2,4) **\crv{(-14,5)}, 
(2,4);(12,2) **\crv{(14,5)},
(-2,0);(-2,4) **\crv{(1,2)}, 
(2,4);(2,0) **\crv{(-1,2)},
(-12,-2);(0,-5) **\crv{(-14,-5)},
(0,-5);(12,-2) **\crv{(14,-5)},
 \endxy\gg
=a^{-18}\ll\xy
(12,2);(10.4,0.4) **@{-}, 
(9.6,-0.4);(8,-2) **@{-},
(12,-2);(8,2) **@{-},
(5.6,-0.4);(4,-2) **@{-}, 
(8,2);(6.6,0.4) **@{-},
(8,-2);(4,2) **@{-},
(11,0.5)*{\urcorner}, 
(11,-0.8)*{\lrcorner},  
(4,2);(2.4,0.4) **@{-}, 
(1.6,-0.4);(0,-2) **@{-},
(4,-2);(0,2) **@{-},
(0,2);(12,2) **\crv{(5,6)},
(0,-2);(12,-2) **\crv{(5,-6)},
\endxy\gg
+(a^{-6}-a^{-12})\ll\xy 
(2,0);(4,-2) **@{-}, 
(-2,0);(-4,-2) **@{-},  
(2.4,0.4);(4,2) **@{-},
(0.1,-1.9);(1.7,-0.2) **@{-},  
(0.1,-1.9);(-1.7,-0.3) **@{-},  
(-2.4,0.4);(-4,2) **@{-},
(-8,2);(-6.4,0.4) **@{-}, 
(-5.6,-0.4);(-4,-2) **@{-}, 
(-8,-2);(-4,2) **@{-},
(12,2);(10.4,0.4) **@{-}, 
(9.6,-0.4);(8,-2) **@{-},
(12,-2);(8,2) **@{-},
(5.6,-0.4);(4,-2) **@{-}, 
(8,2);(6.6,0.4) **@{-},
(8,-2);(4,2) **@{-},
(-5,0.4)*{\urcorner}, 
(-5,-0.8)*{\lrcorner}, 
(11,0.5)*{\urcorner}, 
(11,-0.8)*{\lrcorner},  
(-8,2);(-2,4) **\crv{(-10,5)}, 
(2,4);(12,2) **\crv{(14,5)},
(-2,0);(-2,4) **\crv{(1,2)}, 
(2,4);(2,0) **\crv{(-1,2)},
(-8,-2);(0,-5) **\crv{(-10,-5)},
(0,-5);(12,-2) **\crv{(14,-5)},
 \endxy\gg $ \\ 
$ =  a^{-18}(a^{12}+a^{24}-a^{36})  
 + (a^{-6}-a^{-12})(a^{-24}+a^{-18}+a^{-6})(a^{12}+a^{24}-a^{36}) $ \\
$  = (a^{-24}+a^{-12}-a^{-36})(a^{12}+a^{24}-a^{36}). $ 
\end{itemize}

\end{example}


\begin{example}\label{examp-8_1-1}
Consider the diagram $8_1$ of a 
spun $2$-knot of the trefoil in Yoshikawa's table \cite{Yo} with the orientation indicated below. From Theorem \ref{thm-skein-rel}, it follows that
\begin{align*}
\ll~&\xy 
(-12,8);(-2,8) **@{-}, 
(2,8);(12,8) **@{-}, 
(-12,8);(-12,2) **@{-}, 
(12,8);(12,2) **@{-}, 
(-12,-8);(-2,-8) **@{-}, 
(-2,-8);(12,-8) **@{-}, 
(-12,-8);(-12,-2) **@{-}, 
(12,-8);(12,-2) **@{-},
(-2,8);(2,4) **@{-}, 
(-2,4);(2,8) **@{-}, 
(-2,4);(4,-2) **@{-}, 
(2,4);(-4,-2) **@{-}, 
(2.4,0.4);(4,2) **@{-},
(0.1,-1.9);(1.7,-0.2) **@{-},  
(0.1,-1.9);(-1.7,-0.3) **@{-},  
(-2.4,0.4);(-4,2) **@{-},
(-12,2);(-10.4,0.4) **@{-}, 
(-9.6,-0.4);(-8,-2) **@{-}, 
(-12,-2);(-8,2) **@{-},
(-8,2);(-6.4,0.4) **@{-}, 
(-5.6,-0.4);(-4,-2) **@{-}, 
(-8,-2);(-4,2) **@{-},
(12,2);(10.4,0.4) **@{-}, 
(9.6,-0.4);(8,-2) **@{-},
(12,-2);(8,2) **@{-},
(5.6,-0.4);(4,-2) **@{-}, 
(8,2);(6.6,0.4) **@{-},
(8,-2);(4,2) **@{-},
(-0.1,4.5);(-0.1,7.5) **@{-},
(0,4.5);(0,7.5) **@{-},
(0.1,4.5);(0.1,7.5) **@{-},
(-1.5,1.9);(1.5,1.9) **@{-},
(-1.5,2);(1.5,2) **@{-},
(-1.5,2.1);(1.5,2.1) **@{-}, 
(-9,0.4)*{\urcorner}, 
(-9,-0.8)*{\lrcorner}, 
(11,0.5)*{\urcorner}, 
(11,-0.8)*{\lrcorner}, 
(0,12) *{8_1},
\endxy\gg 
= x^2
\ll~\xy 
(-12,8);(-2,8) **@{-}, 
(2,8);(12,8) **@{-}, 
(-12,8);(-12,2) **@{-}, 
(12,8);(12,2) **@{-}, 
(-12,-8);(-2,-8) **@{-}, 
(-2,-8);(12,-8) **@{-}, 
(-12,-8);(-12,-2) **@{-}, 
(12,-8);(12,-2) **@{-},
(2,0);(4,-2) **@{-}, 
(-2,0);(-4,-2) **@{-}, 
(2.4,0.4);(4,2) **@{-},
(0.1,-1.9);(1.7,-0.2) **@{-},  
(0.1,-1.9);(-1.7,-0.3) **@{-},  
(-2.4,0.4);(-4,2) **@{-},
(-12,2);(-10.4,0.4) **@{-}, 
(-9.6,-0.4);(-8,-2) **@{-}, 
(-12,-2);(-8,2) **@{-},
(-8,2);(-6.4,0.4) **@{-}, 
(-5.6,-0.4);(-4,-2) **@{-}, 
(-8,-2);(-4,2) **@{-},
(12,2);(10.4,0.4) **@{-}, 
(9.6,-0.4);(8,-2) **@{-},
(12,-2);(8,2) **@{-},
(5.6,-0.4);(4,-2) **@{-}, 
(8,2);(6.6,0.4) **@{-},
(8,-2);(4,2) **@{-},
(-9,0.4)*{\urcorner}, 
(-9,-0.8)*{\lrcorner}, 
(11,0.5)*{\urcorner}, 
(11,-0.8)*{\lrcorner}, 
(-2,4);(-2,8) **\crv{(1,6)}, 
(2,8);(2,4) **\crv{(-1,6)},
(-2,4);(2,4) **\crv{(0,1)}, 
(-2,0);(2,0) **\crv{(0,3)},
 \endxy\gg 
 +xy
 \ll~\xy 
(-12,8);(-2,8) **@{-}, 
(2,8);(12,8) **@{-}, 
(-12,8);(-12,2) **@{-}, 
(12,8);(12,2) **@{-}, 
(-12,-8);(-2,-8) **@{-}, 
(-2,-8);(12,-8) **@{-}, 
(-12,-8);(-12,-2) **@{-}, 
(12,-8);(12,-2) **@{-},
(2,0);(4,-2) **@{-}, 
(-2,0);(-4,-2) **@{-},  
(2.4,0.4);(4,2) **@{-},
(0.1,-1.9);(1.7,-0.2) **@{-},  
(0.1,-1.9);(-1.7,-0.3) **@{-},  
(-2.4,0.4);(-4,2) **@{-},
(-12,2);(-10.4,0.4) **@{-}, 
(-9.6,-0.4);(-8,-2) **@{-}, 
(-12,-2);(-8,2) **@{-},
(-8,2);(-6.4,0.4) **@{-}, 
(-5.6,-0.4);(-4,-2) **@{-}, 
(-8,-2);(-4,2) **@{-},
(12,2);(10.4,0.4) **@{-}, 
(9.6,-0.4);(8,-2) **@{-},
(12,-2);(8,2) **@{-},
(5.6,-0.4);(4,-2) **@{-}, 
(8,2);(6.6,0.4) **@{-},
(8,-2);(4,2) **@{-},
(-9,0.4)*{\urcorner}, 
(-9,-0.8)*{\lrcorner}, 
(11,0.5)*{\urcorner}, 
(11,-0.8)*{\lrcorner}, 
(-2,4);(-2,8) **\crv{(1,6)}, 
(2,8);(2,4) **\crv{(-1,6)},
(-2,0);(-2,4) **\crv{(1,2)}, 
(2,4);(2,0) **\crv{(-1,2)},
 \endxy\gg \\
 &\hskip 3.3cm + yx
 \ll~\xy 
(-12,8);(-2,8) **@{-}, 
(2,8);(12,8) **@{-}, 
(-12,8);(-12,2) **@{-}, 
(12,8);(12,2) **@{-}, 
(-12,-8);(-2,-8) **@{-}, 
(-2,-8);(12,-8) **@{-}, 
(-12,-8);(-12,-2) **@{-}, 
(12,-8);(12,-2) **@{-},
(2,0);(4,-2) **@{-}, 
(-2,0);(-4,-2) **@{-}, 
(2.4,0.4);(4,2) **@{-},
(0.1,-1.9);(1.7,-0.2) **@{-},  
(0.1,-1.9);(-1.7,-0.3) **@{-},  
(-2.4,0.4);(-4,2) **@{-},
(-12,2);(-10.4,0.4) **@{-}, 
(-9.6,-0.4);(-8,-2) **@{-}, 
(-12,-2);(-8,2) **@{-},
(-8,2);(-6.4,0.4) **@{-}, 
(-5.6,-0.4);(-4,-2) **@{-}, 
(-8,-2);(-4,2) **@{-},
(12,2);(10.4,0.4) **@{-}, 
(9.6,-0.4);(8,-2) **@{-},
(12,-2);(8,2) **@{-},
(5.6,-0.4);(4,-2) **@{-}, 
(8,2);(6.6,0.4) **@{-},
(8,-2);(4,2) **@{-},
(-9,0.4)*{\urcorner}, 
(-9,-0.8)*{\lrcorner}, 
(11,0.5)*{\urcorner}, 
(11,-0.8)*{\lrcorner},  
(-2,8);(2,8) **\crv{(0,5)}, 
(-2,4);(2,4) **\crv{(0,7)},
(-2,4);(2,4) **\crv{(0,1)}, 
(-2,0);(2,0) **\crv{(0,3)},
 \endxy\gg + y^2
 \ll~\xy 
(-12,8);(-2,8) **@{-}, 
(2,8);(12,8) **@{-}, 
(-12,8);(-12,2) **@{-}, 
(12,8);(12,2) **@{-}, 
(-12,-8);(-2,-8) **@{-}, 
(-2,-8);(12,-8) **@{-}, 
(-12,-8);(-12,-2) **@{-}, 
(12,-8);(12,-2) **@{-},
(2,0);(4,-2) **@{-}, 
(-2,0);(-4,-2) **@{-},  
(2.4,0.4);(4,2) **@{-},
(0.1,-1.9);(1.7,-0.2) **@{-},  
(0.1,-1.9);(-1.7,-0.3) **@{-},  
(-2.4,0.4);(-4,2) **@{-},
(-12,2);(-10.4,0.4) **@{-}, 
(-9.6,-0.4);(-8,-2) **@{-}, 
(-12,-2);(-8,2) **@{-},
(-8,2);(-6.4,0.4) **@{-}, 
(-5.6,-0.4);(-4,-2) **@{-}, 
(-8,-2);(-4,2) **@{-},
(12,2);(10.4,0.4) **@{-}, 
(9.6,-0.4);(8,-2) **@{-},
(12,-2);(8,2) **@{-},
(5.6,-0.4);(4,-2) **@{-}, 
(8,2);(6.6,0.4) **@{-},
(8,-2);(4,2) **@{-},
(-9,0.4)*{\urcorner}, 
(-9,-0.8)*{\lrcorner}, 
(11,0.5)*{\urcorner}, 
(11,-0.8)*{\lrcorner}, 
(-2,0);(-2,4) **\crv{(1,2)}, 
(2,4);(2,0) **\crv{(-1,2)},
(-2,8);(2,8) **\crv{(0,5)}, 
(-2,4);(2,4) **\crv{(0,7)},
 \endxy\gg\\
= & x^2 \ll  O^2 \gg + xy \ll \xy 
(2,0);(4,-2) **@{-}, 
(-2,0);(-4,-2) **@{-},  
(2.4,0.4);(4,2) **@{-},
(0.1,-1.9);(1.7,-0.2) **@{-},  
(0.1,-1.9);(-1.7,-0.3) **@{-},  
(-2.4,0.4);(-4,2) **@{-},
(-12,2);(-10.4,0.4) **@{-}, 
(-9.6,-0.4);(-8,-2) **@{-}, 
(-12,-2);(-8,2) **@{-},
(-8,2);(-6.4,0.4) **@{-}, 
(-5.6,-0.4);(-4,-2) **@{-}, 
(-8,-2);(-4,2) **@{-},
(12,2);(10.4,0.4) **@{-}, 
(9.6,-0.4);(8,-2) **@{-},
(12,-2);(8,2) **@{-},
(5.6,-0.4);(4,-2) **@{-}, 
(8,2);(6.6,0.4) **@{-},
(8,-2);(4,2) **@{-},
(-9,0.4)*{\urcorner}, 
(-9,-0.8)*{\lrcorner}, 
(11,0.5)*{\urcorner}, 
(11,-0.8)*{\lrcorner}, 
(-12,2);(-2,4) **\crv{(-14,5)}, 
(2,4);(12,2) **\crv{(14,5)},
(-2,0);(-2,4) **\crv{(1,2)}, 
(2,4);(2,0) **\crv{(-1,2)},
(-12,-2);(0,-5) **\crv{(-14,-5)},
(0,-5);(12,-2) **\crv{(14,-5)},
 \endxy\gg 
+ yx \ll O^3 \gg + y^2 \ll O^2 \gg\\
= & (a^{-6}+1+a^{6})(x^2+y^2)  + (a^{-24}+a^{-12}-a^{-36})(a^{12}+a^{24}-a^{36}) xy \\
& + (a^{-6}+1+a^{6})^2 xy.
 \end{align*}
 \end{example}

\begin{example}\label{examp-9_1-1}
Consider the diagram $9_1$ of a 
ribbon $2$-knot  associated with $6_1$ knot in Yoshikawa's table \cite{Yo} with the orientation indicated below.  
\begin{align*}
\ll~&\xy 
(-2,12);(2,8) **@{-}, 
(-2,8);(-0.7,9.5) **@{-},
(0.7,10.6);(2,12) **@{-},
(-12,12);(-2,12) **@{-}, 
(2,12);(12,12) **@{-}, 
(-12,12);(-12,2) **@{-}, 
(12,12);(12,2) **@{-},
(-12,-8);(-2,-8) **@{-}, 
(-2,-8);(12,-8) **@{-}, 
(-12,-8);(-12,-2) **@{-}, 
(12,-8);(12,-2) **@{-},
(-2,8);(2,4) **@{-}, 
(-2,4);(2,8) **@{-}, 
(-2,4);(4,-2) **@{-}, 
(2,4);(-4,-2) **@{-}, 
(2.4,0.4);(4,2) **@{-},
(0.1,-1.9);(1.7,-0.2) **@{-},  
(0.1,-1.9);(-1.7,-0.3) **@{-},  
(-2.4,0.4);(-4,2) **@{-},
(-12,2);(-10.4,0.4) **@{-}, 
(-9.6,-0.4);(-8,-2) **@{-}, 
(-12,-2);(-8,2) **@{-},
(-8,2);(-6.4,0.4) **@{-}, 
(-5.6,-0.4);(-4,-2) **@{-}, 
(-8,-2);(-4,2) **@{-},
(12,2);(10.4,0.4) **@{-}, 
(9.6,-0.4);(8,-2) **@{-},
(12,-2);(8,2) **@{-},
(5.6,-0.4);(4,-2) **@{-}, 
(8,2);(6.6,0.4) **@{-},
(8,-2);(4,2) **@{-},
(-0.1,4.5);(-0.1,7.5) **@{-},
(0,4.5);(0,7.5) **@{-},
(0.1,4.5);(0.1,7.5) **@{-},
(-1.5,1.9);(1.5,1.9) **@{-},
(-1.5,2);(1.5,2) **@{-},
(-1.5,2.1);(1.5,2.1) **@{-},
(-3,-1.2)*{\llcorner}, 
(-11,-1.2)*{\llcorner}, 
(-8.9,-1.2)*{\lrcorner}, 
(11,0.5)*{\urcorner}, 
(8.9,0.5)*{\ulcorner}, 
(0,16) *{9_1},
\endxy\gg 
= x^2
\ll~\xy 
(-2,12);(2,8) **@{-}, 
(-2,8);(-0.7,9.5) **@{-},
(0.7,10.6);(2,12) **@{-},
(-12,12);(-2,12) **@{-}, 
(2,12);(12,12) **@{-}, 
(-12,12);(-12,2) **@{-}, 
(12,12);(12,2) **@{-},
(-12,-8);(-2,-8) **@{-}, 
(-2,-8);(12,-8) **@{-}, 
(-12,-8);(-12,-2) **@{-}, 
(12,-8);(12,-2) **@{-},
(2,0);(4,-2) **@{-}, 
(-2,0);(-4,-2) **@{-}, 
(2.4,0.4);(4,2) **@{-},
(0.1,-1.9);(1.7,-0.2) **@{-},  
(0.1,-1.9);(-1.7,-0.3) **@{-},  
(-2.4,0.4);(-4,2) **@{-},
(-12,2);(-10.4,0.4) **@{-}, 
(-9.6,-0.4);(-8,-2) **@{-}, 
(-12,-2);(-8,2) **@{-},
(-8,2);(-6.4,0.4) **@{-}, 
(-5.6,-0.4);(-4,-2) **@{-}, 
(-8,-2);(-4,2) **@{-},
(12,2);(10.4,0.4) **@{-}, 
(9.6,-0.4);(8,-2) **@{-},
(12,-2);(8,2) **@{-},
(5.6,-0.4);(4,-2) **@{-}, 
(8,2);(6.6,0.4) **@{-},
(8,-2);(4,2) **@{-},
(-3,-1.2)*{\llcorner}, (-11,-1.2)*{\llcorner}, (-8.9,-1.2)*{\lrcorner}, (11,0.5)*{\urcorner}, (8.9,0.5)*{\ulcorner}, 
(-2,4);(-2,8) **\crv{(1,6)}, 
(2,8);(2,4) **\crv{(-1,6)},
(-2,4);(2,4) **\crv{(0,1)}, 
(-2,0);(2,0) **\crv{(0,3)},
 \endxy\gg 
 +xy
 \ll~\xy 
(-2,12);(2,8) **@{-}, 
(-2,8);(-0.7,9.5) **@{-},
(0.7,10.6);(2,12) **@{-},
(-12,12);(-2,12) **@{-}, 
(2,12);(12,12) **@{-}, 
(-12,12);(-12,2) **@{-}, 
(12,12);(12,2) **@{-},
(-12,-8);(-2,-8) **@{-}, 
(-2,-8);(12,-8) **@{-}, 
(-12,-8);(-12,-2) **@{-}, 
(12,-8);(12,-2) **@{-},
(2,0);(4,-2) **@{-}, 
(-2,0);(-4,-2) **@{-},  
(2.4,0.4);(4,2) **@{-},
(0.1,-1.9);(1.7,-0.2) **@{-},  
(0.1,-1.9);(-1.7,-0.3) **@{-},  
(-2.4,0.4);(-4,2) **@{-},
(-12,2);(-10.4,0.4) **@{-}, 
(-9.6,-0.4);(-8,-2) **@{-}, 
(-12,-2);(-8,2) **@{-},
(-8,2);(-6.4,0.4) **@{-}, 
(-5.6,-0.4);(-4,-2) **@{-}, 
(-8,-2);(-4,2) **@{-},
(12,2);(10.4,0.4) **@{-}, 
(9.6,-0.4);(8,-2) **@{-},
(12,-2);(8,2) **@{-},
(5.6,-0.4);(4,-2) **@{-}, 
(8,2);(6.6,0.4) **@{-},
(8,-2);(4,2) **@{-},
(-3,-1.2)*{\llcorner}, (-11,-1.2)*{\llcorner}, (-8.9,-1.2)*{\lrcorner}, (11,0.5)*{\urcorner}, (8.9,0.5)*{\ulcorner}, 
(-2,4);(-2,8) **\crv{(1,6)}, 
(2,8);(2,4) **\crv{(-1,6)},
(-2,0);(-2,4) **\crv{(1,2)}, 
(2,4);(2,0) **\crv{(-1,2)},
 \endxy\gg \\
 &\hskip 3.3cm + yx
 \ll~\xy 
(-2,12);(2,8) **@{-}, 
(-2,8);(-0.7,9.5) **@{-},
(0.7,10.6);(2,12) **@{-},
(-12,12);(-2,12) **@{-}, 
(2,12);(12,12) **@{-}, 
(-12,12);(-12,2) **@{-}, 
(12,12);(12,2) **@{-},
(-12,-8);(-2,-8) **@{-}, 
(-2,-8);(12,-8) **@{-}, 
(-12,-8);(-12,-2) **@{-}, 
(12,-8);(12,-2) **@{-},
(2,0);(4,-2) **@{-}, 
(-2,0);(-4,-2) **@{-}, 
(2.4,0.4);(4,2) **@{-},
(0.1,-1.9);(1.7,-0.2) **@{-},  
(0.1,-1.9);(-1.7,-0.3) **@{-},  
(-2.4,0.4);(-4,2) **@{-},
(-12,2);(-10.4,0.4) **@{-}, 
(-9.6,-0.4);(-8,-2) **@{-}, 
(-12,-2);(-8,2) **@{-},
(-8,2);(-6.4,0.4) **@{-}, 
(-5.6,-0.4);(-4,-2) **@{-}, 
(-8,-2);(-4,2) **@{-},
(12,2);(10.4,0.4) **@{-}, 
(9.6,-0.4);(8,-2) **@{-},
(12,-2);(8,2) **@{-},
(5.6,-0.4);(4,-2) **@{-}, 
(8,2);(6.6,0.4) **@{-},
(8,-2);(4,2) **@{-},
(-3,-1.2)*{\llcorner}, 
(-11,-1.2)*{\llcorner}, 
(-8.9,-1.2)*{\lrcorner}, 
(11,0.5)*{\urcorner}, 
(8.9,0.5)*{\ulcorner}, 
(-2,8);(2,8) **\crv{(0,5)}, 
(-2,4);(2,4) **\crv{(0,7)},
(-2,4);(2,4) **\crv{(0,1)}, 
(-2,0);(2,0) **\crv{(0,3)},
 \endxy\gg + y^2
 \ll~\xy 
(-2,12);(2,8) **@{-}, 
(-2,8);(-0.7,9.5) **@{-},
(0.7,10.6);(2,12) **@{-},
(-12,12);(-2,12) **@{-}, 
(2,12);(12,12) **@{-}, 
(-12,12);(-12,2) **@{-}, 
(12,12);(12,2) **@{-},
(-12,-8);(-2,-8) **@{-}, 
(-2,-8);(12,-8) **@{-}, 
(-12,-8);(-12,-2) **@{-}, 
(12,-8);(12,-2) **@{-},
(2,0);(4,-2) **@{-}, 
(-2,0);(-4,-2) **@{-},  
(2.4,0.4);(4,2) **@{-},
(0.1,-1.9);(1.7,-0.2) **@{-},  
(0.1,-1.9);(-1.7,-0.3) **@{-},  
(-2.4,0.4);(-4,2) **@{-},
(-12,2);(-10.4,0.4) **@{-}, 
(-9.6,-0.4);(-8,-2) **@{-}, 
(-12,-2);(-8,2) **@{-},
(-8,2);(-6.4,0.4) **@{-}, 
(-5.6,-0.4);(-4,-2) **@{-}, 
(-8,-2);(-4,2) **@{-},
(12,2);(10.4,0.4) **@{-}, 
(9.6,-0.4);(8,-2) **@{-},
(12,-2);(8,2) **@{-},
(5.6,-0.4);(4,-2) **@{-}, 
(8,2);(6.6,0.4) **@{-},
(8,-2);(4,2) **@{-},
(-3,-1.2)*{\llcorner}, (-11,-1.2)*{\llcorner}, (-8.9,-1.2)*{\lrcorner}, (11,0.5)*{\urcorner}, (8.9,0.5)*{\ulcorner}, 
(-2,0);(-2,4) **\crv{(1,2)}, 
(2,4);(2,0) **\crv{(-1,2)},
(-2,8);(2,8) **\crv{(0,5)}, 
(-2,4);(2,4) **\crv{(0,7)},
 \endxy\gg\\
= & x^2 \ll O^2 \gg 
+ xy \bigg[a^{18}
\ll\xy
(12,2);(10.4,0.4) **@{-}, 
(9.6,-0.4);(8,-2) **@{-},
(12,-2);(8,2) **@{-},
(5.6,-0.4);(4,-2) **@{-}, 
(8,2);(6.6,0.4) **@{-},
(8,-2);(4,2) **@{-},
(11,0.5)*{\urcorner}, (8.9,0.5)*{\ulcorner}, 
(4,-2);(-2,-2) **\crv{(1,-5)},
(4,4);(2,-2) **\crv{(0,1)},
(-2,2);(1.5,4) **\crv{(-1,4.5)},
(-2,-2);(-2,2) **\crv{(-3,0)},
(4,-4);(12,-2) **\crv{(10,-6)},
(4,4);(12,2) **\crv{(10,6)},
\endxy
\gg 
+ (a^6-a^{12})\ll
\xy
(12,2);(10.4,0.4) **@{-}, 
(9.6,-0.4);(8,-2) **@{-},
(12,-2);(8,2) **@{-},
(5.6,-0.4);(4,-2) **@{-}, 
(8,2);(6.6,0.4) **@{-},
(8,-2);(4,2) **@{-},
(4,2);(12,2) **\crv{(8,6)},
(4,-2);(12,-2) **\crv{(8,-6)},
(11,0.5)*{\urcorner}, (8.9,0.5)*{\ulcorner}, 
\endxy
\gg\bigg]\\
&  + yx \ll O^3 \gg
+ y^2 \ll  O^2 \gg\\
= &  (a^{-6}+1+a^{6})(x^2+y^2)+(a^{-6}+1+a^{6})^2 xy \\
& + \bigg[a^{18}(a^{-18}-a^{-6}+1-a^{6}+a^{18}) +(a^6-a^{12})(a^{-24}+a^{-18}+a^{-6})\bigg] xy\\ 
= &  (a^{-6}+1+a^{6})(x^2+y^2) \\
& +  (a^{-18}+a^{-12}+a^{-6}+5+a^6+a^{18}-a^{24}+a^{36}) xy.\\
 \end{align*}

\end{example}

\begin{example}\label{examp-10_2-1} 
Consider the diagram $10_2$ of 
a $2$-twist spun $2$-knot  of the trefoil in Yoshikawa's table \cite{Yo} with the orientation indicated below.  
\begin{align*}
\ll~&\xy (-12,8);(-2,8) **@{-}, (2,8);(12,8) **@{-}, (-12,8);(-12,2) **@{-}, 
(12,8);(12,2) **@{-}, (-12,-8);(-2,-8) **@{-}, (2,-8);(12,-8) **@{-}, 
(-12,-8);(-12,-2) **@{-}, (12,-8);(12,-2) **@{-},
(-2,8);(2,4) **@{-}, (-2,4);(-0.4,5.4) **@{-},
(0.4,6.4);(2,8) **@{-}, (-2,4);(4,-2) **@{-}, (2,4);(-4,-2) **@{-}, (-2,-8);(2,-4) **@{-}, (-2,-4);(2,-8) **@{-},
(-2,-4);(1.4,-0.4) **@{-}, (2.4,0.4);(4,2) **@{-},
(2,-4);(0.6,-2.4) **@{-}, (-0.4,-1.6);(-1.7,-0.3) **@{-},  (-2.4,0.4);(-4,2) **@{-},
(-12,2);(-10.4,0.4) **@{-}, (-9.6,-0.4);(-8,-2) **@{-}, (-12,-2);(-8,2) **@{-},
(-8,2);(-6.4,0.4) **@{-}, (-5.6,-0.4);(-4,-2) **@{-}, (-8,-2);(-4,2) **@{-},
(12,2);(10.4,0.4) **@{-}, (9.6,-0.4);(8,-2) **@{-},
(12,-2);(8,2) **@{-},
(5.6,-0.4);(4,-2) **@{-}, (8,2);(6.6,0.4) **@{-},
(8,-2);(4,2) **@{-},
(-1.5,1.9);(1.5,1.9) **@{-},
(-1.5,2);(1.5,2) **@{-},
(-1.5,2.1);(1.5,2.1) **@{-},
(-1.5,-5.9);(1.5,-5.9) **@{-},
(-1.5,-6);(1.5,-6) **@{-},
(-1.5,-6.1);(1.5,-6.1) **@{-},
(-1,6.4)*{\ulcorner}, (-1.4,-8)*{\urcorner}, (-3,-1.2)*{\llcorner}, (-11,-1.2)*{\llcorner}, (-8.9,-1.2)*{\lrcorner}, (11,0.5)*{\urcorner}, (8.9,0.5)*{\ulcorner}, (1,-3)*{\lrcorner},
(0,12) *{10_2},
 \endxy\gg 
 = x^2 \ll~\xy (-12,8);(-2,8) **@{-}, (2,8);(12,8) **@{-}, (-12,8);(-12,2) **@{-}, (12,8);(12,2) **@{-}, (-12,-8);(-2,-8) **@{-}, (2,-8);(12,-8) **@{-}, (-12,-8);(-12,-2) **@{-}, (12,-8);(12,-2) **@{-},
(-2,8);(2,4) **@{-}, (-2,4);(-0.4,5.4) **@{-},
(0.4,6.4);(2,8) **@{-}, (2,0);(4,-2) **@{-}, (-2,0);(-4,-2) **@{-},
(-2,-4);(1.4,-0.4) **@{-}, (2.4,0.4);(4,2) **@{-},
(2,-4);(0.6,-2.4) **@{-}, (-0.4,-1.6);(-1.7,-0.3) **@{-},  (-2.4,0.4);(-4,2) **@{-},
(-12,2);(-10.4,0.4) **@{-}, (-9.6,-0.4);(-8,-2) **@{-}, (-12,-2);(-8,2) **@{-},
(-8,2);(-6.4,0.4) **@{-}, (-5.6,-0.4);(-4,-2) **@{-}, (-8,-2);(-4,2) **@{-},
(12,2);(10.4,0.4) **@{-}, (9.6,-0.4);(8,-2) **@{-},
(12,-2);(8,2) **@{-},
(5.6,-0.4);(4,-2) **@{-}, (8,2);(6.6,0.4) **@{-},
(8,-2);(4,2) **@{-},
(-1,6.4)*{\ulcorner}, (-1.4,-8)*{\urcorner}, (-3,-1.2)*{\llcorner}, (-11,-1.2)*{\llcorner}, (-8.9,-1.2)*{\lrcorner}, (11,0.5)*{\urcorner}, (8.9,0.5)*{\ulcorner}, (1,-3)*{\lrcorner},
(-2,4);(2,4) **\crv{(0,1)}, (-2,0);(2,0) **\crv{(0,3)},
(-2,-4);(2,-4) **\crv{(0,-7)}, (-2,-8);(2,-8) **\crv{(0,-5)},
 \endxy~\gg
 + xy \ll~\xy (-12,8);(-2,8) **@{-}, (2,8);(12,8) **@{-}, (-12,8);(-12,2) **@{-}, (12,8);(12,2) **@{-}, (-12,-8);(-2,-8) **@{-}, (2,-8);(12,-8) **@{-}, (-12,-8);(-12,-2) **@{-}, (12,-8);(12,-2) **@{-},
(-2,8);(2,4) **@{-}, (-2,4);(-0.4,5.4) **@{-},
(0.4,6.4);(2,8) **@{-}, (2,0);(4,-2) **@{-}, (-2,0);(-4,-2) **@{-},
(-2,-4);(1.4,-0.4) **@{-}, (2.4,0.4);(4,2) **@{-},
(2,-4);(0.6,-2.4) **@{-}, (-0.4,-1.6);(-1.7,-0.3) **@{-},  (-2.4,0.4);(-4,2) **@{-},
(-12,2);(-10.4,0.4) **@{-}, (-9.6,-0.4);(-8,-2) **@{-}, (-12,-2);(-8,2) **@{-},
(-8,2);(-6.4,0.4) **@{-}, (-5.6,-0.4);(-4,-2) **@{-}, (-8,-2);(-4,2) **@{-},
(12,2);(10.4,0.4) **@{-}, (9.6,-0.4);(8,-2) **@{-},
(12,-2);(8,2) **@{-},
(5.6,-0.4);(4,-2) **@{-}, (8,2);(6.6,0.4) **@{-},
(8,-2);(4,2) **@{-},
(-1,6.4)*{\ulcorner}, (-1.4,-8)*{\urcorner}, (-3,-1.2)*{\llcorner}, (-11,-1.2)*{\llcorner}, (-8.9,-1.2)*{\lrcorner}, (11,0.5)*{\urcorner}, (8.9,0.5)*{\ulcorner}, (1,-3)*{\lrcorner},
(-2,4);(2,4) **\crv{(0,1)}, (-2,0);(2,0) **\crv{(0,3)},
(-2,-4);(-2,-8) **\crv{(1,-6)}, (2,-8);(2,-4) **\crv{(-1,-6)},
 \endxy~\gg\\
&\hskip 2.95cm +yx \ll\xy (-12,8);(-2,8) **@{-}, (2,8);(12,8) **@{-}, (-12,8);(-12,2) **@{-}, (12,8);(12,2) **@{-}, (-12,-8);(-2,-8) **@{-}, (2,-8);(12,-8) **@{-}, (-12,-8);(-12,-2) **@{-}, (12,-8);(12,-2) **@{-},
(-2,8);(2,4) **@{-}, (-2,4);(-0.4,5.4) **@{-},
(0.4,6.4);(2,8) **@{-}, (2,0);(4,-2) **@{-}, (-2,0);(-4,-2) **@{-},
(-2,-4);(1.4,-0.4) **@{-}, (2.4,0.4);(4,2) **@{-},
(2,-4);(0.6,-2.4) **@{-}, (-0.4,-1.6);(-1.7,-0.3) **@{-},  (-2.4,0.4);(-4,2) **@{-},
(-12,2);(-10.4,0.4) **@{-}, (-9.6,-0.4);(-8,-2) **@{-}, (-12,-2);(-8,2) **@{-},
(-8,2);(-6.4,0.4) **@{-}, (-5.6,-0.4);(-4,-2) **@{-}, (-8,-2);(-4,2) **@{-},
(12,2);(10.4,0.4) **@{-}, (9.6,-0.4);(8,-2) **@{-},
(12,-2);(8,2) **@{-},
(5.6,-0.4);(4,-2) **@{-}, (8,2);(6.6,0.4) **@{-},
(8,-2);(4,2) **@{-},
(-1,6.4)*{\ulcorner}, (-1.4,-8)*{\urcorner}, (-3,-1.2)*{\llcorner}, (-11,-1.2)*{\llcorner}, (-8.9,-1.2)*{\lrcorner}, (11,0.5)*{\urcorner}, (8.9,0.5)*{\ulcorner}, (1,-3)*{\lrcorner},
(-2,4);(-2,0) **\crv{(1,2)}, (2,0);(2,4) **\crv{(-1,2)},
(-2,-4);(2,-4) **\crv{(0,-7)}, (-2,-8);(2,-8) **\crv{(0,-5)},
 \endxy\gg
 ~+y^2 \ll\xy (-12,8);(-2,8) **@{-}, (2,8);(12,8) **@{-}, (-12,8);(-12,2) **@{-}, (12,8);(12,2) **@{-}, (-12,-8);(-2,-8) **@{-}, (2,-8);(12,-8) **@{-}, (-12,-8);(-12,-2) **@{-}, (12,-8);(12,-2) **@{-},
(-2,8);(2,4) **@{-}, (-2,4);(-0.4,5.4) **@{-},
(0.4,6.4);(2,8) **@{-}, (2,0);(4,-2) **@{-}, (-2,0);(-4,-2) **@{-},
(-2,-4);(1.4,-0.4) **@{-}, (2.4,0.4);(4,2) **@{-},
(2,-4);(0.6,-2.4) **@{-}, (-0.4,-1.6);(-1.7,-0.3) **@{-},  (-2.4,0.4);(-4,2) **@{-},
(-12,2);(-10.4,0.4) **@{-}, (-9.6,-0.4);(-8,-2) **@{-}, (-12,-2);(-8,2) **@{-},
(-8,2);(-6.4,0.4) **@{-}, (-5.6,-0.4);(-4,-2) **@{-}, (-8,-2);(-4,2) **@{-},
(12,2);(10.4,0.4) **@{-}, (9.6,-0.4);(8,-2) **@{-},
(12,-2);(8,2) **@{-},
(5.6,-0.4);(4,-2) **@{-}, (8,2);(6.6,0.4) **@{-},
(8,-2);(4,2) **@{-},
(-1,6.4)*{\ulcorner}, (-1.4,-8)*{\urcorner}, (-3,-1.2)*{\llcorner}, (-11,-1.2)*{\llcorner}, (-8.9,-1.2)*{\lrcorner}, (11,0.5)*{\urcorner}, (8.9,0.5)*{\ulcorner}, (1,-3)*{\lrcorner},
(-2,4);(-2,0) **\crv{(1,2)}, (2,0);(2,4) **\crv{(-1,2)},
(-2,-4);(-2,-8) **\crv{(1,-6)}, (2,-8);(2,-4) **\crv{(-1,-6)},
 \endxy~\gg\\
= & x^2 \ll O^2 \gg
+ xy \bigg[a^{18}\ll O \gg + (a^6-a^{12}) \ll \xy
(12,2);(10.4,0.4) **@{-}, 
(9.6,-0.4);(8,-2) **@{-},
(12,-2);(8,2) **@{-},
(5.6,-0.4);(4,-2) **@{-}, 
(8,2);(6.6,0.4) **@{-},
(8,-2);(4,2) **@{-},
(11,0.5)*{\urcorner}, (8.9,0.5)*{\ulcorner}, 
(4,2);(2.4,0.4) **@{-}, 
(1.6,-0.4);(0,-2) **@{-},
(4,-2);(0,2) **@{-},
(-3.6,-0.4);(-4,-2) **@{-}, 
(0,2);(-1.6,0.4) **@{-},
(0,-2);(-4,2) **@{-},
(-2.2,-0.4);(-4,-2) **@{-},
(-4,2);(12,2) **\crv{(5,6)},
(-4,-2);(12,-2) **\crv{(5,-6)},
\endxy\gg\bigg]\\
&+ xy\bigg[a^{18}
\ll\xy
(12,2);(10.4,0.4) **@{-}, 
(9.6,-0.4);(8,-2) **@{-},
(12,-2);(8,2) **@{-},
(5.6,-0.4);(4,-2) **@{-}, 
(8,2);(6.6,0.4) **@{-},
(8,-2);(4,2) **@{-},
(11,0.5)*{\urcorner}, (8.9,0.5)*{\ulcorner}, 
(4,-2);(-2,-2) **\crv{(1,-5)},
(4,4);(2,-2) **\crv{(0,1)},
(-2,2);(1.5,4) **\crv{(-1,4.5)},
(-2,-2);(-2,2) **\crv{(-3,0)},
(4,-4);(12,-2) **\crv{(10,-6)},
(4,4);(12,2) **\crv{(10,6)},
\endxy
\gg
+(a^6-a^{12}) \ll
\xy
(12,2);(10.4,0.4) **@{-}, 
(9.6,-0.4);(8,-2) **@{-},
(12,-2);(8,2) **@{-},
(5.6,-0.4);(4,-2) **@{-}, 
(8,2);(6.6,0.4) **@{-},
(8,-2);(4,2) **@{-},
(4,2);(12,2) **\crv{(8,6)},
(4,-2);(12,-2) **\crv{(8,-6)},
(11,0.5)*{\urcorner}, (8.9,0.5)*{\ulcorner}, 
\endxy
\gg\bigg]\\
& + y^2 \bigg[ a^{18} \ll \xy
(12,2);(10.4,0.4) **@{-}, 
(9.6,-0.4);(8,-2) **@{-},
(12,-2);(8,2) **@{-},
(5.6,-0.4);(4,-2) **@{-}, 
(8,2);(6.6,0.4) **@{-},
(8,-2);(4,2) **@{-},
(4,2);(12,2) **\crv{(8,6)},
(4,-2);(12,-2) **\crv{(8,-6)},
(11,0.5)*{\urcorner}, (8.9,0.5)*{\ulcorner}, 
\endxy\gg
+ (a^6-a^{12}) \ll O \gg \bigg]\\
= 
& x^2(a^{-6}+1+a^6) +xy \bigg[ a^{18} + 
(a^6-a^{12})(a^{-42}+a^{-36}+a^{-24}-a^{-12}+a^{-6}) \bigg] \\
& + xy \bigg[ a^{18}(a^{-18}-a^{-6}+1-a^{6}+a^{18})   
+(a^6-a^{12})(a^{-24}+a^{-18}+a^{-6}) \bigg] \\
& + y^2\bigg[a^{18}(a^{-24}+a^{-18}+a^{-6})+(a^6-a^{12}) \bigg] \\
=
& (a^{-6}+1+a^6)(x^2+y^2) \\ 
& +(a^{-36}-a^{-24}+2a^{-18} - a^{-12}-2a^{-6}+4-2a^{6}
-a^{12}+2a^{18}-a^{24}+a^{36}) xy. 
\end{align*}
\end{example}


\section{Behaviors under Yoshikawa moves $\Gamma_6$, $\Gamma'_6$, $\Gamma_7$ and $\Gamma_8$}\label{sect-on-inv-osl}

In this section we investigate behaviors of $\ll\cdot\gg$ under Yoshikawa moves $\Gamma_6$, $\Gamma'_6$, $\Gamma_7$ and $\Gamma_8$.  

\begin{proposition}\label{lem-m06} 
The moves $\Gamma_6$ and $\Gamma'_6$ change  
the polynomial $\ll\cdot\gg$ as follows.  
\begin{align*}
&\ll~\xy
(1,3);(6,-2) **@{-}, 
(1,-3);(6,2) **@{-},
(6,2);(10,2) **\crv{(8,4)}, 
(6,-2);(10,-2) **\crv{(8,-4)}, 
(10,2);(10,-2) **\crv{(11.5,0)}, 
(1.5,-3) *{\urcorner}, 
(1.5,1.9) *{\ulcorner},
(9.5,-3)*{\urcorner},
(4,2);(4,-2) **@{-}, 
(4.1,2);(4.1,-2) **@{-}, 
(3.9,2);(3.9,-2) **@{-},
\endxy~\gg 
=\bigg((a^{-6}+1+a^6)x+y\bigg)
\ll~\xy (1.5,-3);(1.5,3) **\crv{(4,0)}, 
(2.4,1.5) *{\ulcorner}, 
\endxy~\gg,
\\
&\ll~\xy (1,3);(6,-2) **@{-}, 
(1,-3);(6,2) **@{-},
(6,2);(10,2) **\crv{(8,4)}, 
(6,-2);(10,-2)**\crv{(8,-4)}, 
(10,2);(10,-2) **\crv{(11.5,0)}, 
(1.5,-3) *{\urcorner}, 
(1.5,1.9) *{\ulcorner},
(9.5,-3)*{\urcorner},
(2,0);(6,0) **@{-}, 
(2,0.1);(6,0.1) **@{-},
(2,0.2);(6,0.2) **@{-}, 
(2,-0.1);(6,-0.1) **@{-},
(2,-0.2);(6,-0.2) **@{-},
\endxy~\gg 
=\bigg(x+(a^{-6}+1+a^6)y\bigg)
\ll~\xy (1.5,-3);(1.5,3) **\crv{(4,0)}, 
(2.4,1.5) *{\ulcorner}, 
\endxy~\gg.
\end{align*}
\end{proposition}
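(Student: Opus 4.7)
My plan is to apply the smoothing axiom ({\bf L2}) at the single marked vertex on each left-hand side, topologically identify the two resolved tangles, and then use rule ({\bf K1}) to absorb the disjoint trivial circle that appears in one of the resolutions. I shall label the four endpoints of the local X-tangle by UL, UR, LL, LR, observe that in both left-hand sides an external arc joins UR to LR on the right, and denote by $C$ the U-shaped arc appearing on each right-hand side.

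For the $\Gamma'_6$ identity the marker is horizontal, so ({\bf L2}) applies directly. The $T_\infty$ smoothing (coefficient $x$) produces the top and bottom caps UL--UR and LL--LR; concatenated with the external UR--LR cap these form the single arc $C$. The $T_0$ smoothing (coefficient $y$) produces the side caps UL--LL and UR--LR; concatenated with the external UR--LR cap the two UR--LR arcs close up into a trivial circle, leaving $C$ disjoint from it. Therefore
\[
[[\mathrm{LHS}_{\Gamma'_6}]] \;=\; x\,[[C]] + y\,[[C\sqcup O]] \;=\; \bigl(x+(a^{-6}+1+a^6)\,y\bigr)\,[[C]],
\]
where the second equality uses ({\bf K1}). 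Since no classical crossings appear on either side, the writhe is zero everywhere and the normalization $a^{8w}$ is trivial, so this identity passes at once to $\ll\cdot\gg$.

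For the $\Gamma_6$ identity the marker is the $90^\circ$ rotation of the one in ({\bf L2}). Because $\langle\cdot\rangle_{A_2}$, and hence $[[\cdot]]$, is invariant under ambient isotopy of $S^2$, I may rotate the local picture by $90^\circ$, apply ({\bf L2}) in the rotated frame, and rotate back. The net effect is the same skein relation but with the two smoothings swapped between the coefficients $x$ and $y$: now $x$ weights the UL--LL/UR--LR resolution, which together with the external right cap yields $C\sqcup O$, and $y$ weights the UL--UR/LL--LR resolution, yielding $C$. Hence
\[
[[\mathrm{LHS}_{\Gamma_6}]] \;=\; x\,[[C\sqcup O]] + y\,[[C]] \;=\; \bigl((a^{-6}+1+a^6)\,x + y\bigr)\,[[C]],
\]
and the same writhe argument transports this to $\ll\cdot\gg$.

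The main point requiring care is the rotational symmetry used in the $\Gamma_6$ case: I must confirm that the assignment of $x$ to $T_\infty$ and $y$ to $T_0$ in ({\bf L2}) tracks the intrinsic marker orientation rather than a fixed horizontal direction on the page, so that a $90^\circ$ rotation of the local picture genuinely interchanges the two coefficients. This follows from the $S^2$-isotopy invariance of $\langle\cdot\rangle_{A_2}$, after which everything else reduces to a straightforward topological identification of the resolved tangles glued to the external right cap.
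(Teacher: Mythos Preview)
Your argument is correct and essentially identical to the paper's: apply the marked-vertex skein relation (the paper uses Theorem~\ref{thm-skein-rel}(4) directly for $\ll\cdot\gg$, so no separate rotation discussion or passage from $[[\cdot]]$ is written out), identify the two resolutions topologically, and absorb the disjoint trivial circle via ({\bf K1}). One minor correction: the diagrams in the proposition are local pictures inside a larger marked graph diagram that may well contain crossings, so the writhe need not be zero---the relevant point is that the local move involves no crossings, hence the writhe is \emph{unchanged}, and the common factor $a^{8w}$ transports the $[[\cdot]]$ identity to $\ll\cdot\gg$.
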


\begin{proof}
For $\Gamma_6$ and $\Gamma'_6$, we have
\begin{align*}
\ll ~\xy
(1,3);(6,-2) **@{-}, 
(1,-3);(6,2) **@{-},
(6,2);(10,2) **\crv{(8,4)}, 
(6,-2);(10,-2)**\crv{(8,-4)}, 
(10,2);(10,-2) **\crv{(11.5,0)}, 
(1.5,-3) *{\urcorner}, 
(1.5,1.9) *{\ulcorner},
(9.5,-3)*{\urcorner},
(4,2);(4,-2) **@{-}, 
(4.1,2);(4.1,-2) **@{-}, 
(3.9,2);(3.9,-2)**@{-},
\endxy~ \gg 
&= x \ll ~\xy 
(1.5,-3);(1.5,3) **\crv{(4,0)}, 
(2.4,1.5) *{\ulcorner}, 
(6,2);(6,-2) **\crv{(4.5,0)},
(6,2);(10,2) **\crv{(8,4)},
(6,-2);(10,-2) **\crv{(8,-4)}, 
(10,2);(10,-2) **\crv{(11.5,0)},
(5.6,-1.5) *{\lrcorner},
\endxy~ \gg  
+ y \ll ~\xy 
(1.5,-3);(1.5,3) **\crv{(4,0)}, 
(2.4,1.5) *{\ulcorner}, 
\endxy~ \gg 
=\bigg((a^{-6}+1+a^6)x+y\bigg) \ll ~\xy 
(1.5,-3);(1.5,3) **\crv{(4,0)}, 
(2.4,1.5) *{\ulcorner}, 
\endxy~ \gg, \\ 
\ll ~\xy 
(1,3);(6,-2) **@{-}, 
(1,-3);(6,2) **@{-},
(6,2);(10,2) **\crv{(8,4)}, 
(6,-2);(10,-2)**\crv{(8,-4)}, 
(10,2);(10,-2) **\crv{(11.5,0)}, 
(1.5,-3) *{\urcorner}, 
(1.5,1.9) *{\ulcorner},
(9.5,-3)*{\urcorner},
(2,0);(6,0) **@{-}, 
(2,0.1);(6,0.1) **@{-},
(2,0.2);(6,0.2) **@{-}, 
(2,-0.1);(6,-0.1) **@{-},
(2,-0.2);(6,-0.2) **@{-},
\endxy~ \gg  
&= x \ll ~\xy 
(1.5,-3);(1.5,3) **\crv{(4,0)}, 
(2.4,1.5) *{\ulcorner}, 
\endxy~ \gg 
+ y \ll ~\xy 
(6,2);(6,-2) **\crv{(4.5,0)},
(6,2);(10,2) **\crv{(8,4)},
(6,-2);(10,-2) **\crv{(8,-4)}, 
(10,2);(10,-2) **\crv{(11.5,0)},
(5.6,-1.5) *{\lrcorner},
(1.5,-3);(1.5,3) **\crv{(4,0)}, 
(2.4,1.5) *{\ulcorner}, 
\endxy~ \gg
=\bigg(x+(a^{-6}+1+a^6)y\bigg) \ll ~\xy 
(1.5,-3);(1.5,3) **\crv{(4,0)}, 
(2.4,1.5) *{\ulcorner}, 
\endxy~\gg. 
\end{align*}
This completes the proof. 
\end{proof}

In order to consider behaviors of the polynomial $\ll\cdot\gg$ under Yoshikawa moves $\Gamma_7$ and $\Gamma_8$, we prepare lemmas. 

By an {\it $n$-tangle diagram} ($n\geq 1$) we mean an oriented link diagram or a tangled trivalent graph diagram $\mathcal T$ in the rectangle $I^2=[0,1]\times [0,1]$ in $\mathbb R^2$ such that $\mathcal T$ transversely intersect with $(0,1)\times\{0\}$ and $(0,1)\times\{1\}$ in $n$ distinct points, respectively, called the {\it endpoints} of $\mathcal T$. The {\it boundary} of an $n$-tangle diagram $\mathcal T$ is defined to be the boundary of $I^2$ together with the $2n$ endpoints equipped with inward or outward pointing normals that coincide with the orientations on intersecting arcs of $\mathcal T$. In Figure \ref{fig-bdary-t}, (a) is the boundary of a $3$-tangle diagram, and (b) is the boundary of a $4$-tangle diagram. 

\begin{figure}[ht]
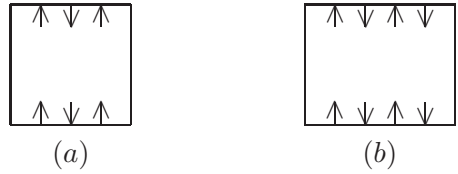

\centerline{\xy
(-8,8);(8,8) **@{-},
(-8,-8);(8,-8) **@{-},
(8,-8);(8,8) **@{-},
(-8,-8);(-8,8) **@{-},
(-4,5);(-4,8) **@{-}, (-3.95,6.7) *{\wedge},
(-4,-5);(-4,-8) **@{-}, (-3.97,-6) *{\wedge},
(0,5);(0,8) **@{-}, (0.1,6) *{\vee},
(0,-5);(0,-8) **@{-}, (0.1,-7) *{\vee},
(4,5);(4,8) **@{-}, (4.05,6.7) *{\wedge},
(4,-5);(4,-8) **@{-}, (4.04,-6) *{\wedge},
(0,-12) *{(a)},
\endxy
\qquad\qquad\qquad
\xy
(-8,8);(12,8) **@{-},
(-8,-8);(12,-8) **@{-},
(12,-8);(12,8) **@{-},
(-8,-8);(-8,8) **@{-},
(-4,5);(-4,8) **@{-}, (-3.95,6.7) *{\wedge},
(-4,-5);(-4,-8) **@{-}, (-3.97,-6) *{\wedge},
(0,5);(0,8) **@{-}, (0.1,6) *{\vee},
(0,-5);(0,-8) **@{-}, (0.1,-7) *{\vee},
(4,5);(4,8) **@{-}, (4.05,6.7) *{\wedge},
(4,-5);(4,-8) **@{-}, (4.04,-6) *{\wedge},
(8,5);(8,8) **@{-}, (8.1,6) *{\vee},
(8,-5);(8,-8) **@{-}, (8.1,-7) *{\vee},
(2,-12) *{(b)},
\endxy}
\caption{Boundaries of $3, 4$-tangle diagrams}\label{fig-bdary-t}
\end{figure}

\begin{lemma}\label{3-tangle basis}
Let $\mathcal T$ be a $3$-tangle diagram with the boundary (a) in Figure~\ref{fig-bdary-t} such that there are no crossings, $2$-gons and $4$-gons and that 
there are no connected components as diagrams in ${\rm Int}D^2$. Then $\mathcal T$ is one of the six fundamental $3$-tangle diagrams $f_0, f_1, \ldots, f_5$ shown in Figure~\ref{fig-3tbas}.

\begin{figure}[h]
\begin{center}
\resizebox{0.70\textwidth}{!}{%
  \includegraphics{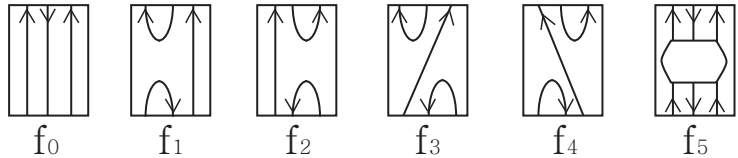} }
\caption{Fundamental $3$-tangle diagrams}\label{fig-3tbas}
\end{center}
\end{figure}
\end{lemma}

\begin{lemma}\label{4-tangle basis}
Let $\mathcal T$ be a $4$-tangle diagram with the boundary (b) in Figure~\ref{fig-bdary-t} such that there are no crossings, $2$-gons and $4$-gons and that there are no connected components as diagrams in ${\rm Int}D^2$. Then $\mathcal T$ is one of the $23$ fundamental $4$-tangle diagrams $g_0, g_1, \ldots, g_{22}$ shown in Figure~\ref{fig-4tbas}.
\begin{figure}[h]
\begin{center}
\resizebox{0.70\textwidth}{!}{%
  \includegraphics{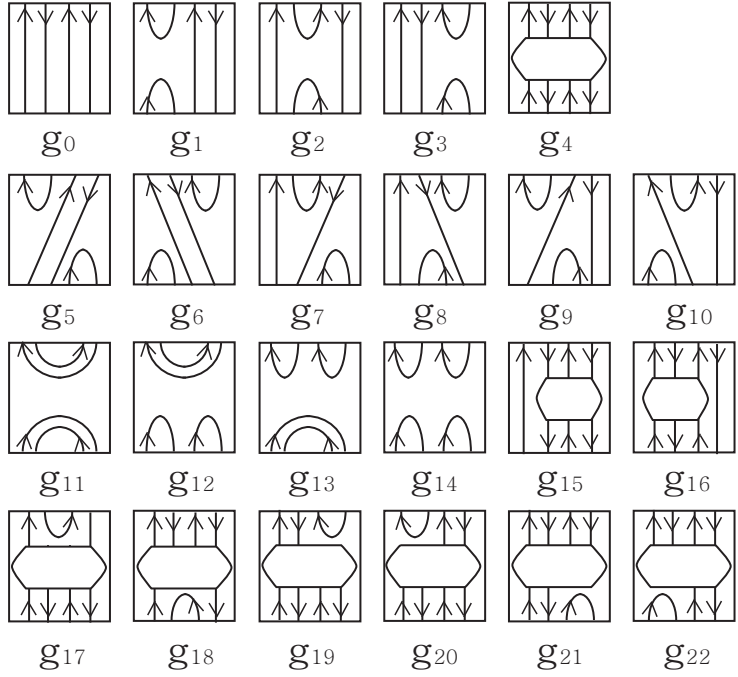} }
\caption{Fundamental $4$-tangle diagrams} \label{fig-4tbas}
\end{center}
\end{figure}
\end{lemma}

Lemmas \ref{3-tangle basis} and \ref{4-tangle basis} are proved in the end of this paper.

The following proposition gives the behavior of the polynomial $\ll\cdot\gg$ under a Yoshikawa move $\Gamma_7$.

\begin{proposition}\label{lem-m07}
Let $D$ and $D'$ be oriented marked graph diagrams such that $D'$ is obtained from $D$ by a Yoshikawa move $\Gamma_7$ as depicted in Figure~\ref{fig-m07}. Then 
$$\ll D\gg - \ll D'\gg \, = \Delta(a)xy\psi(a,x,y),$$
where $\psi(a,x,y)$ is a polynomial in $\mathbb Z[a^{-1},a,x,y]$ and 
\begin{equation}\label{gen-poly-7-8}
\Delta(a) ~ := ~ (a^{-6}+1+a^6)^2-1 
~ = ~ a^{-12}(a^{12}+1)(a^{6}+1)^2. 
\end{equation}
\end{proposition}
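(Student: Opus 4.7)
The plan is to expand both $\ll D\gg$ and $\ll D'\gg$ using the state-sum formula \eqref{state formulaB} applied to the two marked vertices that lie in the local region of the $\Gamma_7$ move. Each diagram then admits four resolutions labelled by a state $\sigma=(\sigma_1,\sigma_2)$ with $\sigma_i\in\{T_\infty,T_0\}$; write $D_\sigma$ and $D'_\sigma$ for the corresponding link diagrams, which agree with $D$ and $D'$ outside the local region. Since $\Gamma_7$ preserves the writhe (the crossings on the two sides have matching signs), $w(D)=w(D')$, and \eqref{state formulaB} gives
\[
\ll D\gg - \ll D'\gg \,=\, \sum_{\sigma} x^{\sigma(\infty)}y^{\sigma(0)}\bigl(\ll D_\sigma\gg - \ll D'_\sigma\gg\bigr).
\]

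The first step is to dispose of the pure states $\sigma=(T_\infty,T_\infty)$ and $\sigma=(T_0,T_0)$. In each such state both marked vertices are replaced by the same trivial $2$-tangle type on both sides, and a direct verification shows that $D_\sigma$ and $D'_\sigma$ are then related by a sequence of Reidemeister II and III moves confined to the $\Gamma_7$ region. Since $\ll\cdot\gg$ is a regular-isotopy invariant by Theorem \ref{thm-skein-rel}, we conclude $\ll D_\sigma\gg = \ll D'_\sigma\gg$ for each pure state, so the $x^2$ and $y^2$ terms vanish and the problem reduces to analyzing the two mixed states.

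For each mixed state $\sigma$, the local picture of $D_\sigma$ (respectively $D'_\sigma$) is a $4$-tangle diagram with crossings inherited from $\Gamma_7$. My strategy is to apply the skein relations (K3)--(K5) repeatedly to remove these crossings, thereby expressing each local diagram as a $\mathbb Z[a^{-1},a]$-linear combination of crossingless tangled trivalent graph diagrams, and then to invoke Lemma \ref{4-tangle basis} to reduce those $4$-tangles to the fundamental list $g_0,\ldots,g_{22}$. Performing this reduction on both $D_\sigma$ and $D'_\sigma$, I expect essentially all coefficients to agree between the two sides, leaving a residual difference supported on a configuration with two isolated trivial circles on one side of the move and none on the other. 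Two applications of (K1) then extract the scalar factor $(a^{-6}+1+a^6)^2-1 = \Delta(a)$, while the remaining tangled graph diagram determines the polynomial $\psi(a,x,y)$.

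The principal obstacle is the combinatorial bookkeeping in the mixed states: for each fundamental $4$-tangle $g_i$, one must track the coefficient it receives in the skein expansion of both $D_\sigma$ and $D'_\sigma$ and check that, after forming the difference, only the ``two circles versus none'' component survives. This is precisely the computation that the key lemmas of Sections~\ref{sect-pf-prop-e3t} and \ref{sect-pf-prop-e4t} are designed to encapsulate; they should supply the identities needed to isolate the factor $\Delta(a) = a^{-12}(a^{12}+1)(a^6+1)^2$ and read off $\psi$ explicitly.
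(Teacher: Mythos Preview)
Your reduction to the $xy$ term is on the right track: the paper also expands the two marked vertices in $T_7$ and $T'_7$ and finds that the $x^2$, $y^2$, and one of the two $xy$ contributions agree on both sides (they are $[[f_2\circ \mathcal T]]$, $[[f_1\circ \mathcal T]]$, and $[[f_0\circ \mathcal T]]$ respectively), so the entire difference is carried by a single mixed state. However, the mechanism you propose for that remaining state does not work as stated, for two reasons.

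First, the local $\Gamma_7$ region contains no crossings at all---only the two marked vertices and planar arcs---so after resolving the markers there is nothing for (K3)--(K5) to act on inside the local picture. The surviving local tangles are already crossingless $3$-tangles (in the paper's notation, $f_4$ on the $T_7$ side and $f_3$ on the $T'_7$ side), not $4$-tangles; Lemma~\ref{4-tangle basis} is the tool for $\Gamma_8$, and the relevant lemma here is Lemma~\ref{3-tangle basis}.

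Second, and more importantly, the difference $\ll D_\sigma\gg - \ll D'_\sigma\gg$ depends on the \emph{complementary} tangle $\mathcal T$ (the rest of the diagram), and your ``two circles versus none'' picture only holds for a particular closure. For a general $\mathcal T$ there is no reason, a priori, that $[[f_4\circ\mathcal T]]-[[f_3\circ\mathcal T]]$ should be divisible by $\Delta(a)$. The paper handles this by applying (K1)--(K5) to $\mathcal T$ itself, expanding it over the six fundamental $3$-tangles $f_0,\dots,f_5$ of Lemma~\ref{3-tangle basis}, and then checking by direct computation (Table~\ref{table:1}) that $[[f_4\circ f_i]]-[[f_3\circ f_i]]$ is $0$ for $i\in\{0,1,2,5\}$ and $\pm(A^2-1)=\pm\Delta(a)$ for $i\in\{3,4\}$. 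That finite verification is the missing ingredient: without reducing the complement to a basis you cannot extract a uniform $\Delta(a)$ factor.
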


\begin{figure}[ht]
\begin{center}
\resizebox{0.50\textwidth}{!}{%
  \includegraphics{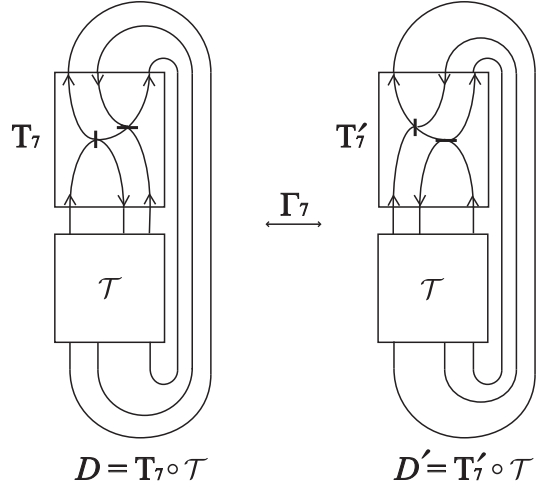}}
\caption{Yoshikawa move $\Gamma_7$} \label{fig-m07}
\end{center}
\end{figure}

\begin{proof}
Applying the axioms ({\bf L1}) and ({\bf L2}) in Definition \ref{defn-L-poly-osl} and ({\bf K1})--({\bf K5}) in Definition \ref{thm-A2inv} to the 3-tangle diagram $\mathcal T$ in $D=T_7\circ \mathcal T$, we can express $[[D]]$ as a linear combination of polynomials $[[T_7\circ U_k]] (1 \leq k \leq m)$ for some integer $m \geq 1$, where each $U_k$ is a $3$-tangle diagram 
satisfying the assumption on $\mathcal T$ in Lemma~\ref{3-tangle basis}. 
By the lemma,  we see that $U_k$ is one of the fundamental $3$-tangle diagrams $f_0, f_1, \dots, f_5$ in Figure~\ref{fig-3tbas}. Hence we have
$$[[D]]=[[T_7\circ\mathcal T]]=\sum_{i=0}^5\psi_i(a,x,y)[[T_7 \circ f_i]],
$$
where $\psi_i(a,x,y)$ is a polynomial in $\mathbb Z[a,a^{-1},x,y].$ Similarly, we have
$$[[D']] =[[T'_7\circ\mathcal T]]=\sum_{i=0}^5\psi_i(a,x,y)[[T'_7 \circ f_i]].$$
This gives
\begin{equation}\label{eq1-pf-m7}
[[D]]-[[D']]=\sum_{i=0}^5\psi_i(a,x,y)\bigg([[T_7 \circ f_i]]-[[T'_7 \circ f_i]]\bigg).
\end{equation}
By a straightforward computation, we obtain 
\begin{align*}
[[T_7 \circ f_i]] &=[[f_2\circ f_i]]x^2+[[f_0\circ f_i]]xy+[[f_4\circ f_i]]yx+[[f_1\circ f_i]]y^2,\\
[[T_7' \circ f_i]] &=[[f_2\circ f_i]]x^2+[[f_0\circ f_i]]xy+[[f_3\circ f_i]]yx+[[f_1\circ f_i]]y^2.
\end{align*}
Let $A=a^{-6}+1+a^6$ and $B=a^{-3}+a^{3}$. Then it is easily checked that 
$ [[f_3\circ f_i]] $ and $ [[f_4\circ f_i]] $ with $0 \leq i \leq 5$ are as in 
Table~\ref{table:1}.  
Hence 
\begin{align*}
[[T_7 \circ f_i]] - [[T_7' \circ f_i]] 
&=xy\bigg([[f_4\circ f_i]]-[[f_3\circ f_i]]\bigg)\\
&=\left\{
    \begin{array}{ll}
      0, & \hbox{$i=0, 1, 2, 5;$} \\
  xy(A^2 -1) = xy \Delta(a),  & \hbox{$i=3$;} \\
  - xy(A^2 -1) = - xy \Delta(a) , & \hbox{$i=4$.} 
    \end{array}
  \right.
\end{align*}
Therefore it follows from (\ref{eq1-pf-m7}) that 
$ [[D]]-[[D']]= \Delta(a)xy\psi'(a,x,y), $
where $\psi'(a,x,y)=\psi_3(a,x,y)- \psi_4(a,x,y).$ 
Finally, since $w(D)=w(D')$, we obtain 
$ \ll D\gg-\ll D'\gg \, = \Delta(a)xy\psi(a,x,y), $
where $\psi(a,x,y)=a^{8w(D)}\psi'(a,x,y)$. 
\end{proof}

\begin{table}[h]
$$
\begin{array}{|c|llllll|}
\hline
\circ & f_0 & f_1 & f_2 & f_3 & f_4 & f_5\\
\hline
f_3 & 1 & A & A & 1     & A^2 & B^3\\
f_4 & 1 & A & A & A^2 & 1     & B^3\\
\hline
\end{array}
$$
\caption{$ [[f_3\circ f_i]] $ and $ [[f_4\circ f_i]] $}
\label{table:1}
\end{table}

Now we investigate the behavior of $\ll\cdot\gg$ under a Yoshikawa move $\Gamma_8$.

\begin{proposition}\label{lem-m08}
Let $D$ and $D'$ be oriented marked graph diagrams such that $D'$ is obtained from $D$ by a Yoshikawa move $\Gamma_8$ as depicted in Figure~\ref{fig-m08}. Then 
$$\ll D\gg - \ll D'\gg \, =   
(a^{-3}-a^{3})\Delta(a)xy\varphi(a,x,y),$$
where  $\varphi(a,x,y)$ is a polynomial in $\mathbb Z[a^{-1},a,x,y]$ and $\Delta(a)$ is the polynomial in (\ref{gen-poly-7-8}).
\begin{figure}[ht]
\begin{center}
\resizebox{0.50\textwidth}{!}{%
  \includegraphics{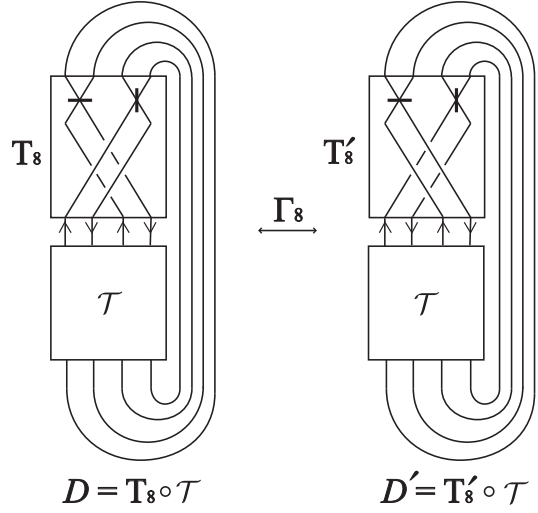}}
\caption{Yoshikawa move $\Gamma_8$}\label{fig-m08}
\end{center}
\end{figure}
\end{proposition}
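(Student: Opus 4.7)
The plan is to mimic the argument used for Proposition~\ref{lem-m07}, replacing the $3$-tangle reduction by the $4$-tangle reduction supplied by Lemma~\ref{4-tangle basis}. Write $D = T_8 \circ \mathcal{T}$ and $D' = T_8' \circ \mathcal{T}$ where $T_8, T_8'$ are the two $4$-tangle diagrams appearing in the move $\Gamma_8$ of Figure~\ref{fig-m08} and $\mathcal{T}$ is the common $4$-tangle diagram with the $4$-tangle boundary of Figure~\ref{fig-bdary-t}(b). Applying the axioms $({\bf L1})$, $({\bf L2})$ and the recursive rules $({\bf K1})$--$({\bf K5})$ of Theorem~\ref{thm-A2inv} to the interior of $\mathcal{T}$, one can resolve all its crossings, marked vertices, $2$-gons, $4$-gons and closed components, thereby expanding
\begin{equation*}
[[\mathcal{T}]] \; = \; \sum_{j=0}^{22} \varphi_j(a,x,y) [[g_j]],
\end{equation*}
where $g_0, g_1, \ldots, g_{22}$ are the fundamental $4$-tangle diagrams of Figure~\ref{fig-4tbas} and each $\varphi_j \in \mathbb{Z}[a^{-1},a,x,y]$.

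Plugging this expansion into both compositions yields
\begin{equation*}
[[D]] - [[D']] \; = \; \sum_{j=0}^{22} \varphi_j(a,x,y) \bigl( [[T_8 \circ g_j]] - [[T_8' \circ g_j]] \bigr).
\end{equation*}
Next I would compute each closed diagram $T_8 \circ g_j$ and $T_8' \circ g_j$ explicitly. The marked vertices in $T_8, T_8'$ are first expanded via $({\bf L2})$, producing four terms in $x^2, xy, yx, y^2$; the remaining crossings are removed using $({\bf K3})$--$({\bf K5})$, and the resulting closed $A_2$-diagrams are evaluated via $({\bf K0})$--$({\bf K2})$. Writing $A = a^{-6}+1+a^{6}$ and $B=a^{-3}+a^{3}$ as in the proof of Proposition~\ref{lem-m07}, one organizes the outputs into a $23 \times 4$ table (one row per $g_j$, one column per state assignment). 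The $x^2$ and $y^2$ columns coincide for $T_8$ and $T_8'$ because the corresponding resolutions of the marked vertices produce regular isotopic tangled trivalent graph diagrams, so their contribution to the difference vanishes; the whole difference concentrates in the $xy$ and $yx$ entries.

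I then expect the pointwise difference $[[T_8 \circ g_j]] - [[T_8' \circ g_j]]$ to factor, for every $j$, as $(a^{-3}-a^3)\,\Delta(a)\,xy\,\varphi_j^{*}(a)$ for some Laurent polynomial $\varphi_j^{*}(a)$. The factor $\Delta(a) = A^2 - 1$ should appear for the same structural reason as in the $\Gamma_7$ case: two loop-bearing resolutions differ by the deletion of a disjoint trivial component, contributing the multiplicative factor $A$. The additional factor $(a^{-3}-a^3)$ should come from the extra crossing present in $\Gamma_8$ compared with $\Gamma_7$; applying $({\bf K4})$ or $({\bf K5})$ to this crossing and canceling the common $a^{\pm1}$-weighted terms that match up between $T_8$ and $T_8'$ leaves precisely the skein difference of the form $a^{-1}\langle D_+\rangle_{A_2} - a\langle D_-\rangle_{A_2} = (a^{-3}-a^3)\langle D_0\rangle_{A_2}$ from~(\ref{A2-R1-move-1}). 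Collecting terms gives $[[D]]-[[D']] = (a^{-3}-a^3)\Delta(a)xy\,\varphi'(a,x,y)$ for some $\varphi'\in\mathbb Z[a^{-1},a,x,y]$, and since $w(D)=w(D')$ the same factorization holds for $\ll D\gg - \ll D'\gg$ with $\varphi(a,x,y)=a^{8w(D)}\varphi'(a,x,y)$.

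The main obstacle will be the sheer size of the case analysis: twenty-three fundamental $4$-tangle diagrams, each producing four marked-vertex resolutions, and each of those containing several crossings of $T_8$ and $T_8'$ that must be removed by $({\bf K4})$ and $({\bf K5})$. Ensuring that the $x^2$ and $y^2$ columns cancel cleanly and that every $xy$ and $yx$ residue carries the common factor $(a^{-3}-a^3)\Delta(a)$ demands organizing the computation (probably in tabular form analogous to Table~\ref{table:1}) so that the symmetries between pairs $g_j$ related by reflection or strand exchange can be exploited to reduce the independent verifications. Because this bookkeeping is lengthy, I would defer the complete tabulation to the dedicated Section~\ref{sect-pf-prop-e4t} mentioned in the introduction.
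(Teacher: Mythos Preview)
Your approach matches the paper's. The paper sharpens your observation slightly: three of the four marked-vertex resolutions of $T_8$ and $T_8'$ (not just the $x^2$ and $y^2$ ones, but also one mixed term) yield regular isotopic diagrams --- namely the fundamental tangles $g_5$, $g_6$, $g_{14}$ --- and the single differing resolution produces a pair of $4$-tangles $g$, $g^*$ that are mirror images of each other, so expanding each via $({\bf K4})$--$({\bf K5})$ gives the same $\mathbb Z[a^{\pm1}]$-linear combination of the $g_k$ with coefficients related by $a\leftrightarrow a^{-1}$; this mirror symmetry cuts the anticipated $23\times 4$ analysis down to a $23\times 6$ table of closed $A_2$-bracket values (Table~\ref{table:2}), from which the common factor $(a^{-3}-a^3)\Delta(a)$ is read off row by row.
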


\begin{proof}
Applying the axioms (${\bf L1}$), (${\bf L2}$)  and (${\bf K1}$)--(${\bf K5}$) to the 4-tangle diagram $\mathcal T$ in $D=T_8\circ \mathcal T$, we can express $[[D]]$ as a linear combination of $[[T_8\circ V_k]] (1\leq k\leq m)$ for some integer $m\geq 1$, where each $V_k$ is a $4$-tangle diagram 
satisfying the assumption on $\mathcal T$ in Lemma~\ref{4-tangle basis}. 
By the lemma, we see that 
 $V_k$ is one of the fundamental $4$-tangle diagrams $g_0, g_1, \dots, g_{22}$ in Figure~\ref{fig-4tbas}. Hence we have
$$[[D]] =[[T_8\circ\mathcal T]] =\sum_{i=0}^{22}\varphi_i(a,x,y)[[T_8 \circ g_i]],
$$
where $\varphi_i(a,x,y) \in \mathbb Z[a^{-1},a,x,y].$ Similarly, we obtain
$$[[D']] =[[T'_8\circ\mathcal T]]=\sum_{i=0}^{22}\varphi_i(a,x,y)[[T'_8 \circ g_i]].$$
This gives
\begin{equation}\label{eq1-pf-m8}
[[D]] -[[D']] =\sum_{i=0}^{22}\varphi_i(a,x,y)
\bigg([[T_8 \circ g_i]] -[[T'_8 \circ g_i]]\bigg).
\end{equation}
By a straightforward computation, we obtain 
\begin{align*}
[[T_8 \circ g_i]] &=[[g_5\circ g_i]] x^2+[[g_{14}\circ g_i]] xy+[[g\circ g_i]] yx+[[g_6\circ g_i]] y^2,\\
[[T_8' \circ g_i]] &=[[g_5\circ g_i]] x^2+[[g_{14}\circ g_i]] xy+[[g^*\circ g_i]] yx+[[g_6\circ g_i]] y^2,
\end{align*}
where $g$ and $g^*$ are $4$-tangle diagrams shown in Figure~\ref{fig-sharp-m08}.
\begin{figure}[ht]
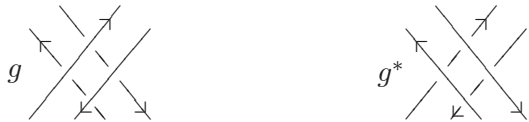

\centerline{\xy 
(11,15);(14.2,11) **@{-}, 
(15.8,9);(17.4,7) **@{-},
(19,5);(23,0) **@{-}, 
(7,12);(11.2,7) **@{-},
(12.7,5.2);(14.4,3.2) **@{-}, 
(15.7,1.6);(17,0) **@{-},
(13,0);(23,12) **@{-}, 
(7,0);(19,15) **@{-},
 (9.1,9) *{\ulcorner}, 
 (14.5,1.6) *{\llcorner}, 
 (17,12) *{\urcorner}, (21.7,1.6) *{\lrcorner},(5,6)*{g},
(69,15);(65.8,11) **@{-}, 
(64.2,9);(62.6,7) **@{-}, 
(61,5);(57,0) **@{-}, 
(73,12);(68.8,7) **@{-},
(67.3,5.2);(65.6,3.2) **@{-}, 
(64.3,1.6);(63,0) **@{-},
(67,0);(57,12) **@{-}, 
(73,0);(61,15) **@{-},
 (59.1,9) *{\ulcorner}, 
 (64,1.2) *{\llcorner}, 
 (67,12) *{\urcorner}, (71.7,1.6) *{\lrcorner}, (55,6)*{g^*},
\endxy}
\caption{$4$-tangle diagrams $g$ and $g^*$}\label{fig-sharp-m08}
\end{figure}

Using (${\bf L1}$) and (${\bf K2}$)--(${\bf K5}$), we obtain
\begin{align*}
[[g\circ g_i]] 
&= a^{-6}[[g_0\circ g_i]] + [[g_2\circ g_i]] + [[g_4\circ g_i]] + [[g_7\circ g_i]] + [[g_8\circ g_i]]\\
& + [[g_9\circ g_i]] + [[g_{10}\circ g_i]] + a^6 [[g_{11}\circ g_i]] - a^{-3}[[g_{15}\circ g_i]]\\  
&-a^{-3}[[g_{16}\circ g_i]] -a^3[[g_{17}\circ g_i]] -a^3[[g_{18}\circ g_i]].
\end{align*}
Since $g^*$ is the mirror image of $g$, it is seen from (${\bf K4}$) and (${\bf K5}$) that 
\begin{align*}
[[g^*\circ g_i]] &= a^{6}[[g_0\circ g_i]] + [[g_2\circ g_i]] + [[g_4\circ g_i]] + [[g_7\circ g_i]] + [[g_8\circ g_i]]\\
& + [[g_9\circ g_i]] + [[g_{10}\circ g_i]] + a^{-6} [[g_{11}\circ g_i]] - a^{3}[[g_{15}\circ g_i]]\\  
&-a^{3}[[g_{16}\circ g_i]] -a^{-3}[[g_{17}\circ g_i]] -a^{-3}[[g_{18}\circ g_i]].
\end{align*}
This gives that
\begin{align*}
[[g\circ g_i]] -[[g^*\circ g_i]]
& = (a^{-3}-a^{3})\bigg[(a^{-3}+a^{3})\bigg([[g_0\circ g_i]] - [[g_{11}\circ g_i]]\bigg) \\
& -[[g_{15}\circ g_i]]-[[g_{16}\circ g_i]]+[[g_{17}\circ g_i]]+[[g_{18}\circ g_i]]\bigg].
\end{align*}
By simple, but tedious calculations, we obtain Table~\ref{table:2} for $[[g_k\circ g_i]]$ with $k=0,11,15,16,17,18$ and $0\leq i\leq 22$. 


\begin{table}[h] 
\centerline{${\small \begin{array}{|l|llllll|}
\hline
\circ & g_0 & g_{11} & g_{15} & g_{16} & g_{17} & g_{18}\\
\hline
g_0 & A^3 & A & AB^3 & AB^3 & B^3 & B^3\\
g_1 & A^2 & 1 &   B^3 & AB^3 & B^3 & B^3\\
g_2 & A^2 & A^2 & AB^3 & AB^3 & AB^3 & AB^3\\
g_3 & A^2 & 1 & AB^3 & B^3 & B^3 & B^3\\
g_4 & B^4 & B^4 & 2B^3+B^5 & 2B^3+B^5 & 2B^3+B^5 & 2B^3+B^5\\
g_5 & 1 & 1 & B^3 & B^3 & B^3 & B^3\\
g_6 & 1 & 1 & B^3 & B^3 & B^3 & B^3\\
g_7 & A & A & AB^3 & B^3 & B^3 & AB^3\\
g_8 & A & A & AB^3 & B^3 & AB^3 & B^3\\
g_9 & A & A & B^3 & AB^3 & AB^3 & B^3\\
g_{10} & A & A & B^3 & AB^3 & B^3 & AB^3\\
g_{11} & A & A^3 & B^3 & B^3 & AB^3 & AB^3\\
g_{12} & 1 & A^2 & B^3 & B^3 & B^3 & AB^3\\
g_{13} & 1 & A^2 & B^3 & B^3 & AB^3 & B^3\\
g_{14} & A & A & B^3 & B^3 & B^3 & B^3\\
g_{15} & AB^3 & B^3 & 2AB^2+AB^4 & 2B^4 & 2B^4 & 2B^4\\
g_{16} & AB^3 & B^3 & 2B^4 & 2AB^2+AB^4 & 2B^4 & 2B^4\\
g_{17} & B^3 & AB^3 & 2B^4 & 2B^4 & 2B^4 & 2AB^2+AB^4\\
g_{18} & B^3 & AB^3 & 2B^4 & 2B^4 & 2AB^2+AB^4 & 2B^4\\
g_{19} & B^3 & B^3 & 2B^4 & 2B^2+B^4 & 2B^4 & 2B^2+B^4\\
g_{20} & B^3 & B^3 & 2B^2+B^4 & 2B^4 & 2B^4 & 2B^2+B^4\\
g_{21} & B^3 & B^3 & 2B^4 & 2B^2+B^4 & 2B^2+B^4 & 2B^4\\
g_{22} & B^3 & B^3 & 2B^2+B^4 & 2B^4 & 2B^2+B^4 & 2B^4\\
\hline
\end{array}}$}
\caption{$[[g_k\circ g_i]]$}
\label{table:2}
\end{table}

Thus it follows from (\ref{eq1-pf-m8}) and the identity $B^2=A+1$ that
\begin{align*}
&[[T_8 \circ g_i]] - [[T_8' \circ g_i]]
=([[g\circ g_i]] - [[g^*\circ g_i]])xy\\
&=\left\{
    \begin{array}{ll}
      ~~(a^{-6}-a^{6})(A-2)(A-1)(A+1)xy, & \hbox{$i=0;$} \\
      -(a^{-6}-a^{6})(A-2)(A-1)(A+1)xy, & \hbox{$i=11;$} \\
     -(a^{-3}-a^{3})(A-1)(A+1)xy, & \hbox{$i=15, 16;$} \\
     ~~(a^{-3}-a^{3})(A-1)(A+1)xy, & \hbox{$i=17, 18;$} \\
     ~~~0 & \hbox{otherwise.}
    \end{array}
  \right.\\
  &=\left\{
 \begin{array}{ll}
 ~~(a^{-3}+a^{3})(a^{-6}-1+a^{6})(a^{-3}-a^{3})\Delta(a)xy, & \hbox{$i=0;$} \\
   -(a^{-3}+a^{3})(a^{-6}-1+a^{6})(a^{-3}-a^{3})\Delta(a)xy, & \hbox{$i=11;$} \\
     -(a^{-3}-a^{3})\Delta(a)xy, & \hbox{$i=15, 16;$} \\
     ~~(a^{-3}-a^{3})\Delta(a)xy, & \hbox{$i=17, 18;$} \\
     ~~~0 & \hbox{otherwise.}
    \end{array}
  \right.
\end{align*}
Therefore it follows from (\ref{eq1-pf-m8}) that 
\begin{equation*}
[[D]]-[[D']]= (a^{-3}-a^{3})\Delta(a)xy\varphi'(a,x,y),
  \end{equation*}
for a polynomial $\varphi'(a,x,y)$ in $\mathbb Z[a^{-1},a,x,y]$. 
Since $w(D)=w(D')$, we obtain \begin{equation*}
\ll D\gg-\ll D'\gg \, = (a^{-3}-a^{3})\Delta(a)xy\varphi(a,x,y),
  \end{equation*}
where $\varphi(a,x,y)=a^{8w(D)}\varphi'(a,x,y)$. 
\end{proof}

\begin{theorem}\label{thm-inv-osl-original}
Let $\mathcal L$ be an oriented surface-link and let $D$ be an oriented marked graph diagram presenting $\mathcal L$. Then the polynomial $\ll D\gg \in \mathbb Z[a^{-1}, a, x,y]$, modulo the ideal generated by $\Delta(a)$, is an invariant of $\mathcal L$ up to multiplication by powers of $(a^{-6}+1+a^6)x + y$ and $x + (a^{-6}+1+a^6)y$.  
\end{theorem}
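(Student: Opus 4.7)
The plan is to assemble the theorem directly from Theorem~\ref{thm-skein-rel} together with Propositions~\ref{lem-m06},~\ref{lem-m07},~\ref{lem-m08}; no further combinatorial computation is needed. By the classification recalled in Section~\ref{sect-omgr-osl}, two oriented marked graph diagrams present equivalent oriented surface-links if and only if they are related by a finite sequence of the eleven Yoshikawa moves $\Gamma_1,\Gamma'_1,\Gamma_2,\Gamma_3,\Gamma_4,\Gamma'_4,\Gamma_5,\Gamma_6,\Gamma'_6,\Gamma_7,\Gamma_8$. It therefore suffices to track the effect of each move on $\ll D\gg$.

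Theorem~\ref{thm-skein-rel} already gives strict invariance under the first seven moves. For $\Gamma_6$ (resp.\ $\Gamma'_6$), Proposition~\ref{lem-m06} shows that $\ll D\gg$ is multiplied by $f_1 := (a^{-6}+1+a^6)x+y$ (resp.\ $f_2 := x+(a^{-6}+1+a^6)y$). This is genuinely a local effect: applying axiom (\textbf{L2}) to the marked vertex created by $\Gamma_6$ yields, in the $T_\infty$-term, a diagram containing one extra trivial loop (which contributes $a^{-6}+1+a^6$ by part (2) of Theorem~\ref{thm-skein-rel}), while the $T_0$-term returns the bare strand of the target diagram; hence the multiplicative factor $f_1$ (or $f_2$) appears regardless of what fills the rest of the diagram. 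Finally, Propositions~\ref{lem-m07} and~\ref{lem-m08} show that $\Gamma_7$ and $\Gamma_8$ alter $\ll D\gg$ by an element of the principal ideal $(\Delta(a))\subset\mathbb Z[a^{-1},a,x,y]$ (for $\Gamma_8$ there is an extra factor $(a^{-3}-a^3)$, which is absorbed into $(\Delta(a))$).

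Passing to the quotient ring $R := \mathbb Z[a^{-1},a,x,y]/(\Delta(a))$, the class $[\ll D\gg]$ is unchanged by every move except $\Gamma_6$ and $\Gamma'_6$, under which it is multiplied by $[f_1]^{\pm 1}$ and $[f_2]^{\pm 1}$ respectively, with the sign determined by the direction of the move. Consequently, for any two oriented marked graph diagrams $D,D'$ presenting the same oriented surface-link $\mathcal L$, there exist integers $\alpha,\beta,\gamma,\delta$ such that
\begin{equation*}
[f_1]^{\alpha}\,[f_2]^{\beta}\,[\ll D\gg] \;=\; [f_1]^{\gamma}\,[f_2]^{\delta}\,[\ll D'\gg] \quad \text{in } R,
\end{equation*}
which is precisely the stated invariance modulo $(\Delta(a))$ up to multiplication by powers of $(a^{-6}+1+a^6)x+y$ and $x+(a^{-6}+1+a^6)y$. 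All the combinatorial heavy lifting—the case analyses on the fundamental $3$- and $4$-tangle diagrams of Lemmas~\ref{3-tangle basis} and~\ref{4-tangle basis}—has already been completed in the proofs of Propositions~\ref{lem-m07} and~\ref{lem-m08}, so the theorem itself is essentially a bookkeeping statement; the only subtle point to highlight is that the $\Gamma_6$/$\Gamma'_6$ factors cannot be made to cancel in $R$ itself, which is why the conclusion must be phrased as equality up to powers of $[f_1]$ and $[f_2]$.
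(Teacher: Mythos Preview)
Your proof is correct and follows essentially the same approach as the paper's own proof, which simply states that the result follows from Theorem~\ref{thm-skein-rel} and Propositions~\ref{lem-m06}, \ref{lem-m07} and \ref{lem-m08}. Your write-up merely expands on this one-line argument, and the only stylistic quibble is that speaking of multiplication by $[f_1]^{\pm 1}$ in $R$ is slightly loose since $[f_1]$ need not be a unit---but you immediately resolve this by writing the conclusion in the correct form with non-negative exponents on both sides.
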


\begin{proof} 
It follows from Theorem~\ref{thm-skein-rel} and 
Propositions~\ref{lem-m06}, \ref{lem-m07} and \ref{lem-m08}.  
\end{proof}


\section{Specializations of the polynomial $\ll \cdot \gg$}\label{sect-special}

Here we consider some specializations of the polynomial $\ll \cdot \gg$. 

Let $m$ be a non-negative integer and let $I(a^m+1)$ be the ideal $(a^m+1)\mathbb Z[a^{-1},a,x,y]$ of $\mathbb Z[a^{-1},a,x,y]$ 
generated by $a^m+1$. We abbreviate $f + I(a^m+1)$ as $f$ for $f \in \mathbb Z[a^{-1},a,x,y]$ unless it makes confusion. 

For an oriented marked graph diagram $D$, we denote by 
$\ll D \gg_{a^m+1}$ the polynomial $\ll D \gg$ modulo the ideal $I(a^m+1)$, i.e.,  
$$
\ll D \gg_{a^m+1}  ~=~ \ll D \gg + I(a^m+1) =~ \ll D \gg 
~ \in \mathbb Z[a^{-1},a,x,y]/ I(a^m+1). 
$$

It follows from  Theorem~\ref{thm-skein-rel} that $\ll \cdot \gg_{a^m+1}$ is an invariant for oriented marked graphs satisfying the same conditions with (1)--(4) in 
Theorem~\ref{thm-skein-rel}.  In particular,  
for any positive integer $\mu$ and for any skein triple $(D_+, D_-, D_0)$ of  link diagrams, 
\begin{equation}\label{eqn:m=m(1)}
\ll O^\mu \gg_{a^m+1} =  (a^{-6}+1+a^6)^{\mu-1} 
\end{equation} 
and 
\begin{equation}\label{eqn:m=m(2)}
a^{-9}  \ll D_+ \gg_{a^m+1} - a^9 \ll D_- \gg_{a^m+1} =  (a^{-3}-a^3)  \ll D_0 \gg_{a^m+1}. 
\end{equation}

First we are interested in a case that $m=6$ or $m=12$, since if $a^6+1=0$ or $a^{12}+1=0$ then $\Delta(a)=0$, where $\Delta(a)$ is the polynomial in (\ref{gen-poly-7-8}).  

\vspace{0.5cm}

Suppose $m=6$.  Then, (\ref{eqn:m=m(1)}) implies that 
for any positive integer $\mu$, 
\begin{equation}\label{eqn:m=6(1)}
\ll O^\mu \gg_{a^6+1} = (-1)^{\mu-1}. 
\end{equation} 
Since $a^3$ is a unit in ${\mathbb Z}[a^{-1},a,x,y]/ I(a^6+1)$,   (\ref{eqn:m=m(2)}) implies that 
for any skein triple $(D_+, D_-, D_0)$ of  link diagrams, 
\begin{equation}\label{eqn:m=6(2)}
\ll D_+ \gg_{a^6+1} + \ll D_- \gg_{a^6+1} = -2 \ll D_0 \gg_{a^6+1}. 
\end{equation}

\begin{lemma}\label{lem:m=6}
Let $D$ be an oriented  link diagram and let $\# D$ denote the number of components of the link presented by $D$. 
Then $\ll D \gg_{a^6+1}$ is $1$ if $\# D$ is odd, or $-1$ if $\# D$ is even, i.e., 
$\ll D \gg_{a^6+1} = (-1)^{\# D -1}$. 
\end{lemma}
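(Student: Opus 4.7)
The plan is to proceed by strong induction on the number of crossings $c(D)$ of the oriented link diagram $D$. For the base case $c(D)=0$, the diagram is a disjoint union of $\#D$ simple closed curves in the plane, hence a diagram of the trivial link $O^{\#D}$; by (\ref{eqn:m=6(1)}) combined with the ambient isotopy invariance of $\ll\cdot\gg$ (Theorem~\ref{thm-skein-rel}), we obtain $\ll D\gg_{a^6+1}=(-1)^{\#D-1}$ at once.

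For the inductive step with $c(D)\geq 1$, I would invoke the classical descending-diagram construction: after ordering the components of $D$ and choosing a base point on each, one converts $D$ into a descending diagram $D^{(k)}$ by changing crossings one at a time, producing a sequence $D=D^{(0)},D^{(1)},\dots,D^{(k)}$ in which each $D^{(i+1)}$ differs from $D^{(i)}$ by a single crossing change. Crossing changes preserve the component count, so $\#D^{(i)}=\#D$ throughout; and because $D^{(k)}$ is descending it represents $O^{\#D}$, giving $\ll D^{(k)}\gg_{a^6+1}=(-1)^{\#D-1}$ by the same invariance argument used in the base case. At each step, the skein identity (\ref{eqn:m=6(2)}) applied at the changed crossing yields
\begin{equation*}
\ll D^{(i)}\gg_{a^6+1}+\ll D^{(i+1)}\gg_{a^6+1}=-2\ll E^{(i)}\gg_{a^6+1},
\end{equation*}
where $E^{(i)}$ denotes the common oriented smoothing of that crossing, with $c(E^{(i)})=c(D)-1$.

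The key parity observation is that an oriented smoothing either merges two distinct components into one or splits a single self-crossed component into two, so $\#E^{(i)}=\#D\pm 1$ and in particular $\#E^{(i)}\not\equiv\#D\pmod{2}$. The strong induction hypothesis then yields $\ll E^{(i)}\gg_{a^6+1}=(-1)^{\#E^{(i)}-1}=(-1)^{\#D}$. Telescoping the chain of skein identities produces
\begin{equation*}
\ll D\gg_{a^6+1}=(-1)^k\ll D^{(k)}\gg_{a^6+1}-2(-1)^{\#D}\sum_{i=0}^{k-1}(-1)^i,
\end{equation*}
and substituting $\ll D^{(k)}\gg_{a^6+1}=(-1)^{\#D-1}$ gives $(-1)^{\#D-1}$ regardless of the parity of $k$: when $k$ is even the sum vanishes, and when $k$ is odd the two terms combine to leave $(-1)^{\#D-1}$. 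The main obstacle is nothing deeper than the parity bookkeeping between $\#D^{(i)}$ and $\#E^{(i)}$; once that is in hand, the argument is a routine telescoping, made possible because the mod-$(a^6+1)$ skein relation (\ref{eqn:m=6(2)}) is free of the writhe factors that would otherwise obstruct a direct crossing-switching computation.
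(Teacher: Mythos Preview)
Your proof is correct and is essentially a spelled-out version of the paper's argument. The paper simply observes that both $\ll\cdot\gg_{a^6+1}$ and $(-1)^{\#D-1}$ satisfy the skein relation (\ref{eqn:m=6(2)}) and agree on trivial links, then invokes a skein tree; your explicit induction on crossing number with the descending-diagram construction is precisely the standard unwinding of that skein-tree uniqueness argument.
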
 

\begin{proof} 
When we restrict  $\ll \cdot \gg_{a^6+1}$ to the family of oriented link diagrams, it is an oriented link invariant satisfying (\ref{eqn:m=6(1)}) and (\ref{eqn:m=6(2)}).  On the other hand, $(-1)^{\# D -1}$ is also such an invariant.  We see that 
 $\ll D \gg_{a^6+1} = (-1)^{\# D -1}$ for any link diagram $D$ by  considering a skein tree. 
\end{proof}

\begin{example}\label{example:m=6:8_1-1}
Consider the diagram $8_1$ of a 
spun trefoil in Yoshikawa's table \cite{Yo}. From Lemma~\ref{lem:m=6}, it follows that
\begin{align*}
\ll ~&\xy 
(-12,8);(-2,8) **@{-}, 
(2,8);(12,8) **@{-}, 
(-12,8);(-12,2) **@{-}, 
(12,8);(12,2) **@{-}, 
(-12,-8);(-2,-8) **@{-}, 
(-2,-8);(12,-8) **@{-}, 
(-12,-8);(-12,-2) **@{-}, 
(12,-8);(12,-2) **@{-},
(-2,8);(2,4) **@{-}, 
(-2,4);(2,8) **@{-}, 
(-2,4);(4,-2) **@{-}, 
(2,4);(-4,-2) **@{-}, 
(2.4,0.4);(4,2) **@{-},
(0.1,-1.9);(1.7,-0.2) **@{-},  
(0.1,-1.9);(-1.7,-0.3) **@{-},  
(-2.4,0.4);(-4,2) **@{-},
(-12,2);(-10.4,0.4) **@{-}, 
(-9.6,-0.4);(-8,-2) **@{-}, 
(-12,-2);(-8,2) **@{-},
(-8,2);(-6.4,0.4) **@{-}, 
(-5.6,-0.4);(-4,-2) **@{-}, 
(-8,-2);(-4,2) **@{-},
(12,2);(10.4,0.4) **@{-}, 
(9.6,-0.4);(8,-2) **@{-},
(12,-2);(8,2) **@{-},
(5.6,-0.4);(4,-2) **@{-}, 
(8,2);(6.6,0.4) **@{-},
(8,-2);(4,2) **@{-},
(-0.1,4.5);(-0.1,7.5) **@{-},
(0,4.5);(0,7.5) **@{-},
(0.1,4.5);(0.1,7.5) **@{-},
(-1.5,1.9);(1.5,1.9) **@{-},
(-1.5,2);(1.5,2) **@{-},
(-1.5,2.1);(1.5,2.1) **@{-}, 
(-9,0.4)*{\urcorner}, 
(-9,-0.8)*{\lrcorner}, 
(11,0.5)*{\urcorner}, 
(11,-0.8)*{\lrcorner}, 
(0,12) *{8_1},
\endxy~ \gg_{a^6+1} \\ 
& = x^2
\ll  ~\xy 
(-12,8);(-2,8) **@{-}, 
(2,8);(12,8) **@{-}, 
(-12,8);(-12,2) **@{-}, 
(12,8);(12,2) **@{-}, 
(-12,-8);(-2,-8) **@{-}, 
(-2,-8);(12,-8) **@{-}, 
(-12,-8);(-12,-2) **@{-}, 
(12,-8);(12,-2) **@{-},
(2,0);(4,-2) **@{-}, 
(-2,0);(-4,-2) **@{-}, 
(2.4,0.4);(4,2) **@{-},
(0.1,-1.9);(1.7,-0.2) **@{-},  
(0.1,-1.9);(-1.7,-0.3) **@{-},  
(-2.4,0.4);(-4,2) **@{-},
(-12,2);(-10.4,0.4) **@{-}, 
(-9.6,-0.4);(-8,-2) **@{-}, 
(-12,-2);(-8,2) **@{-},
(-8,2);(-6.4,0.4) **@{-}, 
(-5.6,-0.4);(-4,-2) **@{-}, 
(-8,-2);(-4,2) **@{-},
(12,2);(10.4,0.4) **@{-}, 
(9.6,-0.4);(8,-2) **@{-},
(12,-2);(8,2) **@{-},
(5.6,-0.4);(4,-2) **@{-}, 
(8,2);(6.6,0.4) **@{-},
(8,-2);(4,2) **@{-},
(-9,0.4)*{\urcorner}, 
(-9,-0.8)*{\lrcorner}, 
(11,0.5)*{\urcorner}, 
(11,-0.8)*{\lrcorner}, 
(-2,4);(-2,8) **\crv{(1,6)}, 
(2,8);(2,4) **\crv{(-1,6)},
(-2,4);(2,4) **\crv{(0,1)}, 
(-2,0);(2,0) **\crv{(0,3)},
 \endxy~ \gg_{a^6+1} 
 +xy
\ll ~\xy 
(-12,8);(-2,8) **@{-}, 
(2,8);(12,8) **@{-}, 
(-12,8);(-12,2) **@{-}, 
(12,8);(12,2) **@{-}, 
(-12,-8);(-2,-8) **@{-}, 
(-2,-8);(12,-8) **@{-}, 
(-12,-8);(-12,-2) **@{-}, 
(12,-8);(12,-2) **@{-},
(2,0);(4,-2) **@{-}, 
(-2,0);(-4,-2) **@{-},  
(2.4,0.4);(4,2) **@{-},
(0.1,-1.9);(1.7,-0.2) **@{-},  
(0.1,-1.9);(-1.7,-0.3) **@{-},  
(-2.4,0.4);(-4,2) **@{-},
(-12,2);(-10.4,0.4) **@{-}, 
(-9.6,-0.4);(-8,-2) **@{-}, 
(-12,-2);(-8,2) **@{-},
(-8,2);(-6.4,0.4) **@{-}, 
(-5.6,-0.4);(-4,-2) **@{-}, 
(-8,-2);(-4,2) **@{-},
(12,2);(10.4,0.4) **@{-}, 
(9.6,-0.4);(8,-2) **@{-},
(12,-2);(8,2) **@{-},
(5.6,-0.4);(4,-2) **@{-}, 
(8,2);(6.6,0.4) **@{-},
(8,-2);(4,2) **@{-},
(-9,0.4)*{\urcorner}, 
(-9,-0.8)*{\lrcorner}, 
(11,0.5)*{\urcorner}, 
(11,-0.8)*{\lrcorner}, 
(-2,4);(-2,8) **\crv{(1,6)}, 
(2,8);(2,4) **\crv{(-1,6)},
(-2,0);(-2,4) **\crv{(1,2)}, 
(2,4);(2,0) **\crv{(-1,2)},
 \endxy~ \gg_{a^6+1} \\ 
 &\hskip 1cm +yx
\ll  ~\xy 
(-12,8);(-2,8) **@{-}, 
(2,8);(12,8) **@{-}, 
(-12,8);(-12,2) **@{-}, 
(12,8);(12,2) **@{-}, 
(-12,-8);(-2,-8) **@{-}, 
(-2,-8);(12,-8) **@{-}, 
(-12,-8);(-12,-2) **@{-}, 
(12,-8);(12,-2) **@{-},
(2,0);(4,-2) **@{-}, 
(-2,0);(-4,-2) **@{-}, 
(2.4,0.4);(4,2) **@{-},
(0.1,-1.9);(1.7,-0.2) **@{-},  
(0.1,-1.9);(-1.7,-0.3) **@{-},  
(-2.4,0.4);(-4,2) **@{-},
(-12,2);(-10.4,0.4) **@{-}, 
(-9.6,-0.4);(-8,-2) **@{-}, 
(-12,-2);(-8,2) **@{-},
(-8,2);(-6.4,0.4) **@{-}, 
(-5.6,-0.4);(-4,-2) **@{-}, 
(-8,-2);(-4,2) **@{-},
(12,2);(10.4,0.4) **@{-}, 
(9.6,-0.4);(8,-2) **@{-},
(12,-2);(8,2) **@{-},
(5.6,-0.4);(4,-2) **@{-}, 
(8,2);(6.6,0.4) **@{-},
(8,-2);(4,2) **@{-},
(-9,0.4)*{\urcorner}, 
(-9,-0.8)*{\lrcorner}, 
(11,0.5)*{\urcorner}, 
(11,-0.8)*{\lrcorner},  
(-2,8);(2,8) **\crv{(0,5)}, 
(-2,4);(2,4) **\crv{(0,7)},
(-2,4);(2,4) **\crv{(0,1)}, 
(-2,0);(2,0) **\crv{(0,3)},
 \endxy~ \gg_{a^6+1} + y^2
\ll  ~\xy 
(-12,8);(-2,8) **@{-}, 
(2,8);(12,8) **@{-}, 
(-12,8);(-12,2) **@{-}, 
(12,8);(12,2) **@{-}, 
(-12,-8);(-2,-8) **@{-}, 
(-2,-8);(12,-8) **@{-}, 
(-12,-8);(-12,-2) **@{-}, 
(12,-8);(12,-2) **@{-},
(2,0);(4,-2) **@{-}, 
(-2,0);(-4,-2) **@{-},  
(2.4,0.4);(4,2) **@{-},
(0.1,-1.9);(1.7,-0.2) **@{-},  
(0.1,-1.9);(-1.7,-0.3) **@{-},  
(-2.4,0.4);(-4,2) **@{-},
(-12,2);(-10.4,0.4) **@{-}, 
(-9.6,-0.4);(-8,-2) **@{-}, 
(-12,-2);(-8,2) **@{-},
(-8,2);(-6.4,0.4) **@{-}, 
(-5.6,-0.4);(-4,-2) **@{-}, 
(-8,-2);(-4,2) **@{-},
(12,2);(10.4,0.4) **@{-}, 
(9.6,-0.4);(8,-2) **@{-},
(12,-2);(8,2) **@{-},
(5.6,-0.4);(4,-2) **@{-}, 
(8,2);(6.6,0.4) **@{-},
(8,-2);(4,2) **@{-},
(-9,0.4)*{\urcorner}, 
(-9,-0.8)*{\lrcorner}, 
(11,0.5)*{\urcorner}, 
(11,-0.8)*{\lrcorner}, 
(-2,0);(-2,4) **\crv{(1,2)}, 
(2,4);(2,0) **\crv{(-1,2)},
(-2,8);(2,8) **\crv{(0,5)}, 
(-2,4);(2,4) **\crv{(0,7)},
 \endxy~ \gg_{a^6+1} \\
 & = x^2 \ll O^2 \gg_{a^6+1} 
 + xy \ll ~\xy 
(2,0);(4,-2) **@{-}, 
(-2,0);(-4,-2) **@{-},  
(2.4,0.4);(4,2) **@{-},
(0.1,-1.9);(1.7,-0.2) **@{-},  
(0.1,-1.9);(-1.7,-0.3) **@{-},  
(-2.4,0.4);(-4,2) **@{-},
(-12,2);(-10.4,0.4) **@{-}, 
(-9.6,-0.4);(-8,-2) **@{-}, 
(-12,-2);(-8,2) **@{-},
(-8,2);(-6.4,0.4) **@{-}, 
(-5.6,-0.4);(-4,-2) **@{-}, 
(-8,-2);(-4,2) **@{-},
(12,2);(10.4,0.4) **@{-}, 
(9.6,-0.4);(8,-2) **@{-},
(12,-2);(8,2) **@{-},
(5.6,-0.4);(4,-2) **@{-}, 
(8,2);(6.6,0.4) **@{-},
(8,-2);(4,2) **@{-},
(-9,0.4)*{\urcorner}, 
(-9,-0.8)*{\lrcorner}, 
(11,0.5)*{\urcorner}, 
(11,-0.8)*{\lrcorner}, 
(-12,2);(-2,4) **\crv{(-14,5)}, 
(2,4);(12,2) **\crv{(14,5)},
(-2,0);(-2,4) **\crv{(1,2)}, 
(2,4);(2,0) **\crv{(-1,2)},
(-12,-2);(0,-5) **\crv{(-14,-5)},
(0,-5);(12,-2) **\crv{(14,-5)},
 \endxy~ \gg_{a^6+1} \\
&\hskip 1cm +yx \ll O^3  \gg_{a^6+1} 
+y^2 \ll O^2 \gg_{a^6+1} \\
&= -x^2+xy+yx-y^2=-(x-y)^2. 
\end{align*}
\end{example}

\begin{theorem}\label{prop:m=6}
Let $D$ be an oriented marked graph diagram with $h$ marked vertices, and let $L_+(D)$ be the positive resolution of $D$.  Then 
\begin{equation*}
\ll D \gg_{a^6+1}  = \epsilon (x-y)^h,
\end{equation*}
where $\epsilon = (-1)^{\# L_+(D)-1}$. 
\end{theorem}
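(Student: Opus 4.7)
The plan is to combine the state-sum formula~(\ref{state formulaB}) with Lemma~\ref{lem:m=6}. Reducing~(\ref{state formulaB}) modulo the ideal $I(a^{6}+1)$ and applying Lemma~\ref{lem:m=6} to each resolved link diagram $D_\sigma$ gives
\begin{equation*}
\ll D \gg_{a^6+1} = \sum_{\sigma \in \mathcal S(D)} x^{\sigma(\infty)} y^{\sigma(0)} (-1)^{\#D_\sigma - 1}.
\end{equation*}
The theorem will follow from the \emph{parity claim}
\begin{equation*}
\#D_\sigma \equiv \#L_+(D) + \sigma(0) \pmod{2} \quad \text{for every } \sigma \in \mathcal S(D),
\end{equation*}
which is equivalent to saying that flipping the marker at any one vertex toggles the parity of the component count of the resulting link diagram.

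To prove the parity claim, I would argue locally at a marked vertex $v$, labelling its four incident edges cyclically as $1 = \mathrm{NE}$, $2 = \mathrm{SE}$, $3 = \mathrm{SW}$, $4 = \mathrm{NW}$ and writing $\epsilon_i \in \{\mathrm{in},\mathrm{out}\}$ for the in/out status of edge $i$ at $v$. Orientation-consistency of \emph{both} resolutions $T_\infty$ (pairing the edges $\{1,4\}$ and $\{2,3\}$) and $T_0$ (pairing $\{1,2\}$ and $\{3,4\}$) at $v$ requires $\epsilon_1 \ne \epsilon_4$, $\epsilon_2 \ne \epsilon_3$, $\epsilon_1 \ne \epsilon_2$ and $\epsilon_3 \ne \epsilon_4$, whose only solutions are $\epsilon_1 = \epsilon_3 \ne \epsilon_2 = \epsilon_4$. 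Hence the four endpoints on the boundary of a small disk around $v$ consist of two ``in'' endpoints ($\{1,3\}$) and two ``out'' endpoints ($\{2,4\}$), and every outside arc must join an ``in'' endpoint to an ``out'' endpoint. The admissible outside pairings are therefore $\{\{1,2\},\{3,4\}\}$ or $\{\{1,4\},\{2,3\}\}$, while $\{\{1,3\},\{2,4\}\}$ is forbidden. A direct cycle count shows that for either of the two permissible outside pairings, switching between $T_\infty$ and $T_0$ alters the number of local cycles by exactly one, which proves the parity claim.

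Granting the claim, substituting $(-1)^{\#D_\sigma - 1} = \epsilon\,(-1)^{\sigma(0)}$ with $\epsilon = (-1)^{\#L_+(D)-1}$ into the state sum yields
\begin{equation*}
\ll D \gg_{a^6+1} = \epsilon \sum_{\sigma \in \mathcal S(D)} x^{\sigma(\infty)} (-y)^{\sigma(0)} = \epsilon (x-y)^h,
\end{equation*}
where the final equality is the $h$-fold binomial expansion, contributing one factor $x + (-y)$ per marked vertex. The main obstacle is the parity claim, and the decisive observation is that the orientation convention at a marked vertex excludes exactly the pairing $\{\{1,3\},\{2,4\}\}$---the unique outside pairing under which the two inside resolutions would yield link diagrams with the same number of components.
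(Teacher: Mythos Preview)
Your proof is correct and follows essentially the same route as the paper: both combine the state-sum formula~(\ref{state formulaB}) with Lemma~\ref{lem:m=6}, establish the parity relation $\#D_\sigma \equiv \#L_+(D) + \sigma(0) \pmod 2$, and sum via the binomial expansion. The only difference is that the paper simply asserts the parity relation without justification, whereas you supply a careful local argument showing that the orientation convention at a marked vertex forces the outside pairing to be one of $\{\{1,2\},\{3,4\}\}$ or $\{\{1,4\},\{2,3\}\}$, under each of which the two inside resolutions differ in component count by exactly one; this extra detail is a welcome addition.
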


\begin{proof} 
By (\ref{state formulaB}), 
\begin{equation}\label{eqn:state:m=6}
\ll D \gg_{a^6+1} = \sum_{\sigma \in \mathcal S(D)}
x^{\sigma(\infty)}y^{\sigma(0)}
\ll  D_\sigma \gg_{a^6+1}.   
\end{equation}
Let $\sigma^\ast$ be the state assigning 
$T_\infty$ to every marked vertex of $D$.  Then 
$D_{\sigma^\ast} = L_+(D)$.  By Lemma~\ref{lem:m=6}, $\ll  D_{\sigma^\ast} \gg_{a^6+1} =  (-1)^{\# L_+(D)-1}= \epsilon$.  
Since $\sigma^\ast(\infty)=h$ and  $\sigma^\ast(0)=0$, the state $\sigma^\ast$ contributes 
$\epsilon x^h$ in the right hand side of (\ref{eqn:state:m=6}).  
Let $\sigma$ be a state of $D$, which is obtained from $\sigma^\ast$ by switching  $T_\infty$ and $T_0$ on $k$ marked vertices.  Then $\# D_\sigma - \# L_+(D) \equiv  
k \mod 2$, and by Lemma~\ref{lem:m=6}, $\ll  D_{\sigma} \gg_{a^6+1} =  (-1)^{\# D_{\sigma} -1} =  (-1)^{\# L_+(D) -1 +k}= \epsilon (-1)^k$.  
 Since $\sigma(\infty)=h-k$ and  $\sigma(0)=k$, 
the contribution $x^{\sigma(\infty)}y^{\sigma(0)} \ll  D_\sigma \gg_{a^6+1}$ 
of $\sigma$ is 
$ \epsilon (-1)^k x^{h-k}y^k$.  Since every state $\sigma$ is obtained from $\sigma^\ast$ by choosing each subset of the marked vertices of $D$ and 
switching  $T_\infty$ and $T_0$ there, we see that  $\ll D \gg_{a^6+1} = \epsilon (x-y)^h$.  
\end{proof}

\begin{remark}
From Theorem~\ref{prop:m=6}, we see that all information the invariant $\ll D \gg_{a^6+1}$ has is the number of marked vertices of $D$ and the parity of $\# L_+(D)$. 
By this reason or by Propositions~\ref{lem-m07} and \ref{lem-m08} with 
$\Delta(a)=0$, we see that  $\ll D \gg_{a^6+1}$ is invariant under Yoshikawa moves $\Gamma_7$ and $\Gamma_8$.  
In order to make it invariant under Yoshikawa move $\Gamma_6$, we may consider it up to multiplication by powers of $-x+y$ and $x-y$.  However, this makes $\ll D \gg_{a^6+1}$ the same value for all $D$.  
\end{remark}

Suppose $m=12$.   Then 
for any positive integer $\mu$, 
\begin{equation}\label{eqn:m=12(1)}
\ll O^\mu \gg_{a^{12}+1} =1  
\end{equation} 
and for any skein triple $(D_+, D_-, D_0)$ of  link diagrams, 
\begin{equation}\label{eqn:m=12(2)}
-a^3 \ll D_+ \gg_{a^{12}+1} +  a^{-3} \ll D_- \gg_{a^{12}+1} = (a^{-3} - a^3) \ll D_0 \gg_{a^{12}+1}. 
\end{equation}

\begin{lemma}\label{lem:m=12}
For any  link diagram $D$, 
$\ll D \gg_{a^{12}+1} = 1$. 
\end{lemma}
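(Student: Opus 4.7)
The plan is to prove the lemma by induction on the number of crossings of $D$, using (\ref{eqn:m=m(1)}) as the base case and a local skein-relation argument as the inductive step, in the same spirit as the proof of Lemma~\ref{lem:m=6}.

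First I would record the algebraic identities available in the quotient $\mathbb Z[a^{-1},a,x,y]/I(a^{12}+1)$. Since $a^{12}+1\equiv 0$, multiplying by $a^{-6}$ gives $a^6+a^{-6}\equiv 0$, hence $a^{-6}+1+a^6\equiv 1$; combined with (\ref{eqn:m=m(1)}) this already yields $\ll O^\mu\gg_{a^{12}+1}=1$ for every $\mu\ge 1$, settling the base case $c(D)=0$. In the same quotient, the reformulated skein relation (\ref{eqn:m=m(2)}) can be rewritten, after multiplying by $a^3$ and using that $a^{\pm 6}$ are units, in the convenient form
\[
\ll D_- \gg_{a^{12}+1} \,-\, \ll D_0 \gg_{a^{12}+1} \;=\; a^6\bigl(\ll D_+ \gg_{a^{12}+1} \,-\, \ll D_0 \gg_{a^{12}+1}\bigr),
\]
which is the identity that will drive the inductive step.

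Suppose now the lemma holds for all link diagrams with at most $n-1$ crossings, and let $D$ have $c(D)=n\ge 1$. For any crossing $c$ of $D$, let $D^{\mathrm{sw}}$ be the diagram obtained by switching $c$ and $D^0$ the diagram obtained by smoothing $c$ along the orientation. Since $c(D^0)=n-1$, the inductive hypothesis yields $\ll D^0 \gg_{a^{12}+1}=1$, and with $\{D,D^{\mathrm{sw}}\}=\{D_+,D_-\}$ the key identity above rearranges to $\ll D^{\mathrm{sw}} \gg_{a^{12}+1}-1=a^{\pm 6}\bigl(\ll D \gg_{a^{12}+1}-1\bigr)$, with the sign determined by the sign of $c$ in $D$. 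Because $a^{\pm 6}$ is a unit, this is equivalent to
\[
\ll D \gg_{a^{12}+1}=1 \iff \ll D^{\mathrm{sw}} \gg_{a^{12}+1}=1,
\]
so switching any single crossing of $D$ preserves the property that its value modulo $a^{12}+1$ equals $1$.

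To finish, I invoke the classical fact that, by switching some subset of its crossings, any oriented link diagram $D$ can be turned into an ascending diagram $D_{\mathrm{asc}}$, and such a diagram necessarily presents the trivial $\mu$-component link (where $\mu$ is the number of components of $D$). Iterating the single-switch invariance just proved reduces the determination of $\ll D \gg_{a^{12}+1}$ to that of $\ll D_{\mathrm{asc}} \gg_{a^{12}+1}$, in the sense that each equals $1$ if and only if the other does. Since $\ll\cdot\gg$ restricted to link diagrams is an ambient isotopy invariant (Theorem~\ref{thm-skein-rel} provides invariance under $\Gamma_1,\Gamma'_1,\Gamma_2,\Gamma_3$, which are exactly the Reidemeister moves), $D_{\mathrm{asc}}$ is isotopic to $O^\mu$, so $\ll D_{\mathrm{asc}} \gg_{a^{12}+1}=\ll O^\mu \gg_{a^{12}+1}=1$, and therefore $\ll D \gg_{a^{12}+1}=1$, completing the induction. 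The only point requiring any care is the sign-bookkeeping in the skein step, which is routine since $a^{\pm 6}$ is in any case a unit.
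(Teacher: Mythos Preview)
Your proof is correct and follows essentially the same approach as the paper: both arguments rest on the fact that $\ll\cdot\gg_{a^{12}+1}$ and the constant function $1$ satisfy the same skein relation with unit leading coefficients and agree on trivial links, hence coincide. The paper states this uniqueness in one line, while you have written out the underlying skein-tree induction explicitly.
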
 

\begin{proof} 
When we restrict  $\ll \cdot \gg_{a^{12}+1}$ to the family of oriented link diagrams, it is an oriented link invariant satisfying (\ref{eqn:m=12(1)}) and (\ref{eqn:m=12(2)}).  On the other hand, the constant function $1$ is also such an invariant.  Since the coefficients $-a^3$ and $a^{-3}$ in the left hand side of (\ref{eqn:m=12(2)}) are units in 
${\mathbb Z}[a^{-1}, a, x, y]/ I(a^{12}+1)$, 
we have $\ll D \gg_{a^{12}+1} = 1$ for any link diagram $D$. 
\end{proof}

\begin{theorem}\label{prop:m=12}
Let $D$ be an oriented marked graph diagram with $h$ marked vertices.  Then 
\begin{equation*}
\ll D \gg_{a^{12}+1}  =  (x+y)^h.
\end{equation*}
\end{theorem}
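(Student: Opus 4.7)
The plan is to mimic the proof of Theorem~\ref{prop:m=6} but using Lemma~\ref{lem:m=12} in place of Lemma~\ref{lem:m=6}. Since the state-sum formula (\ref{state formulaB}) is a polynomial identity in $\mathbb Z[a^{-1},a,x,y]$, it descends to the quotient $\mathbb Z[a^{-1},a,x,y]/I(a^{12}+1)$. Thus
\begin{equation*}
\ll D \gg_{a^{12}+1} \; = \; \sum_{\sigma \in \mathcal S(D)} x^{\sigma(\infty)} y^{\sigma(0)} \ll D_\sigma \gg_{a^{12}+1}.
\end{equation*}
For each state $\sigma \in \mathcal S(D)$, the resolution $D_\sigma$ is an oriented link diagram (with no marked vertices), so Lemma~\ref{lem:m=12} applies and gives $\ll D_\sigma \gg_{a^{12}+1} = 1$.

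Substituting this into the display above reduces the claim to a combinatorial identity. Since $D$ has $h$ marked vertices, the set $\mathcal S(D)$ has $2^h$ elements, and for every $\sigma$ we have $\sigma(\infty)+\sigma(0)=h$. Grouping states by the number $k=\sigma(\infty)$ of $T_\infty$-assignments yields $\binom{h}{k}$ states with contribution $x^k y^{h-k}$ each, so
\begin{equation*}
\ll D \gg_{a^{12}+1} \; = \; \sum_{k=0}^{h} \binom{h}{k} x^{k} y^{h-k} \; = \; (x+y)^{h}
\end{equation*}
by the binomial theorem, which is the desired identity.

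There is no serious obstacle: the real content sits in Lemma~\ref{lem:m=12} (already established via the fact that $-a^3$ and $a^{-3}$ become units modulo $a^{12}+1$, so that the skein relation determines $\ll \cdot \gg_{a^{12}+1}$ on link diagrams from its value on the unknot); once that lemma is in hand, the state-sum collapses immediately.
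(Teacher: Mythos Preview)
Your proof is correct and follows essentially the same approach as the paper: apply the state-sum formula (\ref{state formulaB}), invoke Lemma~\ref{lem:m=12} to replace each $\ll D_\sigma \gg_{a^{12}+1}$ by $1$, and then observe that the resulting sum over states is the binomial expansion of $(x+y)^h$. The paper's version is slightly terser in the final combinatorial step, but the argument is the same.
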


\begin{proof} 
By (\ref{state formulaB}), 
\begin{equation*}
\ll D \gg_{a^{12}+1} = \sum_{\sigma \in \mathcal S(D)}
x^{\sigma(\infty)}y^{\sigma(0)}
\ll  D_\sigma \gg_{a^{12}+1}.   
\end{equation*} 
By Lemma~\ref{lem:m=12}, for any $\sigma$, 
$\ll  D_{\sigma} \gg_{a^{12}+1} = 1$.  
Since every state $\sigma$ is obtained by choosing each subset of the marked vertices of $D$ for assignment of 
 $T_\infty$, we see that  $\ll D \gg_{a^{12}+1} =  (x+y)^h$.  
\end{proof}

\begin{remark}
By Theorem~\ref{prop:m=12}, the invariant $\ll D \gg_{a^{12}+1}$ is determined by the number of marked vertices.  It is invariant under Yoshikawa moves $\Gamma_7$ and $\Gamma_8$. 
 In order to make it invariant under Yoshikawa move $\Gamma_6$, we may consider it up to multiplication by powers of $x+y$.  However, this makes $\ll D \gg_{a^{12}+1}$ the same value for all $D$.  
 \end{remark}

Now we consider another case that $m=9$.  

Suppose $m=9$.  Then 
for any positive integer $\mu$, 
\begin{equation*}
\ll O^\mu \gg_{a^9+1} =  (a^{-6}+1+a^6)^{\mu-1} = 
\left\{
\begin{array}{ll}
1  &    (\mu=1)  \\
3^{\mu-2}(a^{-6}+1+a^6)  &     (\mu \geq 2)
\end{array}
\right.
\end{equation*}
and for any skein triple $(D_+, D_-, D_0)$ of  link diagrams, 
\begin{equation*}
\ll D_+ \gg_{a^9+1} - \ll D_- \gg_{a^9+1} = (a^3 - a^{-3}) \ll D_0 \gg_{a^9+1}. 
\end{equation*}

Instead of studying $\ll D_+ \gg_{a^9+1}$, we here discuss a weaker version as follows.  

Consider the ideal $I(a^9+1, a^{-6} + 1 + a^6)$ of  $\mathbb Z[a^{-1},a,x,y]$ 
generated by $a^9+1$ and $a^{-6} + 1 + a^6$.  We denote by 
$\ll D_+ \gg_{a^9+1}^\ast$ the polynomial $\ll D \gg$ modulo the ideal $I(a^9+1, a^{-6} + 1 + a^6)$, i.e.,  
$$
\ll D \gg_{a^9+1}^\ast   ~=~  \ll D \gg + I(a^9+1, a^{-6} + 1 + a^6) ~   \in \mathbb Z[a^{-1},a,x,y]/ I(a^9+1, a^{-6} + 1 + a^6). 
$$ 

Note that the ideal $I(a^9+1, a^{-6} + 1 + a^6)$ is equal to the ideal $I(a^6-a^3+1)$. 

Then $\ll \cdot \gg_{a^9+1}^\ast$ is an invariant of oriented marked graphs satisfying that 
for any positive integer $\mu$, 
\begin{equation}\label{eqn:m=9*(1)}
\ll O^\mu \gg_{a^9+1}^\ast =   
\left\{
\begin{array}{ll}
1  &    (\mu=1)  \\
0  &     (\mu \geq 2)
\end{array}
\right.
\end{equation}
and  for any skein triple $(D_+, D_-, D_0)$ of  link diagrams, 
\begin{equation}\label{eqn:m=9*(2)}
\ll D_+ \gg_{a^9+1}^\ast - \ll D_- \gg_{a^9+1}^\ast = (a^3 - a^{-3}) \ll D_0 \gg_{a^9+1}^\ast. 
\end{equation}

\begin{lemma}\label{lem:m=9*}
Let $D$ be an oriented  link diagram and let $\nabla[D](z)$ be the Conway polynomial.  Then  
$\ll D \gg_{a^9+1}^\ast = \nabla[D] (a^3 - a^{-3})$.   
\end{lemma}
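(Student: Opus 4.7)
The plan is to show that, when restricted to oriented link diagrams, $\ll\cdot\gg_{a^9+1}^\ast$ satisfies exactly the two axioms that uniquely characterize the Conway polynomial evaluated at $z = a^3 - a^{-3}$, and then to invoke that characterization. First I would observe that $\ll\cdot\gg$ is an oriented link isotopy invariant by Theorem~\ref{thm-skein-rel}, so its reduction $\ll\cdot\gg_{a^9+1}^\ast$ is likewise a link invariant taking values in the ring $R := \mathbb Z[a^{-1},a,x,y]/I(a^6-a^3+1)$. Note that $I(a^9+1,\,a^{-6}+1+a^6) = I(a^6-a^3+1)$, so in $R$ one has $a^9 \equiv -1$ and hence $a^{-9} \equiv -1$.

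Next I would verify the two Conway axioms for $\ll\cdot\gg_{a^9+1}^\ast$. The unknot value $\ll O\gg_{a^9+1}^\ast = 1$ is immediate from Theorem~\ref{thm-skein-rel}(1). The skein identity $\ll D_+\gg_{a^9+1}^\ast - \ll D_-\gg_{a^9+1}^\ast = (a^3-a^{-3})\,\ll D_0\gg_{a^9+1}^\ast$ is obtained by reducing the identity $a^{-9}\ll D_+\gg - a^9\ll D_-\gg = (a^{-3}-a^3)\ll D_0\gg$ of Theorem~\ref{thm-skein-rel}(3) modulo $I(a^9+1)$: the left-hand side becomes $-\ll D_+\gg + \ll D_-\gg$, and multiplying through by $-1$ yields the stated form (this is precisely (\ref{eqn:m=9*(2)})).

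Then I would invoke the uniqueness of the Conway polynomial: $\nabla[\cdot](z)$ is the unique oriented link invariant with values in $\mathbb Z[z]$ satisfying $\nabla[O](z)=1$ together with $\nabla[D_+](z) - \nabla[D_-](z) = z\,\nabla[D_0](z)$. Composing with the ring homomorphism $\mathbb Z[z] \to R$ sending $z \mapsto a^3-a^{-3}$ produces an $R$-valued link invariant $L \mapsto \nabla[L](a^3-a^{-3})$ satisfying the same two axioms as $\ll\cdot\gg_{a^9+1}^\ast$, and uniqueness delivers the claimed equality.

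The one delicate point is the uniqueness step. Its standard proof inducts on the crossing number together with an ascending-diagram complexity, which requires knowing the value of the invariant on every trivial link $O^\mu$. For $\ll\cdot\gg_{a^9+1}^\ast$ these values are $1$ and $0$ (for $\mu \geq 2$) by (\ref{eqn:m=9*(1)}); for $\nabla[\cdot](a^3-a^{-3})$ they follow from applying the skein relation to a Reidemeister-I kink placed on one component of a split diagram and cancelling the factor $a^3-a^{-3}$. Because $R \cong \mathbb Z[\zeta_{18}][x,y]$ is an integral domain in which $a^3-a^{-3}$ is nonzero (hence not a zero divisor), this cancellation is legitimate, and the inductive argument proceeds without complication, completing the identification.
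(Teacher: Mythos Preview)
Your proposal is correct and follows essentially the same approach as the paper: both argue that $\ll\cdot\gg_{a^9+1}^\ast$ and $\nabla[\cdot](a^3-a^{-3})$ are link invariants satisfying the same normalization (\ref{eqn:m=9*(1)}) and skein relation (\ref{eqn:m=9*(2)}), hence agree by the standard uniqueness for the Conway polynomial. Your write-up is considerably more careful than the paper's three-sentence proof, in particular justifying the cancellation of $a^3-a^{-3}$ by identifying the quotient ring with the integral domain $\mathbb Z[\zeta_{18}][x,y]$; this is a genuine improvement in rigor over the paper, which simply asserts the conclusion.
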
 

\begin{proof} 
When we restrict  $\ll \cdot \gg_{a^9+1}^\ast$ to the family of oriented link diagrams, it is an oriented link invariant satisfying (\ref{eqn:m=9*(1)}) and (\ref{eqn:m=9*(2)}).  On the other hand, $\nabla[\cdot] (a^3 - a^{-3})$ is also such an invariant.  Thus we have $\ll D \gg_{a^9+1}^\ast = \nabla[D] (a^3 - a^{-3})$. 
\end{proof}

\begin{theorem}\label{prop:m=9*}
Let $D$ be an oriented marked graph diagram.  Then 
\begin{equation*}
\ll D \gg_{a^9+1}^\ast = \sum_{\sigma \in \mathcal S(D)}
x^{\sigma(\infty)}y^{\sigma(0)}
\nabla[{D_\sigma}] (a^3 - a^{-3}).   
\end{equation*} 
\end{theorem}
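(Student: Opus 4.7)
The plan is to combine the state-sum formula (\ref{state formulaB}) with Lemma~\ref{lem:m=9*}. The state-sum formula expresses $\ll D \gg$ as a sum over all states $\sigma \in \mathcal S(D)$ of contributions $x^{\sigma(\infty)} y^{\sigma(0)} \ll D_\sigma \gg$, where each $D_\sigma$ is an ordinary oriented link diagram obtained by resolving every marked vertex of $D$ according to $\sigma$.

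First I would reduce the state-sum formula modulo the ideal $I(a^9+1,\, a^{-6}+1+a^6)$. Since reduction modulo an ideal is a ring homomorphism and the coefficients $x^{\sigma(\infty)} y^{\sigma(0)}$ already lie in $\mathbb Z[x,y] \subset \mathbb Z[a^{-1},a,x,y]$, the identity (\ref{state formulaB}) descends to
\begin{equation*}
\ll D \gg_{a^9+1}^\ast \;=\; \sum_{\sigma \in \mathcal S(D)} x^{\sigma(\infty)} y^{\sigma(0)} \ll D_\sigma \gg_{a^9+1}^\ast.
\end{equation*}

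Next I would apply Lemma~\ref{lem:m=9*} to each summand. Because every resolved diagram $D_\sigma$ is an oriented link diagram (with no marked vertices), the lemma gives
\begin{equation*}
\ll D_\sigma \gg_{a^9+1}^\ast \;=\; \nabla[D_\sigma](a^3 - a^{-3}).
\end{equation*}
Substituting this into the previous equation yields exactly the asserted formula. No obstacle arises: all the content is already packaged in the state-sum formula and in Lemma~\ref{lem:m=9*}, and the only thing to verify is that the two ingredients combine legitimately, which they do because passing to the quotient ring is $\mathbb Z[x,y]$-linear.
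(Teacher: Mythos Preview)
Your proof is correct and follows essentially the same approach as the paper: reduce the state-sum formula (\ref{state formulaB}) modulo the ideal and then apply Lemma~\ref{lem:m=9*} to each $\ll D_\sigma \gg_{a^9+1}^\ast$. The paper's own proof is just these two sentences.
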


\begin{proof} 
By (\ref{state formulaB}), 
\begin{equation*}
\ll D \gg_{a^9+1}^\ast = \sum_{\sigma \in \mathcal S(D)}
x^{\sigma(\infty)}y^{\sigma(0)}
\ll  D_\sigma \gg_{a^9+1}^\ast.  
\end{equation*}
By Lemma~\ref{lem:m=9*}, we have the result. 
\end{proof}

\begin{example}\label{example:m=9*:8_1-1}
Consider the diagram $8_1$ of a 
spun trefoil in Yoshikawa's table \cite{Yo}. 
\begin{align*}
\ll ~&\xy 
(-12,8);(-2,8) **@{-}, 
(2,8);(12,8) **@{-}, 
(-12,8);(-12,2) **@{-}, 
(12,8);(12,2) **@{-}, 
(-12,-8);(-2,-8) **@{-}, 
(-2,-8);(12,-8) **@{-}, 
(-12,-8);(-12,-2) **@{-}, 
(12,-8);(12,-2) **@{-},
(-2,8);(2,4) **@{-}, 
(-2,4);(2,8) **@{-}, 
(-2,4);(4,-2) **@{-}, 
(2,4);(-4,-2) **@{-}, 
(2.4,0.4);(4,2) **@{-},
(0.1,-1.9);(1.7,-0.2) **@{-},  
(0.1,-1.9);(-1.7,-0.3) **@{-},  
(-2.4,0.4);(-4,2) **@{-},
(-12,2);(-10.4,0.4) **@{-}, 
(-9.6,-0.4);(-8,-2) **@{-}, 
(-12,-2);(-8,2) **@{-},
(-8,2);(-6.4,0.4) **@{-}, 
(-5.6,-0.4);(-4,-2) **@{-}, 
(-8,-2);(-4,2) **@{-},
(12,2);(10.4,0.4) **@{-}, 
(9.6,-0.4);(8,-2) **@{-},
(12,-2);(8,2) **@{-},
(5.6,-0.4);(4,-2) **@{-}, 
(8,2);(6.6,0.4) **@{-},
(8,-2);(4,2) **@{-},
(-0.1,4.5);(-0.1,7.5) **@{-},
(0,4.5);(0,7.5) **@{-},
(0.1,4.5);(0.1,7.5) **@{-},
(-1.5,1.9);(1.5,1.9) **@{-},
(-1.5,2);(1.5,2) **@{-},
(-1.5,2.1);(1.5,2.1) **@{-}, 
(-9,0.4)*{\urcorner}, 
(-9,-0.8)*{\lrcorner}, 
(11,0.5)*{\urcorner}, 
(11,-0.8)*{\lrcorner}, 
(0,12) *{8_1},
\endxy~ \gg_{a^9+1}^\ast  \\ 
& = x^2
\ll  ~\xy 
(-12,8);(-2,8) **@{-}, 
(2,8);(12,8) **@{-}, 
(-12,8);(-12,2) **@{-}, 
(12,8);(12,2) **@{-}, 
(-12,-8);(-2,-8) **@{-}, 
(-2,-8);(12,-8) **@{-}, 
(-12,-8);(-12,-2) **@{-}, 
(12,-8);(12,-2) **@{-},
(2,0);(4,-2) **@{-}, 
(-2,0);(-4,-2) **@{-}, 
(2.4,0.4);(4,2) **@{-},
(0.1,-1.9);(1.7,-0.2) **@{-},  
(0.1,-1.9);(-1.7,-0.3) **@{-},  
(-2.4,0.4);(-4,2) **@{-},
(-12,2);(-10.4,0.4) **@{-}, 
(-9.6,-0.4);(-8,-2) **@{-}, 
(-12,-2);(-8,2) **@{-},
(-8,2);(-6.4,0.4) **@{-}, 
(-5.6,-0.4);(-4,-2) **@{-}, 
(-8,-2);(-4,2) **@{-},
(12,2);(10.4,0.4) **@{-}, 
(9.6,-0.4);(8,-2) **@{-},
(12,-2);(8,2) **@{-},
(5.6,-0.4);(4,-2) **@{-}, 
(8,2);(6.6,0.4) **@{-},
(8,-2);(4,2) **@{-},
(-9,0.4)*{\urcorner}, 
(-9,-0.8)*{\lrcorner}, 
(11,0.5)*{\urcorner}, 
(11,-0.8)*{\lrcorner}, 
(-2,4);(-2,8) **\crv{(1,6)}, 
(2,8);(2,4) **\crv{(-1,6)},
(-2,4);(2,4) **\crv{(0,1)}, 
(-2,0);(2,0) **\crv{(0,3)},
 \endxy~ \gg_{a^9+1}^\ast  
 +xy
\ll ~\xy 
(-12,8);(-2,8) **@{-}, 
(2,8);(12,8) **@{-}, 
(-12,8);(-12,2) **@{-}, 
(12,8);(12,2) **@{-}, 
(-12,-8);(-2,-8) **@{-}, 
(-2,-8);(12,-8) **@{-}, 
(-12,-8);(-12,-2) **@{-}, 
(12,-8);(12,-2) **@{-},
(2,0);(4,-2) **@{-}, 
(-2,0);(-4,-2) **@{-},  
(2.4,0.4);(4,2) **@{-},
(0.1,-1.9);(1.7,-0.2) **@{-},  
(0.1,-1.9);(-1.7,-0.3) **@{-},  
(-2.4,0.4);(-4,2) **@{-},
(-12,2);(-10.4,0.4) **@{-}, 
(-9.6,-0.4);(-8,-2) **@{-}, 
(-12,-2);(-8,2) **@{-},
(-8,2);(-6.4,0.4) **@{-}, 
(-5.6,-0.4);(-4,-2) **@{-}, 
(-8,-2);(-4,2) **@{-},
(12,2);(10.4,0.4) **@{-}, 
(9.6,-0.4);(8,-2) **@{-},
(12,-2);(8,2) **@{-},
(5.6,-0.4);(4,-2) **@{-}, 
(8,2);(6.6,0.4) **@{-},
(8,-2);(4,2) **@{-},
(-9,0.4)*{\urcorner}, 
(-9,-0.8)*{\lrcorner}, 
(11,0.5)*{\urcorner}, 
(11,-0.8)*{\lrcorner}, 
(-2,4);(-2,8) **\crv{(1,6)}, 
(2,8);(2,4) **\crv{(-1,6)},
(-2,0);(-2,4) **\crv{(1,2)}, 
(2,4);(2,0) **\crv{(-1,2)},
 \endxy~ \gg_{a^9+1}^\ast \\ 
 &\hskip 1cm +yx
\ll  ~\xy 
(-12,8);(-2,8) **@{-}, 
(2,8);(12,8) **@{-}, 
(-12,8);(-12,2) **@{-}, 
(12,8);(12,2) **@{-}, 
(-12,-8);(-2,-8) **@{-}, 
(-2,-8);(12,-8) **@{-}, 
(-12,-8);(-12,-2) **@{-}, 
(12,-8);(12,-2) **@{-},
(2,0);(4,-2) **@{-}, 
(-2,0);(-4,-2) **@{-}, 
(2.4,0.4);(4,2) **@{-},
(0.1,-1.9);(1.7,-0.2) **@{-},  
(0.1,-1.9);(-1.7,-0.3) **@{-},  
(-2.4,0.4);(-4,2) **@{-},
(-12,2);(-10.4,0.4) **@{-}, 
(-9.6,-0.4);(-8,-2) **@{-}, 
(-12,-2);(-8,2) **@{-},
(-8,2);(-6.4,0.4) **@{-}, 
(-5.6,-0.4);(-4,-2) **@{-}, 
(-8,-2);(-4,2) **@{-},
(12,2);(10.4,0.4) **@{-}, 
(9.6,-0.4);(8,-2) **@{-},
(12,-2);(8,2) **@{-},
(5.6,-0.4);(4,-2) **@{-}, 
(8,2);(6.6,0.4) **@{-},
(8,-2);(4,2) **@{-},
(-9,0.4)*{\urcorner}, 
(-9,-0.8)*{\lrcorner}, 
(11,0.5)*{\urcorner}, 
(11,-0.8)*{\lrcorner},  
(-2,8);(2,8) **\crv{(0,5)}, 
(-2,4);(2,4) **\crv{(0,7)},
(-2,4);(2,4) **\crv{(0,1)}, 
(-2,0);(2,0) **\crv{(0,3)},
 \endxy~ \gg_{a^9+1}^\ast  
 + y^2
\ll  ~\xy 
(-12,8);(-2,8) **@{-}, 
(2,8);(12,8) **@{-}, 
(-12,8);(-12,2) **@{-}, 
(12,8);(12,2) **@{-}, 
(-12,-8);(-2,-8) **@{-}, 
(-2,-8);(12,-8) **@{-}, 
(-12,-8);(-12,-2) **@{-}, 
(12,-8);(12,-2) **@{-},
(2,0);(4,-2) **@{-}, 
(-2,0);(-4,-2) **@{-},  
(2.4,0.4);(4,2) **@{-},
(0.1,-1.9);(1.7,-0.2) **@{-},  
(0.1,-1.9);(-1.7,-0.3) **@{-},  
(-2.4,0.4);(-4,2) **@{-},
(-12,2);(-10.4,0.4) **@{-}, 
(-9.6,-0.4);(-8,-2) **@{-}, 
(-12,-2);(-8,2) **@{-},
(-8,2);(-6.4,0.4) **@{-}, 
(-5.6,-0.4);(-4,-2) **@{-}, 
(-8,-2);(-4,2) **@{-},
(12,2);(10.4,0.4) **@{-}, 
(9.6,-0.4);(8,-2) **@{-},
(12,-2);(8,2) **@{-},
(5.6,-0.4);(4,-2) **@{-}, 
(8,2);(6.6,0.4) **@{-},
(8,-2);(4,2) **@{-},
(-9,0.4)*{\urcorner}, 
(-9,-0.8)*{\lrcorner}, 
(11,0.5)*{\urcorner}, 
(11,-0.8)*{\lrcorner}, 
(-2,0);(-2,4) **\crv{(1,2)}, 
(2,4);(2,0) **\crv{(-1,2)},
(-2,8);(2,8) **\crv{(0,5)}, 
(-2,4);(2,4) **\crv{(0,7)},
 \endxy~ \gg_{a^9+1}^\ast \\ 
 &= x^2 \ll O^2 \gg_{a^9+1}^\ast  
 + xy \ll ~\xy 
(2,0);(4,-2) **@{-}, 
(-2,0);(-4,-2) **@{-},  
(2.4,0.4);(4,2) **@{-},
(0.1,-1.9);(1.7,-0.2) **@{-},  
(0.1,-1.9);(-1.7,-0.3) **@{-},  
(-2.4,0.4);(-4,2) **@{-},
(-12,2);(-10.4,0.4) **@{-}, 
(-9.6,-0.4);(-8,-2) **@{-}, 
(-12,-2);(-8,2) **@{-},
(-8,2);(-6.4,0.4) **@{-}, 
(-5.6,-0.4);(-4,-2) **@{-}, 
(-8,-2);(-4,2) **@{-},
(12,2);(10.4,0.4) **@{-}, 
(9.6,-0.4);(8,-2) **@{-},
(12,-2);(8,2) **@{-},
(5.6,-0.4);(4,-2) **@{-}, 
(8,2);(6.6,0.4) **@{-},
(8,-2);(4,2) **@{-},
(-9,0.4)*{\urcorner}, 
(-9,-0.8)*{\lrcorner}, 
(11,0.5)*{\urcorner}, 
(11,-0.8)*{\lrcorner}, 
(-12,2);(-2,4) **\crv{(-14,5)}, 
(2,4);(12,2) **\crv{(14,5)},
(-2,0);(-2,4) **\crv{(1,2)}, 
(2,4);(2,0) **\crv{(-1,2)},
(-12,-2);(0,-5) **\crv{(-14,-5)},
(0,-5);(12,-2) **\crv{(14,-5)},
 \endxy~ \gg_{a^9+1}^\ast \\
&\hskip 1cm +yx \ll O^3  \gg_{a^9+1}^\ast  
+y^2 \ll O^2 \gg_{a^9+1}^\ast \\
&= 
  xy \nabla [ ~\xy 
(2,0);(4,-2) **@{-}, 
(-2,0);(-4,-2) **@{-},  
(2.4,0.4);(4,2) **@{-},
(0.1,-1.9);(1.7,-0.2) **@{-},  
(0.1,-1.9);(-1.7,-0.3) **@{-},  
(-2.4,0.4);(-4,2) **@{-},
(-12,2);(-10.4,0.4) **@{-}, 
(-9.6,-0.4);(-8,-2) **@{-}, 
(-12,-2);(-8,2) **@{-},
(-8,2);(-6.4,0.4) **@{-}, 
(-5.6,-0.4);(-4,-2) **@{-}, 
(-8,-2);(-4,2) **@{-},
(12,2);(10.4,0.4) **@{-}, 
(9.6,-0.4);(8,-2) **@{-},
(12,-2);(8,2) **@{-},
(5.6,-0.4);(4,-2) **@{-}, 
(8,2);(6.6,0.4) **@{-},
(8,-2);(4,2) **@{-},
(-9,0.4)*{\urcorner}, 
(-9,-0.8)*{\lrcorner}, 
(11,0.5)*{\urcorner}, 
(11,-0.8)*{\lrcorner}, 
(-12,2);(-2,4) **\crv{(-14,5)}, 
(2,4);(12,2) **\crv{(14,5)},
(-2,0);(-2,4) **\crv{(1,2)}, 
(2,4);(2,0) **\crv{(-1,2)},
(-12,-2);(0,-5) **\crv{(-14,-5)},
(0,-5);(12,-2) **\crv{(14,-5)},
 \endxy~ ](a^3 - a^{-3}) \\
&= 9xy. 
\end{align*}
\end{example}

\begin{remark}
The polynomial $\ll D \gg_{a^9+1}^\ast$ $(\in \mathbb Z[a^{-1},a,x,y]/ I(a^9+1, a^{-6} + 1 + a^6))$ is not 
invariant under Yoshikawa moves $\Gamma_6, \Gamma'_6, \Gamma_7$ and $\Gamma_8$.  
When we evaluate it with $x=1$ and $y=1$, it becomes invariant under $\Gamma_6$ and $\Gamma'_6$.  
In Section~\ref{sect:ribbon} we will observe that $\ll D \gg_{a^9+1}^\ast$ $(\in \mathbb Z[a^{-1},a,x,y]/ I(a^9+1, a^{-6} + 1 + a^6))$ 
is an invariant when we restrict to \lq\lq ribbon marked graphs\rq\rq.  
\end{remark} 

\section{Ribbon marked graphs}
\label{sect:ribbon}

Let $D$ be a  marked graph diagram.  We call a pair of marked vertices of $D$ 
a {\it ribbon pair} if they are the vertices of 
a bigon in $D$ and the markers are not parallel. 

\begin{definition} 
A marked  graph diagram is called  {\it ribbon} if the marked vertices are divided into ribbon pairs.  A marked graph is called {\it ribbon} if there is a ribbon marked graph diagram presenting the marked graph. 
\end{definition} 

For example, the marked graph diagrams $8_1$ and $9_1$ in Examples~\ref{examp-8_1-1} and \ref{examp-9_1-1} are ribbon.  

A surface-link is called {\it ribbon} if it is obtained from a trivial $2$-link by surgery along some $1$-handles attaching it.  It is known that for any admissible ribbon marked graph diagram $D$, the surface-link ${\mathcal L}(D)$ presented by $D$ is a ribbon surface-link, and conversely that  
for any ribbon surface-link $\mathcal L$ there is an admissible ribbon marked graph diagram $D$ such that ${\mathcal L}(D)$ is equivalent to $\mathcal L$.  

When $D$ is ribbon, the positive resolution $L_+(D)$ and the negative resolution $L_-(D)$ are isotopic diagrams.  

Let $D$ be an oriented ribbon marked graph diagram presenting a ribbon $2$-knot.  
Let $n$ be the number of ribbon pairs of marked vertices.  The number of marked vertices is $2n$, and $L_+(D)$ and $L_-(D)$ are diagrams of a trivial link with $n+1$ components. 

\begin{theorem} \label{thm:m=9:C}
Let $D$ be an admissible oriented ribbon marked graph diagram with $2n$ marked vertices.  If ${\mathcal L}(D)$ is a $2$-knot, then $\ll D \gg_{a^9+1}^\ast $ is a multiple of $(xy)^n$, and 
$(xy)^{-n} \ll D \gg_{a^9+1}^\ast $ is determined from the equivalence class of the ribbon $2$-knot. 

Thus, the restriction of the invariant $\ll \cdot \gg_{a^9+1}^\ast$ of marked graphs   to ribbon marked graphs presenting $2$-knots is an invariant of ribbon $2$-knots, 
up to multiplication by powers of $xy$. 
\end{theorem}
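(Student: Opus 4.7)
The plan is to combine the state-sum formula of Theorem~\ref{prop:m=9*} with a local analysis at each ribbon pair and an induction on $n$. Writing
\[
\ll D \gg_{a^9+1}^\ast = \sum_{\sigma \in \mathcal S(D)} x^{\sigma(\infty)} y^{\sigma(0)}\, \nabla[D_\sigma](a^3-a^{-3}),
\]
I fix a ribbon pair $p$ in $D$ and group the sum by the four local resolutions at $p$. Because the two markers of $p$ are non-parallel, direct inspection shows that two of these states yield the same local \emph{A-pattern} (joining the bigon endpoints top-to-top and bottom-to-bottom without introducing a new component), one mixed state produces this A-pattern together with an additional split trivial circle inside the bigon, and the remaining state yields a topologically different \emph{B-pattern} (joining the endpoints left-to-left and right-to-right). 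Using axiom (K1) to factor out the split circle (which contributes the scalar $A := a^{-6}+1+a^6$), the contribution of the pair becomes $(x^2 + y^2 + A\,xy)\ll D^A_p\gg + xy\ll D^B_p\gg$, where $D^A_p, D^B_p$ denote $D$ with pair $p$ resolved as pattern $A$ or $B$ (the other $n-1$ pairs remaining as marked vertices). Modulo the ideal $I(a^9+1, a^{-6}+1+a^6)$, where $A\equiv 0$, this simplifies to
\[
\ll D\gg^\ast_{a^9+1} \;\equiv\; (x^2+y^2)\ll D^A_p\gg^\ast_{a^9+1} + xy\ll D^B_p\gg^\ast_{a^9+1}.
\]

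I would induct on $n$ using the strengthened hypothesis: \emph{for any admissible oriented ribbon marked graph diagram with $n$ ribbon pairs presenting a ribbon $2$-link of $c$ components, $\ll\cdot\gg^\ast_{a^9+1}$ is divisible by $(xy)^n$, and it vanishes modulo the ideal when $c\geq 2$.} The base case $n=0$ is immediate from $\ll O^c\gg = A^{c-1}$, which equals $1$ for $c=1$ and is $\equiv 0$ for $c\ge 2$. For the inductive step, the crucial geometric input is that pattern $A$ at pair $p$ corresponds to removing the band associated with $p$ from the tree of disks-and-bands defining $\mathcal{L}(D)$; since $\mathcal{L}(D)$ is a ribbon $2$-knot, this disconnects the tree, so $D^A_p$ is admissible and presents a $2$-component ribbon $2$-link. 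By the strengthened hypothesis, $\ll D^A_p\gg^\ast_{a^9+1}=0$, making the first term in the recursion vanish. The second term is handled by the induction since $D^B_p$ has only $n-1$ ribbon pairs, giving divisibility of $\ll D\gg^\ast_{a^9+1}$ by $(xy)^n$.

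The main obstacle I foresee is verifying the structure of $D^B_p$: unlike pattern $A$, pattern $B$ lacks an obvious surface-link interpretation, so one must check that $D^B_p$ is an admissible ribbon marked graph diagram presenting a ribbon $2$-link fitting the inductive framework, and keep track of its component count. This requires a careful combinatorial/topological analysis at the bigon, and will likely reveal that $D^B_p$ is a ribbon marked graph diagram of either a $2$-knot or a $2$-link with appropriately many components. For the invariance up to powers of $xy$: Yoshikawa moves $\Gamma_1$--$\Gamma_5$ leave both $\ll D\gg$ and $n$ unchanged (Theorem~\ref{thm-skein-rel}), so the quotient $(xy)^{-n}\ll D\gg^\ast_{a^9+1}$ is preserved. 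Under $\Gamma_6, \Gamma'_6$ (Proposition~\ref{lem-m06}), $\ll D\gg$ is multiplied by $(Ax+y)$ or $(x+Ay)$; modulo $A\equiv 0$, these reduce to $y$ or $x$, shifting $n$ by $\pm 1$ and accounting precisely for the ``up to powers of $xy$'' ambiguity stated in the theorem. Under $\Gamma_7, \Gamma_8$ (Propositions~\ref{lem-m07} and~\ref{lem-m08}), $\ll D\gg$ changes by a multiple of $\Delta(a)\,xy$; the accompanying $xy$ factor preserves the $(xy)^n$ divisibility structure in the ribbon setting, and the invariance of the quotient follows.
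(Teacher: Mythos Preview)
Your divisibility argument via local recursion at a ribbon pair is a reasonable alternative to the paper's approach, and your local analysis of the four resolutions is correct. However, the paper takes a much more direct route that avoids your acknowledged obstacle with $D^B_p$ entirely: it observes that among all states $\sigma$, exactly one state $\sigma_0$ (the one assigning $T_\infty$ to one vertex and $T_0$ to the other in each ribbon pair) yields a knot diagram, while every other state gives that same knot diagram disjoint union with some trivial circles, hence has vanishing Conway polynomial. This immediately gives $\ll D\gg^\ast_{a^9+1} = (xy)^n\,\nabla[D_{\sigma_0}](a^3-a^{-3})$ in one stroke, with no induction needed.

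The genuine gap is in your invariance argument, which does not work as written. First, in the quotient ring you are using, $\Delta(a)=(a^{-6}+1+a^6)^2-1\equiv -1$, not $0$; so under $\Gamma_7$ and $\Gamma_8$ the polynomial changes by $-xy\,\psi(a,x,y)$ with $\psi$ completely uncontrolled, and nothing you have said forces $\psi$ to be divisible by $(xy)^{n-1}$ or forces the quotient $(xy)^{-n}\ll D\gg^\ast_{a^9+1}$ to be unchanged. Second, a single $\Gamma_6$ or $\Gamma'_6$ move changes the number of marked vertices by one, so it destroys the ribbon pairing altogether; your remark that it ``shifts $n$ by $\pm 1$'' is not meaningful since $n$ counts pairs. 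Third, and most seriously, two ribbon diagrams presenting the same ribbon $2$-knot may be connected only by a sequence of Yoshikawa moves passing through non-ribbon (indeed odd-vertex) diagrams, so a move-by-move check within the ribbon class cannot establish the claimed invariance.

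The paper circumvents all of this: having identified $(xy)^{-n}\ll D\gg^\ast_{a^9+1}=\nabla[K](a^3-a^{-3})$ where $K=D_{\sigma_0}$ is the middle cross-sectional knot of a symmetric normal form of $\mathcal L(D)$, it invokes the known fact (from the theory of normal forms of ribbon $2$-knots, cf.\ \cite{KSS}) that $\nabla[K](z)$ is determined by the equivalence class of the ribbon $2$-knot. This external input is the real content of the invariance claim, and your Yoshikawa-move analysis cannot substitute for it.
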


\begin{proof} 
 (1) Assume that $n \geq 1$. 
Let $\sigma_\ast$ (or $\sigma_{\ast\ast}$, resp.) be the state of $D$ 
assigning $T(\infty)$ (or $T(0)$, resp.) 
to every marked vertex.  
There is a unique state, say $\sigma_0$, in ${\mathcal S}(D) -\{ \sigma_\ast, 
\sigma_{\ast\ast} \}$ such that $D_{\sigma_0}$ is a diagram of a knot.  
For any state $\sigma$ in ${\mathcal S}(D) -\{ \sigma_\ast, 
\sigma_{\ast\ast}, \sigma_0\}$, $D_\sigma$ is a disjoint union 
$D_{\sigma_0} \sqcup O^\mu$ for some $\mu = \mu(\sigma) \geq 1$.  
By Proposition~\ref{prop:m=9*},  
\begin{equation*}
\ll D \gg_{a^9+1}^\ast = \sum_{\sigma \in \mathcal S(D)}
x^{\sigma(\infty)}y^{\sigma(0)} 
\nabla[D_\sigma](a^3 - a^{-3}) = 
x^{\sigma_0(\infty)}y^{\sigma_0(0)} 
\nabla[D_{\sigma_0}](a^3 - a^{-3}). 
\end{equation*}
Since $\sigma_0$ assigns $T(\infty)$ to one marked vertex and $T(0)$ to another marked vertex for each ribbon pair of marked vertices, we have 
$\sigma_0(\infty) = \sigma_0(0)=n$.  Thus 
\begin{equation*}
\ll D \gg_{a^9+1}^\ast  = (xy)^n \nabla[D_{\sigma_0}](a^3 - a^{-3}). 
\end{equation*}
Let $K$ be the knot presented by the knot diagram $D_{\sigma_0}$. 
Then 
\begin{equation}\label{eqn:ribbon}
(xy)^{-n} \ll D \gg_{a^9+1}^\ast = \nabla [K](a^3 - a^{-3}). 
\end{equation}

(2) When $n=0$, $D$ is a diagram of a trivial knot and  $\ll D \gg_{a^9+1}^\ast=1$. 
The $2$-knot ${\mathcal L}(D)$ is a trivial $2$-knot.  Let $K$ be the trivial knot.  Then the same equality with (\ref{eqn:ribbon}) holds. 

In either case (1) or (2), the ribbon $2$-knot ${\mathcal L}(D)$ is equivalent to a $2$-knot in a normal form in the sense of \cite{KSS} such that it is symmetric with respect to ${\mathbb R}^3_0$ and the cross-sectional knot appearing at ${\mathbb R}^3_0$ is $K$.  
For such a knot $K$ it is known that the Conway polynomial 
$\nabla[K](z)$ is determined from the equivalence class of the ribbon $2$-knot. 
In particular, when the $2$-knot is trivial,  $\nabla[K](z)=1$.  
Thus  we see that $(xy)^{-n} \ll D \gg_{a^9+1}^\ast$ is determined from the equivalence class of the ribbon $2$-knot ${\mathcal L}(D)$. 
 \end{proof}
 
Let $\varpi=\exp(\frac{2\pi \sqrt{-1}}{18}) \in \mathbb C$, a primitive $18$th root of unity.  Since $\varpi^9 +1 = \varpi^{-6} +1 + \varpi^6 =0$, evaluation of 
$\ll D \gg_{a^9+1}^\ast$ with $a = \varpi$ is well-defined.  Define $P_9^\ast(D)$ by 
\begin{equation*}
P_9^\ast(D) = \ll D \gg_{a^9+1}^\ast |_{a = \varpi} \quad \in {\mathbb C}[x,y]. 
\end{equation*}
Since $\varpi^{-3} = \varpi^3 = \sqrt{-3}$, it follows from Theorem~\ref{prop:m=9*} that 
\begin{equation*}
P_9^\ast(D) = \sum_{\sigma \in \mathcal S(D)}
x^{\sigma(\infty)}y^{\sigma(0)}
\nabla[{D_\sigma}] (\sqrt{-3}).   
\end{equation*} 

From Theorem~\ref{thm:m=9:C} and its proof, 
we obtain the following. 

\begin{theorem} \label{thm:m=9:D}
Let $D$ be an admissible oriented ribbon marked graph diagram with $2n$ marked vertices.  If ${\mathcal L}(D)$ is a $2$-knot, then $P_9^\ast(D)$ is a monomial $c(xy)^n$ and the coefficient $c \in {\mathbb C}$ is determined from the equivalence class of the ribbon $2$-knot. 
When the $2$-knot is a trivial $2$-knot, then $c=1$. 
\end{theorem}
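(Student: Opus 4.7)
The plan is to deduce Theorem~\ref{thm:m=9:D} from Theorem~\ref{thm:m=9:C} by specializing at $a=\varpi$. First, I would verify that the evaluation $a\mapsto\varpi$ descends to a well-defined ring homomorphism
\[
\mathbb Z[a^{-1},a,x,y]/I(a^{9}+1,a^{-6}+1+a^{6}) \longrightarrow \mathbb C[x,y],
\]
since by construction $\varpi^{9}+1=0$ and $\varpi^{-6}+1+\varpi^{6}=0$. Under this map the element that drives everything downstream is $a^{3}-a^{-3}$, and a direct computation gives
\[
\varpi^{3}-\varpi^{-3}=2i\sin(\pi/3)=\sqrt{-3}.
\]

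Next I would invoke the key identity obtained in the proof of Theorem~\ref{thm:m=9:C}, namely
\[
\ll D\gg_{a^{9}+1}^{\ast}=(xy)^{n}\,\nabla[K](a^{3}-a^{-3}),
\]
where $K$ is the cross-sectional knot appearing at $\mathbb R^{3}_{0}$ in a normal form of the ribbon $2$-knot $\mathcal L(D)$ in the sense of \cite{KSS} (when $n=0$ this reduces to the trivial $2$-knot case with $K$ the trivial knot). Applying the above evaluation map to both sides yields
\[
P_{9}^{\ast}(D)=\nabla[K]\!\bigl(\sqrt{-3}\bigr)\,(xy)^{n},
\]
so $P_{9}^{\ast}(D)$ is the monomial $c(xy)^{n}$ with $c=\nabla[K](\sqrt{-3})\in\mathbb C$.

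Finally, I would note that the dependence of $c$ only on the equivalence class of the ribbon $2$-knot is inherited directly from the corresponding statement in Theorem~\ref{thm:m=9:C}: the Conway polynomial $\nabla[K](z)$ of a cross-sectional knot appearing in a symmetric normal form is a ribbon $2$-knot invariant (\cite{KSS}), and evaluation at $z=\sqrt{-3}$ preserves this property. For the trivial $2$-knot one takes $K$ to be a trivial knot, whence $\nabla[K](z)\equiv 1$ and therefore $c=1$, completing the proof.

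There is essentially no real obstacle here, since the argument is a formal specialization of Theorem~\ref{thm:m=9:C}; the only point requiring a little care is the verification that $a^{3}-a^{-3}$ indeed specializes to $\sqrt{-3}$ (as opposed to a more complicated expression), which is what makes the invariant land in $\mathbb C[x,y]$ rather than in a larger coefficient ring.
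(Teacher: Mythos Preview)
Your proposal is correct and follows essentially the same approach as the paper, which simply states that the theorem follows from Theorem~\ref{thm:m=9:C} and its proof. You have filled in the details of the specialization $a\mapsto\varpi$ exactly as intended, including the computation $\varpi^{3}-\varpi^{-3}=\sqrt{-3}$ and the identification $c=\nabla[K](\sqrt{-3})$.
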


For example, for the marked graph diagram $8_1$ in Example~\ref{examp-8_1-1}, the complex number $c$ is $9$.   Thus we see that the $2$-knot is not equivalent to a trivial $2$-knot. 


\section{Proof of Lemma~\ref{3-tangle basis}}
\label{sect-pf-prop-e3t}

Let $G$ be an oriented tangled trivalent graph diagram in a $2$-disk $D^2$ with the end points $s_1, \ldots, s_6$ as shown in Figure \ref{fig-3tmd2} such that there are no connected components as diagrams in ${\rm Int}D^2$. If $G$ has no crossings, then $G$ forms a tiling, say $G^\ast$, of $D^2$, where we regard $\{s_1, \dots, s_6 \}$ and $\{s_i s_{i+1} | 1 \leq i \leq 6\}$ as trivalent vertices and edges of the tiling $G^\ast$. Here $s_i s_{i+1}$ means an edge on $\partial D^2$ whose end points are $s_i$ and $s_{i+1}$, and we assume $s_7=s_1$. 

\begin{figure}[h]
\begin{center}
\resizebox{0.20\textwidth}{!}{%
  \includegraphics{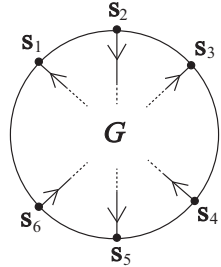}}
\caption{An oriented tangled trivalent graph diagram $G$ in $D^2$}\label{fig-3tmd2}
\end{center}
\end{figure}

Let $V, E$ and $F$ be the numbers of the vertices, edges and faces of the tiling $G^\ast$ of $D^2$, respectively, i.e., $V$ and $E$ are the number of vertices and edges of $G^\ast$, and $F$ is the number of regions of $D^2 \backslash G^\ast$. 
For each integer $a\geq1$, let $F_a$ denote the number of faces of the tiling $G^\ast$ that are $a$-gons. By the definition of oriented tangled trivalent graph diagram, there are only $2i$-gons  ($i\geq 1$).
Then 
\begin{align*}
&F=F_2+F_4+F_6+F_8+F_{10}+F_{12}+\cdots,\\
&E=\frac{1}{2} (6+2F_2+4F_4+6F_6+8F_8+10F_{10}+12F_{12}+\cdots) =3+T,\\
&V=\frac{2}{3}E =\frac{2}{3}(3+T)=2+\frac{2}{3}T,~\text{where}~T:=\sum^{\infty}_{i=1}i F_{2i}.
\end{align*}
Considering the Euler characteristic of the tiling $G^\ast$ of $D^2$, we have
\begin{align*}
1&=V-E+F=2+\frac{2}{3}T -(3+T)+F = -1-\frac{1}{3}T +F\\
&=-1+\frac{1}{3} (2F_2+F_4-F_8-2F_{10}-3F_{12}-\cdots).
\end{align*}
This gives
\begin{align}\label{equation-3tangle}
&2F_2+F_4 - 6= F_8+2F_{10}+3F_{12}+\cdots.
\end{align}

For each $i$ with $1 \leq i \leq 6$, we denote by $\overline{s_is_{i+1}}$ a proper simple arc in $D^2$ whose end points are $s_i$ and $s_{i+1}$.  Note that when $G$ has no $2$-gons in ${\rm Int}D^2$,  any $2$-gon in $G^\ast$, if there exists, is   $s_i s_{i+1} \cup \overline{s_is_{i+1}}$  for some $i$.  

\begin{lemma}\label{7-move-tangle-lem-1}
Let $G$ be an oriented tangled trivalent graph diagram in $D^2$ with the end points $s_1, \ldots, s_6$ as shown in Figure \ref{fig-3tmd2} such that there are no connected components as diagrams in ${\rm Int}D^2$. Suppose that $G$ has no crossings, $2$-gons and $4$-gons in ${\rm Int}D^2$. Then the tiling $G^\ast$ of $D^2$ is one of the tilings in Figure~\ref{fig-m07at}.  
\end{lemma}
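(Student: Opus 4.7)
The starting point is the Euler characteristic identity
\begin{equation*}
2F_2 + F_4 - 6 \;=\; F_8 + 2F_{10} + 3F_{12} + \cdots \;\geq\; 0
\end{equation*}
established in (\ref{equation-3tangle}). My plan is to combine this inequality with a careful accounting of how the faces of $G^\ast$ meet the boundary $\partial D^2$, and then to enumerate the resulting cases.

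The first step is to exploit the boundary constraint. The hexagonal boundary $\partial D^2$ supplies exactly six edges $s_1s_2,\ldots,s_6s_1$ of $G^\ast$, and each such edge lies on the frontier of exactly one face. Under the hypothesis no $2$-gon or $4$-gon lies entirely in ${\rm Int}\,D^2$, so each $2$-gon uses exactly one boundary edge and each $4$-gon uses at least one. Summing contributions yields the bound $F_2 + F_4 \leq 6$. Combined with $2F_2 + F_4 \geq 6$ this restricts $(F_2,F_4)$ and the sequence $(F_8,F_{10},\ldots)$ to a small finite list of possibilities; in particular, the total number of faces different from hexagons is bounded, while the number of hexagonal faces remains a free parameter that will be pinned down by the boundary data.

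The second step is the case-by-case reconstruction of the tiling. For each admissible choice of $(F_2,F_4,F_8,F_{10},\ldots)$, I will start from the $2$-gons: each one is determined by a pair of consecutive endpoints $s_i,s_{i+1}$ joined by an interior arc, and can be ``peeled off'' to reduce the number of boundary endpoints. What remains is a tangle diagram on fewer endpoints, and I can iterate. Whenever a $4$-gon is forced to appear, its position along $\partial D^2$ (determined by how many boundary edges it absorbs) is again constrained. Every step must respect the orientation rule at trivalent vertices (all inward or all outward) together with the fixed alternating orientation of the six endpoints $s_1,\ldots,s_6$, which eliminates a large portion of the combinatorial possibilities. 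In each surviving case I match the resulting picture with one of the tilings displayed in Figure~\ref{fig-m07at}, and in the remaining cases I derive a contradiction from either the Euler inequality, the boundary-edge count, or the orientation rule.

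The main obstacle is the bookkeeping of the case analysis: one has to be sure the enumeration is exhaustive and that no admissible $(F_2,F_4,F_{2i})$-tuple leads to a tiling missing from Figure~\ref{fig-m07at}. To keep the case work manageable, I expect the ``peel off a $2$-gon'' reduction to be the decisive simplification, reducing the general problem to a short list of base configurations with few hexagonal faces that can be drawn explicitly. The analogous argument will later serve as a template for the proof of Lemma~\ref{4-tangle basis} in Section~\ref{sect-pf-prop-e4t}, where the boundary has eight endpoints and a parallel Euler-type identity controls the admissible face numbers.
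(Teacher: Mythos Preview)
Your approach is correct and essentially the same as the paper's: both start from the Euler inequality $2F_2+F_4\geq 6$, exploit that every $2$-gon and $4$-gon must meet $\partial D^2$, and finish with a case analysis on the number and placement of $2$-gons. Your explicit bound $F_2+F_4\leq 6$ and the ``peel off a $2$-gon'' reduction are nice ways to organize the argument, but the paper reaches the same conclusions by casing directly on $F_2\in\{0,1,2,3\}$ (the sharper bound $F_2\leq 3$ coming from the observation that two $2$-gons cannot share a boundary vertex $s_i$); one small remark is that the orientation rule you invoke is not actually needed in this lemma beyond forcing all faces to be even-gons, which is already built into~(\ref{equation-3tangle}).
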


\begin{figure}
\begin{center}
\resizebox{0.7\textwidth}{!}{%
  \includegraphics{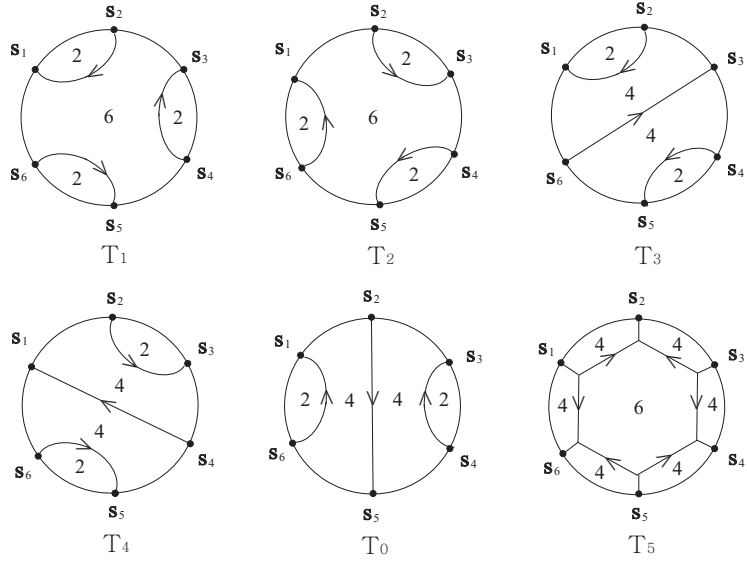}}
\caption{Tilings $G^\ast$  of $D^2$ by $G$ 
}\label{fig-m07at}
\end{center}
\end{figure}

\begin{proof}
It follows from (\ref{equation-3tangle}) that
\begin{equation}\label{contradiction-3tangle}
2F_2+F_4 - 6 \geq 0.
\end{equation}
Since there are no $2$-gons in ${\rm Int}D^2$, we see that one of two edges of any $2$-gon in $G$ lies in the boundary $\partial D^2$. Since $G$ has only trivalent vertices, any two distinct $2$-gones cannot share a vertex $s_i (1\leq i \leq 6)$ in common. This gives that $0\leq F_2 \leq 3$.

{\bf Case I.} Suppose $F_2=3$.  The tiling $G^\ast$ is  $T_1$ or $T_2$ in Figure~\ref{fig-m07at}. 

\smallskip

{\bf Case II.} Suppose $F_2=2$. It follows from (\ref{contradiction-3tangle}) that $F_4 \geq 2$. 

(i) Suppose that the $2$-gons 
are $\{s_1s_2\cup\overline{s_1s_2}, s_3s_4\cup\overline{s_3s_4}\}$ or they are in a position obtained by rotating $\{s_1s_2\cup\overline{s_1s_2}, s_3s_4\cup\overline{s_3s_4}\}$.   
Consider  the case of $\{s_1s_2\cup\overline{s_1s_2}, s_3s_4\cup\overline{s_3s_4}\}$.  Then edges 
$s_6s_1, \overline{s_1s_2}, s_2s_3, \overline{s_3s_4}, s_4s_5$ are edges of the same $n$-gon for some $n >6$. 
Since any $4$-gon of $G^\ast$ must have $s_5s_6$ as an edge, we have $F_4 \leq 1$. This yields a contradiction. Similarly, when the $2$-gons are in a position that is obtained by rotating $\{s_1s_2\cup\overline{s_1s_2}, s_3s_4\cup\overline{s_3s_4}\}$, we have a contradiction.  Thus case (i) does not occur.  

(ii) Suppose that the $2$-gons are 
$\{s_1s_2\cup\overline{s_1s_2}, s_4s_5\cup\overline{s_4s_5}\}$ or they are in a position obtained by rotating 
$\{s_1s_2\cup\overline{s_1s_2}, s_4s_5\cup\overline{s_4s_5}\}$.  
Consider the case of $\{s_1s_2\cup\overline{s_1s_2}, s_4s_5\cup\overline{s_4s_5}\}$.  
Let $A$ be a $4$-gon in this tiling.  
Since there are no $4$-gons in {\rm Int}$D^2$, one of the four edges of $A$ have to be $s_2s_3$, $s_3s_4$, $s_5s_6$ or $s_1s_6$. If $s_2s_3$ or $s_6s_1$ is an edge of $A$, then $A = s_6s_1 \cup \overline{s_1s_2} \cup 
s_2s_3 \cup \overline{s_3s_6}$.  
Then the tiling is  $T_3$ in Figure~\ref{fig-m07at}.  If $s_3s_4$ or $s_5s_6$  is an edge of  $A$, then by the same reason, the tiling is  $T_3$.  Thus we have $T_3$ in case this case.  For the other cases, the tilings are obtained by rotating $T_3$, which are $T_4$ and $T_0$.   

\smallskip

{\bf Case III.} 
Suppose $F_2=1$. It follows from (\ref{contradiction-3tangle}) that $F_4 \geq 4$. 

Consider a case where the $2$-gon is $s_1s_{2} \cup \overline{s_1s_2}$.  

(i) If $s_2s_3$ or $s_6s_1$ is an edge of a $4$-gon $A$ in $G^\ast$, then  $A= s_6s_1\cup \overline{s_1s_2} \cup s_2s_3 \cup \overline{s_3s_6}$.  Since $F_4 \geq 4$, there are three  $4$-gons besides $A$. 
Each of them has edge $s_3, s_4, s_4s_5, s_5s_6$. However, a $4$-gon having $s_3s_4$ or $s_5s_6$ is a $4$-gon $s_5s_6 \cup \overline{s_6s_3} \cup s_3s_4 \cup \overline{s_4s_5}$.  Then the tiling is $T_3$, and this contradicts to the assumption $F_2=1$.  Thus this is not the case.  

(ii) If neither $s_2s_3$ nor $s_6s_1$ is an edge of a $4$-gon in $G^\ast$, then $F_4 \leq 3$. This contradicts to 
$F_4 \geq 4$. Thus this is not the case.   

\smallskip
{\bf Case IV.} Suppose $F_2=0$. It follows from (\ref{contradiction-3tangle}) that $F_4 \geq 6$. Since each $4$-gon has an edge  in $\partial D^2$, we have $T_5$.  
\end{proof}

\begin{figure}
\begin{center}
\resizebox{0.50\textwidth}{!}{%
  \includegraphics{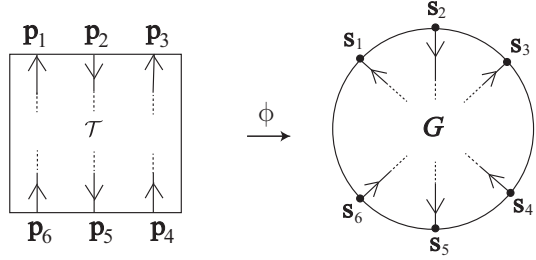}}
\caption{A homeomorphism $\phi:I^2 \rightarrow D^2$}\label{fig-3tm07}
\end{center}
\end{figure}

\begin{proof}[\bf Proof of Lemma \ref{3-tangle basis}]
Let $\mathcal T$ be a $3$-tangle diagram with the boundary (a) in Figure~\ref{fig-bdary-t} such that there are no crossings, $2$-gons and $4$-gons and there are no connected components as diagrams in ${\rm Int}D^2$. Let $p_1, \ldots, p_6$ be the end points of $\mathcal T$ and let $\phi:I^2 \rightarrow D^2$ be a homeomorphism from $I^2$ onto a $2$-disk $D^2$ with $\phi(p_i)=s_i$ $(1\leq i\leq 6)$. See Figure~\ref{fig-3tm07}. 
Putting $G= \phi(\mathcal T)$, we obtain the result from Lemma~\ref{7-move-tangle-lem-1}. 
\end{proof}


\section{Proof of Lemma~\ref{4-tangle basis}}
\label{sect-pf-prop-e4t}

Let $G$ be an oriented tangled trivalent graph diagram in a $2$-disk $D^2$ with the end points $t_1, \ldots, t_8$ as shown in Figure \ref{fig-4tm08-1} such that there are no connected components as diagrams in ${\rm Int}D^2$. If $G$ has no crossings, then $G$ forms a tiling, say $G^\ast$,  of $D^2$, where we regard $\{t_1, \dots, t_8 \}$ and $\{t_i t_{i+1} \mid 1 \leq i \leq 8\}$ as trivalent vertices and edges of the tiling $G$, respectively. Here $t_it_{i+1}$ means an edge of $\partial D^2$ whose end points are $t_i$ and $t_{i+1}$, and we assume $t_9=t_1$. 

\begin{figure}[ht]
\begin{center}
\resizebox{0.20\textwidth}{!}{%
  \includegraphics{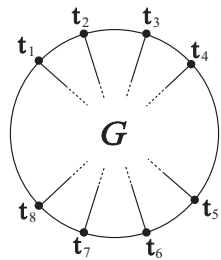}}
\caption{A 4-tangle of trivalent graph diagram} \label{fig-4tm08-1}
\end{center}
\end{figure}

Let $V, E$ and $F$ be the numbers of the vertices, edges and faces of the tiling $G^\ast$, respectively. Then 
\begin{align*}
&F=F_2+F_4+F_6+F_8+F_{10}+F_{12}+F_{14}+\cdots,\\
&E=\frac{1}{2}(8+2F_2+4F_4+6F_6+8F_8+10F_{10}+12F_{12}+14F_{14}+\cdots)=4+T,\\
&V=\frac{2}{3}E =\frac{2}{3}(4+T) = \frac{8}{3}+\frac{2}{3}T,~\text{where}~T=\sum^{\infty}_{i=1}i F_{2i}.
\end{align*}
Considering the Euler characteristic of the tiling $G^\ast$ of $D^2$, we have
\begin{equation*}
1=V-E+F=\frac{8}{3}+\frac{2}{3}T -(4+T)+F = -\frac{4}{3}-\frac{1}{3}T+F.
\end{equation*}
This gives
\begin{equation}\label{equation-4tangle}
2F_2+F_4 - 7 = F_8 + 2F_{10}  + 3F_{12} + 4F_{14}+\cdots.  
\end{equation}

For each $i$ with $1 \leq i \leq 8$, we denote by $\overline{t_it_{i+1}}$ a proper simple arc in $D^2$ whose end points are $t_i$ and $t_{i+1}$.  Note that when $G$ has no $2$-gons in ${\rm Int}D^2$,  any $2$-gon in $G^\ast$, if there exists, is   $t_i t_{i+1} \cup \overline{t_i t_{i+1}}$  for some $i$.

\begin{lemma}\label{8-move-tangle-lem-1}
Let $G$ be an oriented tangled trivalent graph diagram in $D^2$ with the end points $t_1, \ldots, t_8$ as shown in Figure \ref{fig-4tm08-1} such that there are no connected components as diagrams in ${\rm Int}D^2$. Suppose that $G$ has no crossings, $2$-gons and $4$-gons in ${\rm Int}D^2$. Then the tiling $G^\ast$ is one of the tilings in Figures~\ref{fig-m08at-1}, \ref{fig-m08at-2}, \ref{fig-m08at-3}, \ref{fig-m08at-4} and \ref{fig-m08at-5}. 
\end{lemma}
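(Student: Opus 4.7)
The plan is to follow the template of the proof of Lemma~\ref{7-move-tangle-lem-1}, but with the richer combinatorics forced by having eight boundary points instead of six. The key inequality will be obtained from (\ref{equation-4tangle}): since every higher-order face contributes non-negatively to the right hand side,
\begin{equation*}
2F_2 + F_4 \geq 7.
\end{equation*}
As in the 3-tangle case, the hypothesis that there are no $2$-gons in $\operatorname{Int}D^2$ forces each $2$-gon of $G^\ast$ to be of the form $t_it_{i+1}\cup\overline{t_it_{i+1}}$; trivalence of the vertices $t_i$ then prevents two distinct $2$-gons from sharing a boundary vertex, yielding $0\le F_2\le 4$.

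Next, I would split the argument into five cases according to the value of $F_2\in\{0,1,2,3,4\}$, and in each case use the additional hypothesis (no $4$-gons in $\operatorname{Int}D^2$) to conclude that every $4$-gon of $G^\ast$ must contain at least one edge of $\partial D^2$. For each case I would enumerate, up to the dihedral symmetry of the octagon $t_1\cdots t_8$, the inequivalent placements of the $2$-gons on the boundary, and then for each such placement enumerate the admissible placements of the $F_4\geq 7-2F_2$ required $4$-gons on the remaining boundary arcs. After all $2$- and $4$-gons have been inserted, the leftover region of $D^2$ is bounded by an even cycle and can be closed off only by a single face of appropriate even length; in each subcase this determines the tiling uniquely. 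Matching each resulting tiling against Figure~\ref{fig-4tbas}, via the homeomorphism $\phi:I^2\to D^2$ sending $p_i$ to $t_i$ exactly as in the proof of Lemma~\ref{3-tangle basis}, yields one of the diagrams $g_0,\dots,g_{22}$.

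The main obstacle is purely combinatorial bookkeeping: with $F_2=4$ the $2$-gons fall into two orbits under the dihedral group; with $F_2=3$ one must list the orbits of triples of pairwise non-adjacent boundary edges and then, for each, the compatible $4$-gon configurations; the cases $F_2=2,1,0$ produce progressively more subcases, but they are simultaneously constrained by the demand that a growing number ($F_4\ge 3,5,7$) of $4$-gons must all touch $\partial D^2$, which quickly saturates the available boundary edges. The challenge is to traverse all these configurations systematically, using the octagonal symmetry to reduce the count, and to verify that exactly the $23$ tilings listed in Figure~\ref{fig-4tbas} appear, with no duplication and no omission. Once that enumeration is carried out, the proof of Lemma~\ref{4-tangle basis} itself follows from the 4-tangle version of Lemma~\ref{8-move-tangle-lem-1} by the same reduction $\phi\colon I^2\to D^2$ used at the end of Section~\ref{sect-pf-prop-e3t}.
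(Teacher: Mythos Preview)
Your plan is essentially the paper's proof: the same Euler-characteristic inequality $2F_2+F_4\ge 7$, the same bound $0\le F_2\le 4$ from the boundary/trivalence argument, and the same case split on $F_2$ with the observation that every $2$-gon and $4$-gon must touch $\partial D^2$. One point to tighten: your heuristic that ``the leftover region \dots\ can be closed off only by a single face'' is not how the paper argues and is not obviously true a priori---the paper instead fixes a placement of the $2$-gons, looks at the face containing a specific run of boundary edges, and branches on whether that face is a $4$-gon, a $6$-gon, or larger, using the resulting sharpened bound on $F_4$ (via (\ref{equation-4tangle})) to force contradictions in the infeasible subcases; this is what actually pins down $F_4$ exactly rather than just bounding it below.
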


\begin{proof}
Since there are no $2$-gons in ${\rm Int}D^2$, one of two edges of any $2$-gon in $G$ lies in  $\partial D^2$. Since $G^\ast$ has only trivalent vertices, any two distinct $2$-gons cannot share a vertex $t_i$ $(1\leq i \leq 8)$ in common. This gives that $0\leq F_2 \leq 4$. 

{\bf Case I.} Suppose $F_2=4$. 
Then $G^\ast$ is one of the tilings shown in Figure~\ref{fig-m08at-1}. 

\begin{figure}[ht]
\begin{center}
\resizebox{0.40
\textwidth}{!}{%
  \includegraphics{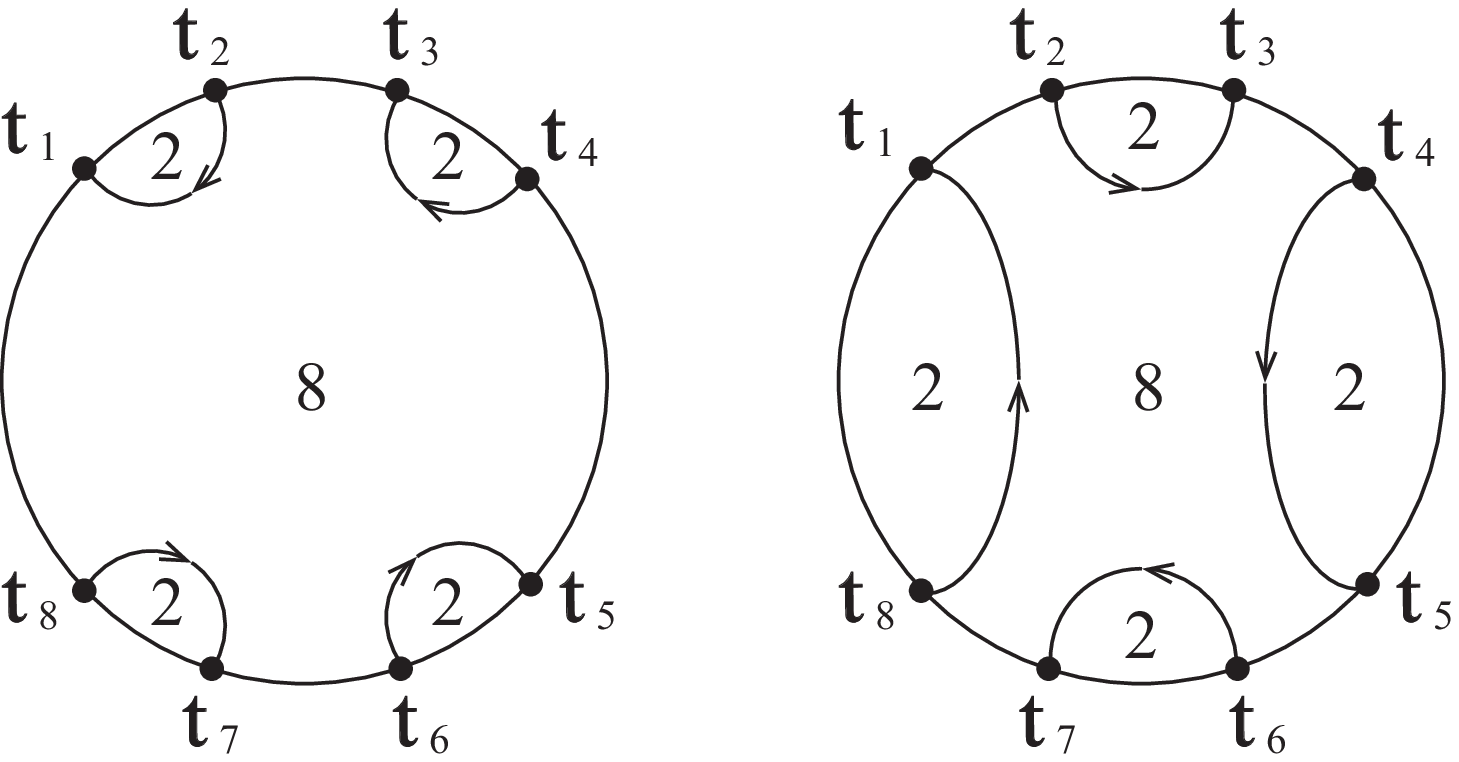}}
\caption{$F_2=4$} \label{fig-m08at-1}
\end{center}
\end{figure}

{\bf Case II.}  Suppose $F_2=3$. It follows from (\ref{equation-4tangle}) that $0\leq F_8 \leq F_4 -1$. So $F_4 \geq 1$. 

(i) 
Suppose that the three $2$-gons are 
$\{ t_1 t_2 \cup \overline{t_1 t_2},  t_3 t_4 \cup \overline{t_3 t_4},   t_5 t_6 \cup \overline{t_5 t_6} \}$ 
or they are in a position obtained by rotating 
$\{ t_1 t_2 \cup \overline{t_1 t_2},  t_3 t_4 \cup \overline{t_3 t_4},   t_5 t_6 \cup \overline{t_5 t_6} \}$.  

Consider the case of $\{ t_1 t_2 \cup \overline{t_1 t_2},  t_3 t_4 \cup \overline{t_3 t_4},   t_5 t_6 \cup \overline{t_5 t_6} \}$.  Edges $t_8 t_1, \overline{t_1 t_2}, t_2 t_3, \overline{t_3 t_4},$ $t_4 t_5, \overline{t_5 t_6}, t_6 t_7$ are edges of the same $n$-gon for some $n \geq 8$. Thus by (\ref{equation-4tangle}), we have $F_4 \geq 2$. 
On the other hand, 
any  $4$-gon of $G^\ast$ has $t_7 t_8$ as an edge.  Thus $F_4 \leq 1$.  This is a contradiction. 
In the other cases of (i), we have a contradiction. Thus the case (i) does not occur.  

(ii) 
Suppose that the three $2$-gons are 
$\{ t_1 t_2 \cup \overline{t_1 t_2},  t_3 t_4 \cup \overline{t_3 t_4},   t_6 t_7 \cup \overline{t_6 t_7} \}$ 
or they are in a position obtained by rotating 
$\{ t_1 t_2 \cup \overline{t_1 t_2},  t_3 t_4 \cup \overline{t_3 t_4},   t_6 t_7 \cup \overline{t_6 t_7} \}$.  

Consider the case of $\{ t_1 t_2 \cup \overline{t_1 t_2},  t_3 t_4 \cup \overline{t_3 t_4},   t_6 t_7 \cup \overline{t_6 t_7} \}$.  Edges $t_8 t_1, \overline{t_1 t_2}, t_2 t_3, \overline{t_3 t_4},$ $t_4 t_5$ are edges of the same $n$-gon for some $n \geq 6$. Since $F_4 \geq 1$, there is a $4$-gon $A$ in $G^\ast$.  Since $A$ has $t_5 t_6$ or $t_7 t_8$ as an edge, we have $A= t_5 t_6 \cup  \overline{t_6 t_7} \cup t_7 t_8 \cup  \overline{t_8 t_5}$.  Then $G^\ast$ is one of the tilings in 
Figure~\ref{fig-m08at-2}.  In the other cases of (ii), we have the other tilings in Figure~\ref{fig-m08at-2}. 

\begin{figure}[ht]
\begin{center}
\resizebox{0.76
\textwidth}{!}{%
  \includegraphics{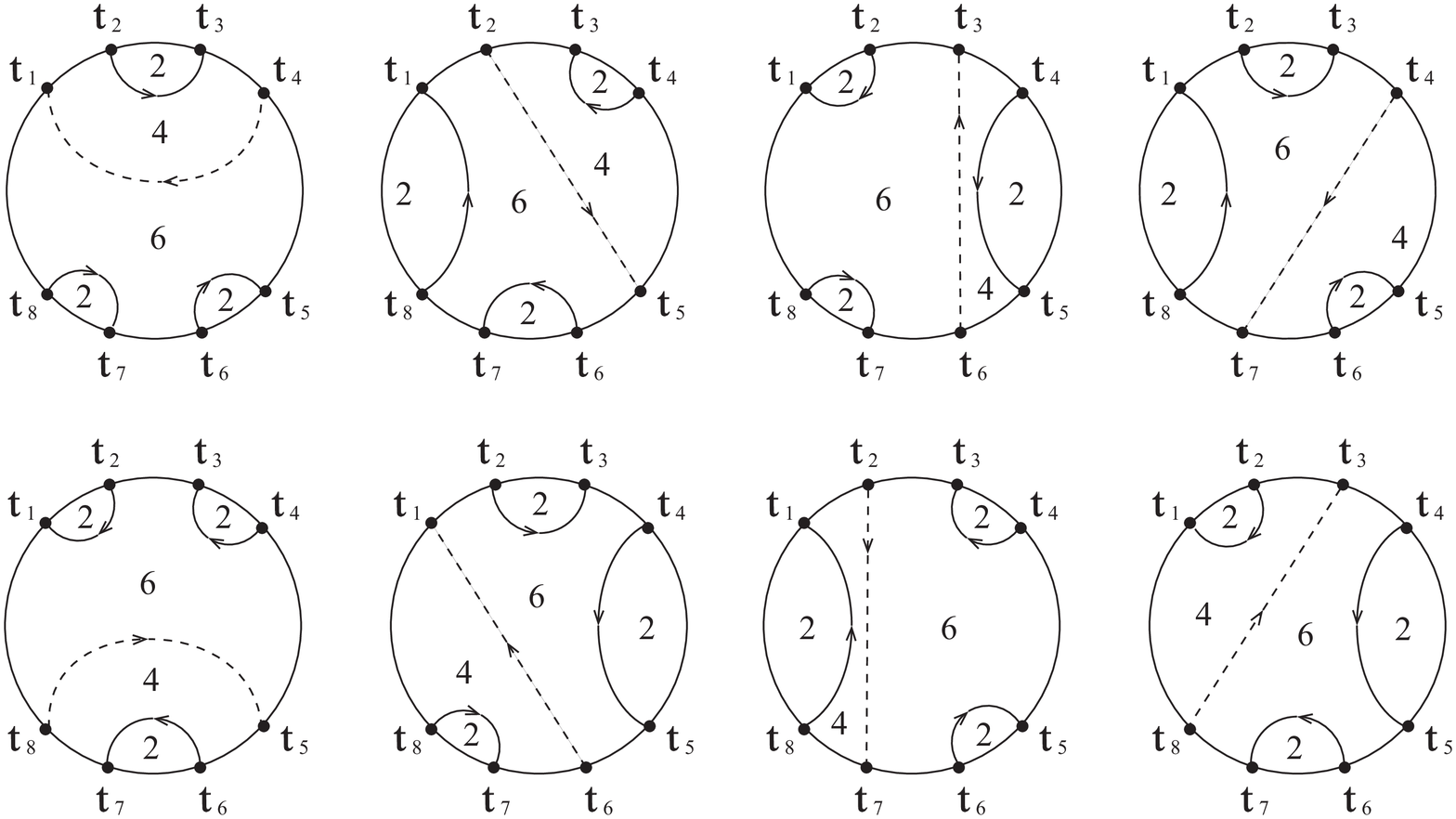}}
\caption{$F_2=3$} \label{fig-m08at-2}
\end{center}
\end{figure}

{\bf Case III.} $F_2=2$. It follows from (\ref{equation-4tangle}) that $0\leq F_8 \leq  F_4-3$ and so $F_4 \geq  3$.  

(i) Suppose that the $2$-gons are 
$\{ t_1 t_2 \cup \overline{t_1 t_2}, t_3 t_4 \cup \overline{t_3 t_4} \}$ or in a position obtained by rotating them.  
Consider the case of $\{ t_1 t_2 \cup \overline{t_1 t_2}, t_3 t_4 \cup \overline{t_3 t_4}\}$.  
Edges $t_8 t_1, \overline{t_1 t_2}, t_2 t_3, \overline{t_3 t_4}, t_4 t_5$ are edges of the same $n$-gon for some $n \geq 6$. It implies $F_4 \leq 1$ as before. This contradicts to $F_4 \geq  3$.  We have a contradiction in other cases of (i). Thus the case (i) does not occur.  

(ii) Suppose that the $2$-gons are 
$\{ t_1 t_2 \cup \overline{t_1 t_2}, t_4 t_5 \cup \overline{t_4 t_5} \}$ or in a position obtained by rotating them.  
Consider the case of $\{ t_1 t_2 \cup \overline{t_1 t_2}, t_4 t_5 \cup \overline{t_4 t_5}\}$.  

(a) If $t_2 t_3$ or $t_8 t_1$ is an edge of a $4$-gon, then the $4$-gon is $t_8 t_1 \cup \overline{t_1 t_2} \cup t_2 t_3 \cup  \overline{t_3 t_8}$.  Edges $t_7 t_8, \overline{t_8 t_3}, t_3 t_4, \overline{t_4 t_5}, t_5 t_6$ are edges of the same $n$-gon for some $n \geq 6$.  Then another $4$-gon, if there exists, must have $t_6 t_7$ as an edge. Thus $F_4 \leq 2$, which contradicts to $F_4 \geq  3$.  

(b) If $t_3 t_4$ or $t_5 t_6$ is an edge of a $4$-gon, then by the same argument with (a), we have a contradiction.  

By (a) and (b), if there is a $4$-gon then it has $t_6 t_7$ or $t_7 t_8$. Thus $F_4 \leq 2$, which contradicts to $F_4 \geq  3$.  
Therefore we see that (ii) does not occur.  

(iii) Suppose that the $2$-gons are 
$\{ t_1 t_2 \cup \overline{t_1 t_2}, t_5 t_6 \cup \overline{t_5 t_6} \}$ or in a position obtained by rotating them.  
Consider the case of $\{ t_1 t_2 \cup \overline{t_1 t_2}, t_5 t_6 \cup \overline{t_5 t_6}\}$.  

If none of $t_2 t_3, t_4 t_5, t_6 t_7, t_8 t_1$ is an edge of a $4$-gon, then $F_4 \leq 2$,  which contradicts to $F_4 \geq  3$.  Thus, at least one of them is an edge of a $4$-gon.  Assume that $t_2 t_3$ is so.  Then $t_8 t_1 \cup \overline{t_1 t_2} \cup t_2 t_3 \cup \overline{t_3 t_8}$ is a $4$-gon.  Beside of this $4$-gon,  there are at least two $4$-gons.  Since one of them has $t_4 t_5$ or $t_6 t_7$ as an edge, it is the $4$-gon 
 $t_4 t_5 \cup \overline{t_5 t_6} \cup t_6 t_7 \cup \overline{t_7 t_4}$.  Then we have a tiling in 
Figure~\ref{fig-m08at-3}.  When one of $t_4 t_5, t_6 t_7, t_8 t_1$ is an edge of a $4$-gon, we have the same tiling.  For the other cases of (iii), we have a tiling by rotation. Thus the possible tilings are shown in the figure.   

\begin{figure}[ht]
\begin{center}
\resizebox{0.76
\textwidth}{!}{%
  \includegraphics{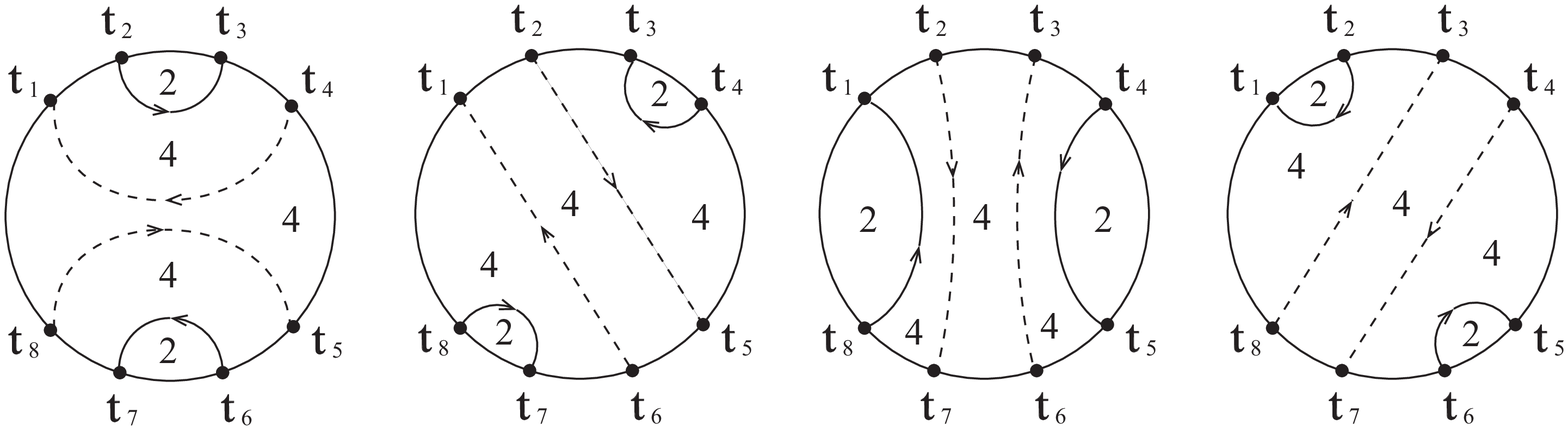}}
\caption{$F_2=2$} \label{fig-m08at-3}
\end{center}
\end{figure}

{\bf Case IV.} Suppose $F_2=1$. It follows from (\ref{equation-4tangle}) that $F_4 \geq 5$. Consider the case that the $2$-gon is $t_1 t_2 \cup \overline{t_1 t_2}$.  Edges $t_8 t_1, \overline{t_1 t_2}, t_2 t_3$ are edges of the same $n$-gon, say $A$, for some $n \geq 4$. 

(i) Suppose that $A$ is a $4$-gon.  Namely, 
$A=t_8 t_1 \cup \overline{t_1 t_2} \cup t_2 t_3 \cup \overline{t_3 t_8}$ is a $4$-gon in $G^\ast$.  
Besides $A$, there must be at  least four $4$-gons in $G^\ast$. Since each of which has 
one of $t_3 t_4, t_4 t_5, t_5 t_6, t_6 t_7, t_7 t_8$ as an edge.  Thus at least one of $t_3 t_4, t_7 t_8$ is an edge of a $4$-gon. Then $A':= t_3 t_4 \cup \overline{t_4 t_7} \cup t_7 t_8 \cup \overline{t_8 t_3}$ is a $4$-gon in $G^\ast$. Besides $A, A'$, there must be at  least four $3$-gons in $G^\ast$. Since each of which has 
one of $t_4 t_5, t_5 t_6, t_6 t_7$ as an edge. Then  $A'':= t_4 t_5 \cup \overline{t_5 t_6} \cup t_6 t_7 \cup \overline{t_7 t_4}$ is a $4$-gon in $G^\ast$.  Then $t_5 t_6$ is an edge of a $2$-gon and we have $F_4= 3$. This contradicts to $F_4 \geq 5$.  

(ii) Suppose that $A$ is a $6$-gon.  Since $F_4 \geq 5$ and since each $4$-gon has one of $t_3 t_4, t_4 t_5, t_5 t_6, t_6 t_7, t_7 t_8$ as an edge,  the tiling $G^\ast$ must be one of the tilings in 
Figure~\ref{fig-m08at-4}.

\begin{figure}[ht]
\begin{center}
\resizebox{0.80
\textwidth}{!}{%
  \includegraphics{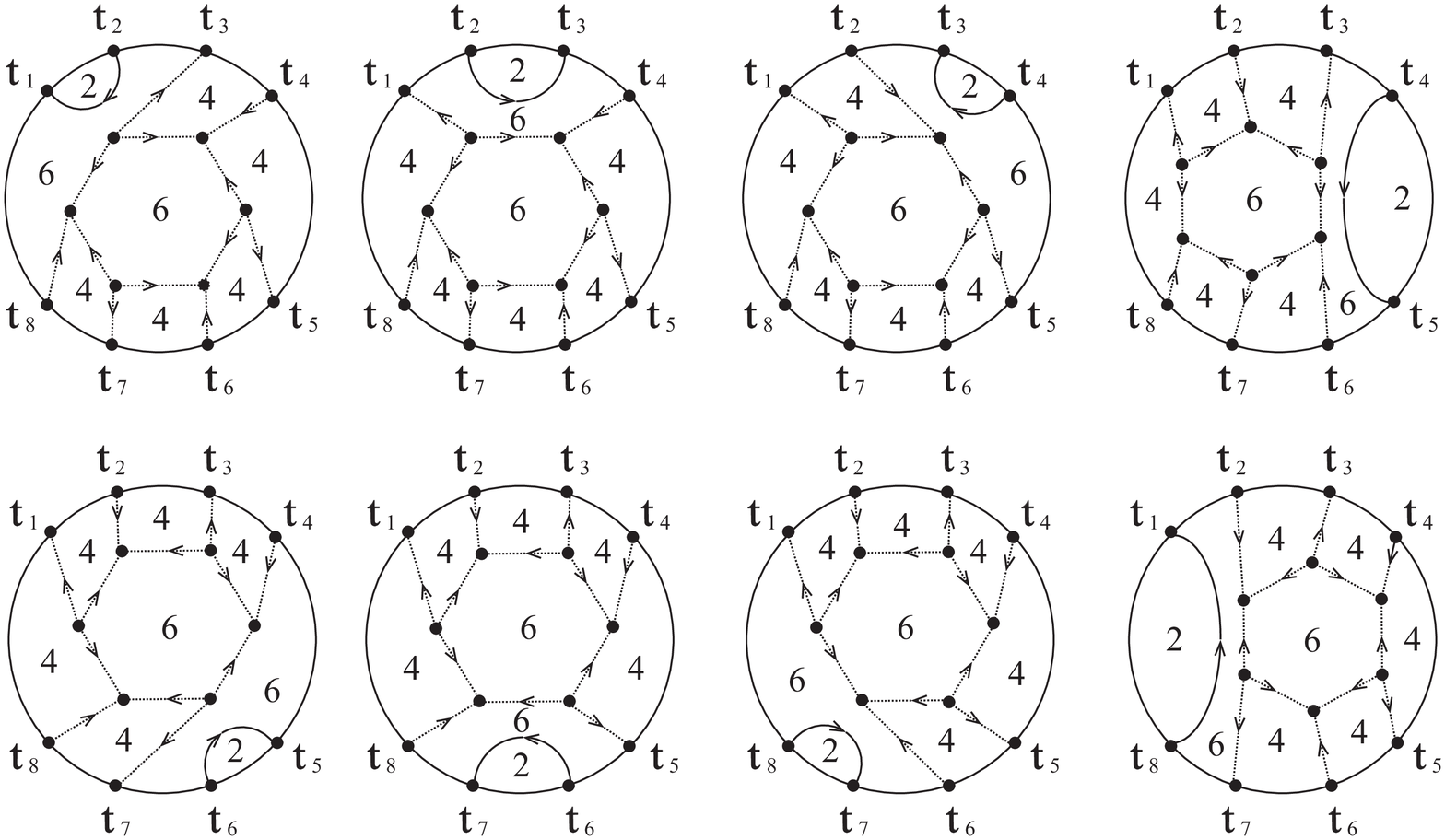}}
\caption{$F_2=1$} \label{fig-m08at-4}
\end{center}
\end{figure}

(iii) Suppose that $A$ is an $n$-gon with $n \geq 8$.  Then by (\ref{equation-4tangle}) we have $F_4 \geq 6$. 
On the other hand, since each $4$-gon has one of $t_3 t_4, t_4 t_5, t_5 t_6, t_6 t_7, t_7 t_8$ as an edge, $F_4 \leq 5$.  This is a contradiction.  Thus (iii) does not occur.  

{\bf Case V.} Suppose $F_2=0$.

(i) Suppose $F_k=0$ for all $k \geq 8$.   It follows from (\ref{equation-4tangle}) that $F_4 = 7$. 
The seven $4$-gons consecutively appear along $\partial D^2$.  There are seven edges that are edges of the seven $4$-gons and they are disjoint from  $\partial D^2$.  The seven edges are edges of the same $n$-gon for some $n \geq 8$. This contradicts the hypothesis.  

(ii) Suppose $F_k \neq 0$ for some $k \geq 8$.  It follows from (\ref{equation-4tangle}) that $F_4 \geq 8$. 
Since each $4$-gon has an edge in $\partial D^2$, the tiling $G$ is as in Figure~\ref{fig-m08at-5}. 
\end{proof}

\begin{figure}[ht]
\begin{center}
\resizebox{0.22\textwidth}{!}{%
  \includegraphics{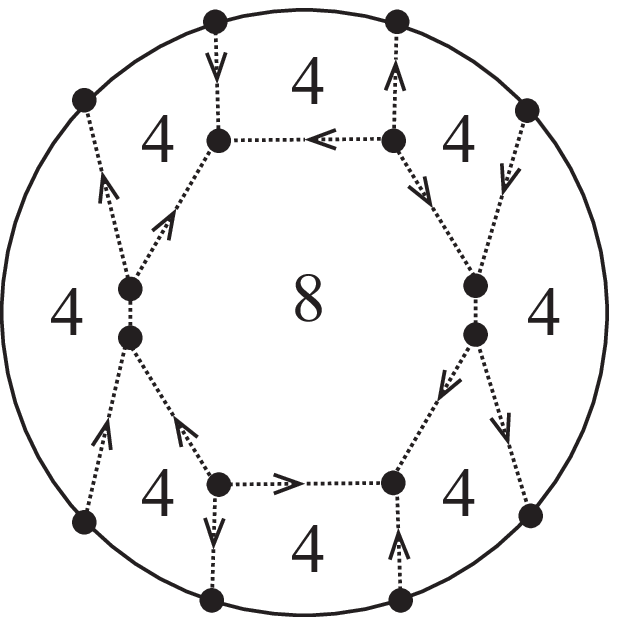}}
\caption{$F_2=0$} \label{fig-m08at-5}
\end{center}
\end{figure}

Now we are in a position to complete the proof of Lemma~\ref{4-tangle basis}.

\begin{proof}[\bf Proof of Lemma \ref{4-tangle basis}]
Let $\mathcal T$ be a $4$-tangle diagram with the boundary (a) in Figure~\ref{fig-bdary-t} such that there are no crossings, $2$-gons and $4$-gons cut that there are no connected components as diagrams in ${\rm Int}D^2$. Let $q_1, \ldots, q_8$ be the end points of $\mathcal T$ and let $\phi:I^2 \rightarrow D^2$ be a homeomorphism from $I^2$ onto a $2$-disk $D^2$ with $\phi(q_i)=t_i (1\leq i\leq 8)$. See Figure~\ref{fig-4tm08}. 
Putting $G= \phi(\mathcal T)$, we obtain the result from Lemma~\ref{8-move-tangle-lem-1}. 
\end{proof}

\begin{figure}[ht]
\begin{center}
\resizebox{0.50\textwidth}{!}{%
  \includegraphics{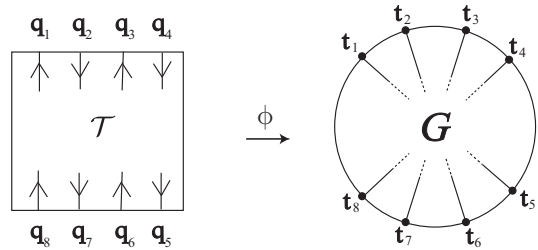}}
\caption{A homeomorphism $\phi:I^2 \rightarrow D^2$} \label{fig-4tm08}
\end{center}
\end{figure}


\section*{Acknowlegements}
The second and the third authors were supported by JSPS KAKENHI Grant Numbers 24244005 and 26287013. The fourth author was supported by Basic Science Research Program through the National Research Foundation of Korea (NRF) funded by the Ministry of Education, Science and Technology (2013R1A1A2012446).



\begin{thebibliography}{99}

\bibitem{As} M. Asada, An unknotting sequence for surface-knots
represented by ch-diagrams and their genera, \textit{Kobe J. Math.} \textbf{18}
(2001), 163--180.

\bibitem{F} P. Freyd, D. Yetter, J. Hoste, W. B. R. Lickorish, K. Millett and A. Ocneanu, A new polynomial invariant of knots and links,
Bull. Amer. Math. Soc. (N.S.)  \textbf{12} (1985),  no. 2, 239--246.

\bibitem{JKaL} Y. Joung, S. Kamada and S. Y. Lee, Applying Lipson's state models to marked graph diagrams of surface-links, \textit{J. Knot Theory Ramifications} \textbf{24} (2015), no. 10,  1540003 (18 pages).

\bibitem{JKL}
 Y. Joung, J. Kim and S. Y. Lee, Ideal coset invariants for surface-links in $\mathbb R\sp 4$, \textit{J. Knot Theory Ramifications} \textbf{22} (2013), no. 9, 1350052 (25 pages).

\bibitem{Ka2}
S. Kamada, Braid and Knot Theory in dimension Four, Mathematical Surveys and Monographs \textbf{95}, American Mathematical Society, Providence, RI, 2002. 

\bibitem{KKL} S. Kamada, J. Kim and S. Y. Lee, Computations of quandle cocycle invariants of surface-links using marked graph diagrams, \textit{J. Knot Theory Ramifications} \textbf{24} (2015), no. 10, 1540010 (35 pages).

\bibitem{Kaw}
A. Kawauchi, A survey of knot theory, Birkh\"auser Verlag, Basel, 1996.

\bibitem{KSS}
A. Kawauchi, T. Shibuya and S. Suzuki, Descriptions on surfaces
in four-space, I; Normal forms, \textit{Math. Sem. Notes Kobe Univ.}
\textbf{10} (1982), 75--125.

\bibitem{KJL1} J. Kim, Y. Joung and S. Y. Lee, On the Alexander biquandles of oriented surface-links via marked graph diagrams, \textit{J. Knot Theory Ramifications} \textbf{23} (2014), no. 7, 1460007 (26 pages).

\bibitem{KJL2} J. Kim, Y. Joung and S. Y. Lee, On generating sets of Yoshikawa moves for marked graph diagrams of surface-links, \textit{J. Knot Theory Ramifications} \textbf{24} (2015), no. 4, 1550018 (21 pages).

\bibitem{KK}
C. Kearton and V. Kurlin, All 2-dimensional links in 4-space live inside a universal 3-dimensional polyhedron, \textit{Algebraic \& Geometric Topology} \textbf{8} (2008), 1223--1247.

\bibitem{Kup} G. Kuperberg, The Quantum $G_2$ link invariant,
 \textit{International Jounal of Mathematics} \textbf{5} (1994), no. 1, 61--85. 

\bibitem{Le1} S. Y. Lee, Invariants of surface-links in $\Bbb R\sp 4$ via classical link invariants. Intelligence of low dimensional topology 2006, pp. 189--196, Ser. Knots Everything, \textbf{40}, World Sci. Publ., Hackensack, NJ, 2007.

\bibitem{Le2} S. Y. Lee, Invariants of surface-links in $\Bbb R\sp 4$ via skein relation, \textit{J. Knot Theory Ramifications} \textbf{17} (2008), no.4, 439--469.

\bibitem{Le3} S. Y. Lee, Towards invariants of surfaces in $4$-space via classical link invariants, \textit{Trans. Amer. Math. Soc.} \textbf{361} (2009), no. 1, 237--265.

\bibitem{Le4} S. Y. Lee, Polynomial invariants for surface-links from the Jones-Kauffman polynomial, Preprint.

\bibitem{Lo} S. J. Lomonaco, Jr., The homotopy groups of knots I. How to
compute the algebraic $2$-type,  \textit{Pacific J. Math.} \textbf{95} (1981), no. 2, 
349--390.

\bibitem{RT} N. Yu. Reshetikhin and V. G. Turaev,  Ribbon graphs and their invariants derived from quantum groups. \textit{Comm. Math. Phys.}  \textbf{127}  (1990),  no. 1, 1--26.

\bibitem{So} M. Soma, Surface-links with square-type ch-graphs,
Proceedings of the First Joint Japan-Mexico Meeting in Topology
(Morelia, 1999),  \textit{Topology Appl.} \textbf{121} (2002), 231--246.

\bibitem{Sw} F. J. Swenton, On a calculus for $2$-knots and surfaces in
$4$-space,  \textit{J. Knot Theory Ramifications} \textbf{10} (2001), no. 8, 1133--1141.

\bibitem{Yo} K. Yoshikawa, An enumeration of surfaces in four-space, \textit{Osaka J. Math.} \textbf{31} (1994), no. 3, 497--522.

\end{thebibliography}
\end{document}